
\documentclass[a4paper,final,oneside,notitlepage,12pt]{article}

\setcounter{secnumdepth}{2}
\setcounter{tocdepth}{2}



\usepackage{amssymb}
\usepackage{amsthm}
\usepackage{amsmath}
\usepackage[all]{xy}
\usepackage{graphicx}
\usepackage{mathrsfs}
\usepackage[bottom]{footmisc}
\usepackage[margin=2cm,font=normalsize,labelfont=bf]{caption}
\usepackage[normalem]{ulem}
\usepackage{verbatim}
\usepackage{amstext}
\usepackage{xstring}

\sloppy

\makeatletter


\gdef\@ntitle{\@title}
\def\subtitle#1{\gdef\@subtitle{#1}}
\def\@subtitle{}
\def\adress#1{\gdef\@adress{#1}}
\def\@adress{}
\def\preprint#1{\gdef\@preprint{#1}}
\def\@preprint{}
\def\keywords#1{\gdef\@keywords{#1}}
\def\@keywords{}
\def\email#1{\gdef\@email{#1}}
\def\@email{}

\def\refname{References}

\newlength{\myparskip}
\newlength{\myproofparskip}
\setlength{\myproofparskip}{0pt}
\setlength{\myparskip}{\smallskipamount}
\setlength{\parskip}{\myparskip}


\def\href#1#2{#2}

\def\kohyp{
  \usepackage{hyperref}
  \hypersetup{
    linktocpage = true,
    pdftitle = {\@title},
    pdfauthor = {\@author},
    pdfkeywords = {\@keywords},    
    bookmarksopen = true,
    bookmarksopenlevel = 1
  }}  
\def\showkeywords{\begin{flushleft}\footnotesize\textbf{Keywords}: \@keywords.\end{flushleft}}


\newcounter{mythm}[subsection]

\def\setsecnumdepth#1{
  \setcounter{secnumdepth}{#1}
  \setcounter{mythm}{0}
  \ifnum \c@secnumdepth >0
    \ifnum \c@secnumdepth >1
      \def\themythm{\thesubsection.\arabic{mythm}}
      \numberwithin{equation}{subsection}
      \renewcommand\theequation{\thesubsection.\arabic{equation}}
    \else
      \def\themythm{\thesection.\arabic{mythm}}
      \numberwithin{equation}{section}
      \renewcommand\theequation{\thesection.\arabic{equation}}
    \fi
  \else
    \def\themythm{\arabic{mythm}}
  \fi}

\newtheorem{theorem}[mythm]{Theorem}
\newtheorem{definition}[mythm]{Definition}
\newtheorem{rem}[mythm]{Remark}
\newtheorem{corollary}[mythm]{Corollary}
\newtheorem{exa}[mythm]{Example}
\newtheorem{proposition}[mythm]{Proposition}
\newtheorem{lemma}[mythm]{Lemma}

\newenvironment{remark}{\begin{rem}\normalfont\setlength{\parskip}{\myproofparskip}}{\setlength{\parskip}{\myparskip}\end{rem}}

\renewenvironment{proof}{Proof.\setlength{\parskip}{\myproofparskip}}{\hfill{$\square$}\\\setlength{\parskip}{\myparskip}}
\newenvironment{proofblank}[1]{#1.\setlength{\parskip}{\myproofparskip}}{\hfill{$\square$}\\}


\def\Z {\mathbb{Z}}

\def\R {\mathbb{R}}

\def\im{\mathrm{i}}
\def\id{\mathrm{id}}
\def\h {\mathrm{H}}
\def\babla{\blacktriangledown}
\def\trivlin{\mathbf{I}}
\def\quot#1{``#1''}

\def\tabularheight#1{\renewcommand\arraystretch{#1}}
\tabularheight{1.4}

\def\ttimes#1{\!\!\times_{#1}\!\!}

\renewcommand{\varepsilon}{\epsilon}
\renewcommand{\emph}[1]{\def\reserved@a{it}\ifx\f@shape\reserved@a\uline{#1}\else\textit{#1}\fi}

\def\bigset#1#2{\left\lbrace\;\begin{minipage}[c]{#1}\begin{center}#2\end{center}\end{minipage}\;\right\rbrace}

\newcommand\erf[1]{(\ref{#1})}

\newlength{\myl}

\newcommand{\ueins}{{\mathrm{U}}(1)}

\newcommand{\spin}[1]{{\mathrm{Spin}}(#1)}

\newcommand{\so}[1]{{\mathrm{SO}}(#1)}
\newcommand{\str}[1]{{\mathrm{String}}(#1)}

\def\triv#1{\mathcal{T}\!\!riv(#1)}
\def\trivcon#1{\mathcal{T}\!\!riv^{\nabla\!}(#1)}

\def\buntech#1#2{\mathcal{B}\hspace{-0.025cm}un^{#2}_{\!#1}}

\def\ubuncon#1{\buntech\relax\relax{\!\nabla\!}(#1)}
\def\grbtech#1{\mathcal{G}\hspace{-0.03cm}r\hspace{-0.02cm}b_{\!#1}}
\def\grb#1#2{\grbtech#1(#2)}
\def\grbcon#1#2{\grbtech#1^{\nabla\!}(#2)}
\def\ugrb#1{\grb{\,}{#1}}
\def\ugrbcon#1{\grbcon\relax{#1}}

\def\un{\mathscr{R}}

\def\tr{\mathscr{T}}

\def\ev{\mathrm{ev}}


\def\tocsection#1{\section*{#1}\addcontentsline{toc}{section}{#1}}

\def\mytitle{}
\def\zmptitle{
  \begin{tabular}{cc}
    \begin{minipage}[c]{0.4\textwidth}
      \begin{flushleft}
        \includegraphics[width=110pt]{../../tex/zmp}
      \end{flushleft}  
    \end{minipage}&
    \begin{minipage}[c]{0.55\textwidth}
      \begin{flushright}
      {\small\sf\@preprint}
      \end{flushright}
    \end{minipage}
  \end{tabular}
  \vskip 2cm}

\def\maketitle{
  \setlength{\parskip}{\myparskip}
  \newpage
  \noindent
  \mytitle
  \begin{center}
    \LARGE\@ntitle\\
    \if!\@subtitle!\else \smallskip\LARGE\@subtitle\\\fi
    \bigskip
    \if!\@author!\else   \bigskip\large\@author\\\fi
    \if!\@adress!\else   \bigskip\normalsize\@adress\\\fi
    \if!\@email!\else    \bigskip\normalsize\textit{\@email}\\\fi
  \end{center}
  \vskip 2cm\thispagestyle{empty}}


\def\kobiburl#1{
   \IfBeginWith
     {#1}
     {http://arxiv.org/abs/}
     {
        \kobibarxiv{#1}
     }
     {\kobiblink{#1}}}
\def\kobibarxiv#1{\href{#1}{\texttt{[arxiv:\StrGobbleLeft{#1}{21}]}}}
\def\kobiblink#1{Available as: \href{#1}{\texttt{#1}}}

\def\etalchar{$^{+}$}

\def\kobib#1{
  \begin{raggedright}

  \end{raggedright}}


\def\showcomments{ -- Comments suppressed}

\newif\if@fewtab\@fewtabtrue{
  \count255=\time\divide\count255 by 60
  \xdef\hourmin{\number\count255}
  \multiply\count255 by-60\advance\count255 by\time
  \xdef\hourmin{\hourmin:\ifnum\count255<10 0\fi\the\count255}}
\def\ps@draft{
  \let\@mkboth\@gobbletwo
  \def\@oddfoot{
    \hbox to 7 cm{\tiny \versionno\hfil}
    \hskip -7cm\hfil\rm\thepage\hfil{\tiny\draftdate}}
  \def\@oddhead{}
  \def\@evenhead{}
  \let\@evenfoot\@oddfoot}
\def\draftdate{\number\month/\number\day/\number\year\ \ \ \hourmin }
\newcommand\version[1]{
  \typeout{}\typeout{#1}\typeout{}
  \vskip-1.7cm \centerline{\fbox{{\normalsize\tt DRAFT -- #1 -- 
  \draftdate\showcomments}}} \vskip0.92cm}
\def\draft#1{
  \def\versionno{#1}
  \pagestyle{draft}\thispagestyle{draft}
  \gdef\@ntitle{\version\versionno \@title}
  \global\def\draftcontrol{1}}
\global\def\draftcontrol{0}

\makeatother


\newcommand{\alxydim}[2]{\begin{aligned}\xymatrix#1{#2}\end{aligned}}
\renewcommand{\to}{\!\xymatrix@C=0.5cm{\ar[r] &}}
\renewcommand{\mapsto}{\!\xymatrix@C=0.5cm{\ar@{|->}[r] &}\!}
\renewcommand{\Rightarrow}{\!\xymatrix@C=0.5cm{\ar@{=>}[r] &}\!}
\newcommand{\incl}{\!\xymatrix@C=0.5cm{\ar@{^(->}[r] &}\!}

\renewcommand\Leftrightarrow{\!\xymatrix@C=0.5cm{\ar@{<=>}[r] &}\!}

\usepackage[latin1]{inputenc}
\usepackage[english]{babel}

\def\quot#1{``#1''}

\hyphenation{
axiom
Berkeley
bi-jec-tion
bi-mo-du-le
bundle
ca-te-go-ri-fied
ca-no-ni-cal
ca-no-ni-cal-ly
co-boun-da-ry
col-la-bo-ra-tion
co-cy-cle
co-ho-mo-lo-gi-cal
com-mu-ta-ti-vi-ty
cor-res-pon-den-ce
cur-va-tu-res
de-fi-ni-tion
dif-feo-lo-gi-cal
dif-feo-lo-gy
Dijkgraaf
equi-va-lent
equi-va-ri-ant
equi-va-len-ces
ex-ten-sions
geo-me-tri-cal
geo-me-tric
ge-ne-ral
ger-bes 
ger-be
ho-mo-mor-phism
ho-lo-no-my
ho-mo-lo-gy
in-ver-ti-ble
iso-mor-phism
ma-ni-fold
mo-noidal
mo-dels
mul-ti-pli-ca-ti-ve
or-ga-ni-zers
ori-gi-nal
pa-ral-lel
po-ly-no-mi-als
pre-print
Pro-po-si-tion
pseu-do-na-tu-ral
Teichner
theo-rem
tri-vi-ali-za-tion
}

\title{String Connections and Chern-Simons Theory}
\author{Konrad Waldorf}
\adress{Department of Mathematics\\University of California, Berkeley\\970 Evans Hall \#3840\\Berkeley, CA 94720, USA}
\email{waldorf@math.berkeley.edu}
\keywords{geometric string structure, string connection, 2-gerbe, Chern- Simons theory, extended topological field theory}

\kohyp
\usepackage{amstext}
\usepackage{amssymb}

\def\nablaa{\nabla_{\!\!A}}
\def\hc#1#2{\mathrm{h}_{#1}#2}

\begin{document}


\maketitle

\begin{abstract}
We present a finite-dimensional and smooth formulation of  string structures on  spin bundles. It uses trivializations of the Chern-Simons 2-gerbe  associated to this bundle. Our formulation is particularly suitable to deal with string connections: it enables us to prove that every string structure admits a  string connection and that the possible choices form an affine space. Further we discover  a new  relation between string connections, 3-forms  on the base manifold, and degree three differential cohomology. We also  discuss in detail the relation between our formulation of string connections and the original version of Stolz and Teichner.  
\showkeywords\begin{flushleft}\footnotesize\textbf{MSC classification}: primary 53C08, secondary 57R56, 57R15.\end{flushleft}
\end{abstract}

\tableofcontents

\tocsection{Introduction}

This article is concerned with a smooth manifold $M$, a principal spin bundle $P$ over $M$, and an additional structure on $P$ called \emph{string structure}. From a purely topological point of view, a string structure on $P$ is a certain  cohomology class $\xi\in\h^3(P,\Z)$.

We show that a string structure on  $P$ can equivalently  be understood as  a trivialization of a geometrical object over $M$, a bundle 2-gerbe $\mathbb{CS}_P$. This bundle 2-gerbe comes associated with $P$ and plays an important role in classical Chern-Simons theory. Our aim is to use this new formulation of string structures as a  suitable setup to study \emph{string connections}.

A string connection is an additional structure for a connection $A$ on $P$ and a string structure on $P$. The combination of a string structure and a string connection is  called a \emph{geometric string structure} on $(P,A)$. We give a new definition of a string connection in terms of connections on bundle 2-gerbes, and discuss in which sense it is equivalent to the original concept introduced by Stolz and Teichner \cite{stolz1}. Then, we provide the following list of new results on string connections:
\begin{itemize}
\item 
Geometric string structures form a 2-category. The set of isomorphism classes of geometric string structures is parameterized by  degree three differential cohomology of $M$. 
\item
Every string connection defines a  3-form on $M$, whose pullback to $P$ differs from the Chern-Simons 3-form associated to $A$ by a closed 3-form with integral periods.

\item
For every string structure on $P$ and every connection $A$ there exists a string connection. The set of possible string connections is an affine space. 
\end{itemize}

Back in the eighties, string structures emerged from  two-dimensional supersymmetric  field theories in work by Killingback \cite{killingback1} and Witten \cite{witten2}, and several definitions have been proposed so far. Firstly, McLaughlin \cite{mclaughlin1} defines a string structure on $P$ as a lift of the structure group of the \quot{looped} bundle $LP$ over $LM$ from $L\spin n$ to its universal central extension. 
Secondly,  Stolz and Teichner define a string structure on $P$ as a lift of the structure group of $P$ from $\spin n$ to a certain three-connected  extension, the  \emph{string group}. This group is \emph{not} a Lie group, but can be realized either as an infinite-dimensional, strict Fréchet Lie 2-group \cite{baez9}, or as a finite-dimensional, non-strict Lie 2-group \cite{pries2}. Thirdly, a string structure can be understood as a class $\xi\in\h^3(P,\Z)$, as mentioned in the beginning. This approach is  equivalent to the Stolz-Teichner definition, and one can show that string structures in that sense exist if and only if a certain class $\frac{1}{2}p_1(P) \in \h^4(M,\Z)$ vanishes.

Our formulation of string structures is based on the idea of Brylinski and McLaughlin to realize the obstruction class $\frac{1}{2}p_1(P)$ in a geometrical way \cite{brylinski7,brylinski4,brylinski5}. The bundle 2-gerbe $\mathbb{CS}_P$ we use here is such a geometrical realization, and goes back to work of Carey, Johnson, Murray and Stevenson \cite{carey4}. In this paper we go one step further: by considering string structures as  \emph{trivializations} of the bundle 2-gerbe $\mathbb{CS}_P$, we  also geometrically realize \emph{in which way} the obstruction may vanish.

The main innovation of our new formulation of a string structure is that it remains in the category of finite-dimensional, smooth manifolds, in contrast to the formulations of McLaughlin and Stolz-Teichner. This becomes possible by feeding    the \emph{basic gerbe} of Gaw\c edzki-Reis \cite{gawedzki1} and Meinrenken \cite{meinrenken1} into the construction of the bundle 2-gerbe $\mathbb{CS}_P$.  A second novelty is that our formulation exhibits string structures as objects in a 2-category rather than in a category: we thus uncover a further layer of structure.
The more recent formalism of Schommer-Pries \cite{pries2} has the same advantages, but has so far not been extended to string connections.

String \emph{connections} on top of string structures have been introduced by Stolz and Teichner as \quot{trivializations} of a certain version of classical Chern-Simons theory \cite{stolz1}. The definition we give here is again purely finite-dimensional and smooth:
it can be expressed as a couple of real-valued differential forms of degrees one and two.
Our concept can  systematically be understood as an application of the theory of connections on bundle 2-gerbes. In the proofs of our results we use and extend this existing theory.

Our results on string connections make progress in three aspects. Firstly, the action of degree three differential cohomology on geometric string structures that we have found generalizes the well-known action of ordinary degree three cohomology on string structures to  a setup \emph{with} connections. Secondly, we provide a new explanation for the appearance of 3-forms on $M$ in the presence of string structures. Such 3-forms have appeared before in work of Redden about string manifolds \cite{redden1}. We have now identified 3-forms as a genuine feature of a string connection. Thirdly, our results about the  existence of string connections and the structure  of the  space of string connections verify a conjecture of Stolz and Teichner \cite{stolz1}.

One thing one could further investigate  is the role of string connections in two-dimensional supersymmetric non-linear sigma models. In such models, string connections are supposed to provide for well-defined Feynman amplitudes \cite{alvarez3,freed5}. Further, sigma model actions can contain  Wess-Zumino terms; these are best understood in terms of gerbes with connection \cite{gawedzki3}. It would be interesting to better understand the interplay between string connections and Wess-Zumino terms, given  that both structures are now available in a formulation with gerbes.

This article is organized in the following way. In the next section we present a detailed overview on our results. Sections \ref{2gerbes} and \ref{con2gerbes} provide  full definitions and statements, and sketch the proofs. Section \ref{trivcs} is devoted to the comparison of our concepts with those of Stolz and Teichner. In Section \ref{background} we provide some background in bundle gerbe theory. Remaining  technical lemmata are collected in Section \ref{tech}.

\paragraph{Acknowledgements.} I gratefully acknowledge support by a Feodor-Lynen scholarship, granted by the Alex\-an\-der von Hum\-boldt Foundation. I thank Martin Olbermann, Arturo Prat-Waldron, Urs Schreiber and Peter Teichner for  exciting discussions, and two referees for their helpful comments and suggestions.
\setsecnumdepth{2}

\section{Summary of the Results}

We explain in Section \ref{sumstring} how string structures can be understood as trivializations of the Chern-Simons 2-gerbe. In Section \ref{sumstringcon} we upgrade to a setup with connections, and explain the relation between string connections and Chern-Simons theory. In Section \ref{resstringcon} we present our results on string connections.

\subsection{String Structures as Trivializations}

\label{sumstring}

Let us start with reviewing the  notion of a string structure on the basis of Stolz and Teichner \cite{stolz1} and Redden \cite{redden1}. Throughout this article $M$ is a smooth manifold and $n$ is an integer, $n=3$ or $n>4$. Note that  $\spin n$ is simple, connected, simply-connected and  compact. In particular, there is a standard generator of $\h^3(\spin n,\Z) \cong \Z$. 

Stolz and Teichner \cite{stolz1} define a string structure on a principal $\spin n$-bundle $P$ as a lift of the structure group of $P$ from $\spin n$ to a certain  extension
\begin{equation*}
\alxydim{}{B\ueins \ar[r] & \str n \ar[r] & \spin n\text{,}}
\end{equation*}
the \emph{string group}.
In this article we are not going to use the string group directly, since it is not a finite-dimensional Lie group, and our aim is to work in the finite-dimensional setting. Instead, we use the following simple replacement:
\begin{definition}[{\cite[Definition 6.4.2]{redden1}}]
\label{stringstructure}
Let $P$ be a principal $\spin n$-bundle over $M$. A \emph{string class} on $P$ is a class $\xi\in \h^3(P,\Z)$, such that for every point $p\in P$  the associated inclusion 
\begin{equation*} 
\iota_p: \spin n \to P: g \mapsto p.g
\end{equation*}
pulls $\xi$ back to the  standard generator of $\h^3(\spin n,\Z)$. 
\end{definition}

The string classes of  Definition \ref{stringstructure} are in bijection to equivalence classes of Stolz-Teichner string structures \cite[Proposition 6.4.3]{redden1}.
Basically, this bijection is obtained by considering a lift of the structure group of $P$ as a $B\ueins$-bundle over $P$. Such bundles have  Dixmier-Douady classes $\xi\in\h^3(P,\Z)$, and properties of  $\str n$ imply that these are string classes.

In the following, we give a new definition of a string structure that is equivalent to Definition \ref{stringstructure}, and hence  also equivalent to the definition of Stolz and Teichner. Before, we recall an important classification result for string structures. One can ask whether a principal $\spin n$-bundle $P$ admits string structures, and if it does, how many.
Associated to $P$ is a class in $\h^4(M,\Z)$ which is, when multiplied by two, the first Pontryagin class of the underlying $\so n$-bundle. Therefore it is denoted by $\frac{1}{2}p_1(P)$. In terms of string classes, the classification result is as follows.

\begin{theorem}[{\cite[Section 5]{stolz1}}]
\label{stringclassi}
Let $\pi:P \to M$ be a principal $\spin n$-bundle over $M$.
\begin{enumerate}
\item[(a)]
$P$ admits  string classes if and only if $\frac{1}{2}p_1(P)=0$. 

\item[(b)]
If $P$ admits string classes, the possible choices form a torsor over the group $\h^3(M,\Z)$, where the action of $\eta\in \h^3(M,\Z)$ takes a string class $\xi$ to the string class $\xi + \pi^{*}\eta$. 
\end{enumerate}
\end{theorem}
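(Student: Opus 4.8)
The plan is to derive both statements from the integral Leray--Serre spectral sequence of the fibre bundle $\pi \colon P \longrightarrow M$ with fibre $\spin n$, whose second page is $E_2^{p,q} = \h^p(M, \h^q(\spin n, \Z))$ and which converges to $\h^{p+q}(P, \Z)$. First I would record the low-degree cohomology of the fibre: since $\spin n$ is connected, simply-connected and (for $n > 4$) simple, one has $\h^0(\spin n, \Z) = \Z$, $\h^1(\spin n, \Z) = \h^2(\spin n, \Z) = 0$ and $\h^3(\spin n, \Z) \cong \Z$ with a preferred generator. As $\spin n$ is connected the coefficient system is trivial, so in total degree at most four the $E_2$-page is supported on the two rows $q = 0$ and $q = 3$, with $E_2^{p,0} = \h^p(M, \Z)$ and $E_2^{0,3} = \Z$.

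The vanishing of the rows $q = 1$ and $q = 2$ forces $E_2 = E_3 = E_4$ in this range, so the only differential that can act in total degree three is the transgression
\[
d_4 \colon E_4^{0,3} = \h^3(\spin n, \Z) \longrightarrow E_4^{4,0} = \h^4(M, \Z).
\]
The crucial geometric input -- and the step I expect to be the main obstacle -- is to identify this transgression: on the preferred generator of $\h^3(\spin n, \Z)$ it takes the value $\tfrac 12 p_1(P)$. I would obtain this by naturality from the universal bundle $E\spin n \longrightarrow B\spin n$, where the transgression sends the generator of $\h^3(\spin n, \Z)$ to the generator of $\h^4(B\spin n, \Z) \cong \Z$, whose pullback along a classifying map of $P$ is by definition $\tfrac 12 p_1(P)$.

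Granting this, part (a) is immediate. By Definition \ref{stringstructure} a string structure is a class in $\h^3(P, \Z)$ restricting to the generator of $\h^3(\spin n, \Z)$ on each fibre, which -- the coefficient system being trivial -- means that its image under the edge homomorphism $\h^3(P, \Z) \longrightarrow E_\infty^{0,3} \subseteq \h^3(\spin n, \Z)$ is the generator. This edge map is surjective onto $E_\infty^{0,3} = \ker(d_4)$, so such a class exists if and only if the generator lies in $\ker(d_4)$, that is, if and only if $\tfrac 12 p_1(P) = 0$.

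For part (b) I would exploit the filtration of $\h^3(P, \Z)$. The kernel of the fibre restriction $\h^3(P, \Z) \longrightarrow \h^3(\spin n, \Z)$ is its lowest filtration step, and since $E_\infty^{1,2} = E_\infty^{2,1} = 0$ this step is $E_\infty^{3,0}$; moreover no differential can reach the spot $(3,0)$, as the relevant source groups $E_2^{1,1}$ and $E_2^{0,2}$ vanish, so $E_\infty^{3,0} = \h^3(M, \Z)$ and the kernel equals $\pi^{*}\h^3(M, \Z)$ with $\pi^{*}$ injective. Hence any two string structures $\xi, \xi'$ differ by a unique class $\pi^{*}\eta$ with $\eta \in \h^3(M, \Z)$; conversely $\xi + \pi^{*}\eta$ is again a string structure since $\iota_p^{*}\pi^{*} = (\pi \circ \iota_p)^{*} = 0$ leaves the fibrewise restriction unchanged. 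This displays the string structures as a torsor over $\h^3(M, \Z)$ under the stated action, completing the argument.
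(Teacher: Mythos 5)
Your proof is correct, but note that the paper itself offers no proof of this theorem: it is quoted from Stolz--Teichner (\cite{stolz1}, Sec.\ 5), where the statement is established in a different language, namely as an obstruction-theoretic classification of lifts of the structure group of $P$ along the extension $B\ueins \to \str n \to \spin n$; the translation of that result into the cohomological Definition \ref{stringstructure} used here is then delegated to Redden (\cite{redden1}, Prop.\ 6.4.3). Your argument is a genuinely different, self-contained route: you prove the theorem directly for Definition \ref{stringstructure} via the Leray--Serre spectral sequence of $\pi:P\to M$, using that the rows $q=1,2$ vanish, that the fibre restriction is the edge map onto $E_\infty^{0,3}=\ker d_4$, and that $F^1=E_\infty^{3,0}=\pi^{*}\h^3(M,\Z)$ with $\pi^{*}$ injective. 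The key input you correctly isolate -- that the transgression $d_4$ sends the preferred generator of $\h^3(\spin n,\Z)$ to $\frac{1}{2}p_1(P)$, obtained by naturality from the universal bundle -- is exactly McLaughlin's Lemma 2.2, which the paper quotes elsewhere (Section \ref{cstwogerbe}) for a different purpose. What your approach buys is that it avoids the string group entirely and proves the classification for the definition the paper actually works with; what the Stolz--Teichner route buys is the identification of string structures with actual geometric lifts, which is what motivates the definition in the first place. Two small points you elide: one should assume $M$ connected (or argue component by component) so that the fibrewise condition at every $p$ reduces to the condition at one basepoint fibre and so that $\ker(\iota_p^{*})=F^1$; and the transgression identification only holds up to sign, which is harmless for (a) (vanishing is sign-independent) and irrelevant for (b).
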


Next we report the first results of this article. It is well-known that the group $\h^4(M,\Z)$ classifies geometrical objects over $M$ called \emph{bundle 2-gerbes} \cite{stevenson2}. In this article, bundle 2-gerbes are defined internally to the category of smooth, finite-dimensional  manifolds (see Definition \ref{twogerbe}). The before-mentioned classification result for bundle 2-gerbes implies that there exists an isomorphism class of bundle 2-gerbes over $M$ with  characteristic class equal to $\frac{1}{2}p_1(P)$. Our first result is the observation that this isomorphism class has a distinguished representative: the Chern-Simons 2-gerbe $\mathbb{CS}_P$.

\begin{theorem}
\label{csexist}
Let $P$ be a principal $\spin n$-bundle over $M$. Then, the Chern-Simons 2-gerbe $\mathbb{CS}_P$ has  characteristic class  $\frac{1}{2}p_1(M)$. \end{theorem} 

We explain the construction of the Chern-Simons 2-gerbe $\mathbb{CS}_P$ in Section \ref{cstwogerbe} following Carey et al. \cite{carey4}: its first step is to  pull back the \emph{basic gerbe} $\mathcal{G}$ over $\spin n$ along the map $P \times_M P \to \spin n$. The characteristic class of $\mathbb{CS}_P$ is calculated by combining a result of McLaughlin (\cite[Lemma 2.2]{mclaughlin1}, recalled below as Lemma \ref{cccs}) with a general result about Chern-Simons 2-gerbes (Lemma \ref{lem:mcl}). A different method carried out in \cite[Proposition 9.3]{stevenson2} (with an infinite-dimensional version of the Chern-Simons 2-gerbe) is to extract an explicit \v Cech 4-cocycle, and noticing that it is the same as found by Brylinski and McLaughlin in the world of sheaves of 2-groupoids \cite{brylinski7}. 

In order to detect whether or not $\frac{1}{2}p_1(P)$ vanishes, we   use a geometrical criterion for the vanishing of the characteristic class of a bundle 2-gerbe, called  \emph{trivialization}. In Section \ref{trivstring} we will see that trivializations of a bundle 2-gerbe $\mathbb{G}$ form a 2-groupoid $\triv{\mathbb{G}}$ (Lemma \ref{trivgroupoid}). Here we just mention that a trivialization $\mathbb{T}$ of $\mathbb{CS}_P$ determines a class $\xi_{\mathbb{T}} \in \h^3(P,\Z)$; see Proposition \ref{stringstructureaustriv}. Our second result is the following theorem.

\begin{theorem}
\label{class}
The bundle $P$ admits string classes if and only if the Chern-Simons 2-gerbe $\mathbb{CS}_P$ has a trivialization. In that case, the assignment $\mathbb{T}\mapsto \xi_{\mathbb{T}}$ establishes a bijection
\begin{equation*}
\bigset{4.2cm}{Isomorphism classes of trivializations of $\mathbb{CS}_P$} \cong \bigset{3.5cm}{String classes on $P$}\text{.}
\end{equation*}
\end{theorem}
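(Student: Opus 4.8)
My plan is to deduce the theorem from three ingredients: Theorem~\ref{stringclassi}, which supplies both the existence criterion for string structures and their $\h^3(M,\Z)$-torsor structure; Theorem~\ref{csexist}, which identifies the characteristic class of $\mathbb{CS}_P$ with $\tfrac{1}{2}p_1(P)$; and two standard facts from the theory of bundle 2-gerbes, namely that a bundle 2-gerbe admits a trivialization if and only if its characteristic class in $\h^4(M,\Z)$ vanishes, and that the isomorphism classes in $\triv{\mathbb{CS}_P}$ form a torsor over $\h^3(M,\Z)$ once they are nonempty, with $\eta\in\h^3(M,\Z)$ acting by tensoring the underlying bundle gerbe over $P$ with the pullback of a bundle gerbe over $M$ of Dixmier--Douady class $\eta$. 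The existence equivalence then drops out at once: the characteristic class of $\mathbb{CS}_P$ is $\tfrac{1}{2}p_1(P)$, so $\mathbb{CS}_P$ is trivializable exactly when $\tfrac{1}{2}p_1(P)=0$, which by Theorem~\ref{stringclassi}(a) is precisely when $P$ admits string structures.

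To build the bijection I would first produce a canonical map from trivializations to classes in $\h^3(P,\Z)$. Recall that $\mathbb{CS}_P$ is constructed on the surjective submersion $\pi\colon P\to M$ from the basic gerbe $\mathcal{G}$ over $\spin n$, pulled back along the difference map $\delta\colon P\times_M P\to\spin n$ characterized by $p_2=p_1.\delta(p_1,p_2)$. A trivialization thus consists, among its data, of a bundle gerbe $\mathbb{S}$ over the total space $P$ together with an isomorphism $\delta^{*}\mathcal{G}\cong\pi_2^{*}\mathbb{S}\otimes\pi_1^{*}\mathbb{S}^{*}$ over $P\times_M P$. I then set $\xi:=\mathrm{DD}(\mathbb{S})\in\h^3(P,\Z)$, the Dixmier--Douady class of this bundle gerbe.

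The crux of the argument is to verify that $\xi$ is a string structure in the sense of Definition~\ref{stringstructure}. Fixing $p\in P$, I would restrict the defining isomorphism along the section $s_p\colon\spin n\to P\times_M P$, $g\mapsto(p,p.g)$. Since $\pi_1\circ s_p$ is the constant map at $p$, $\pi_2\circ s_p=\iota_p$, and $\delta\circ s_p=\id_{\spin n}$, the pullback $s_p^{*}$ of the isomorphism yields $\mathcal{G}\cong\iota_p^{*}\mathbb{S}$, and hence $\iota_p^{*}\xi=\mathrm{DD}(\mathcal{G})$, the preferred generator of $\h^3(\spin n,\Z)$. This is exactly the fiber condition of Definition~\ref{stringstructure}, so $\xi$ is a string structure. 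I expect this step to be the \textbf{main obstacle}: it requires unwinding the explicit construction of $\mathbb{CS}_P$ and the precise content of a trivialization, and above all a careful bookkeeping of duality and orientation conventions so that the restricted class comes out as the preferred generator rather than its negative.

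It remains to see that $\xi$ depends only on the isomorphism class of the trivialization and that the resulting map of sets is bijective. A 1-morphism of trivializations restricts to an isomorphism of the underlying bundle gerbes over $P$, so it preserves their Dixmier--Douady class; hence the assignment descends to $\triv{\mathbb{CS}_P}$. Equivariance is a direct computation: replacing $\mathbb{S}$ by $\mathbb{S}\otimes\pi^{*}\mathcal{J}$ for a bundle gerbe $\mathcal{J}$ over $M$ with $\mathrm{DD}(\mathcal{J})=\eta$ sends $\xi$ to $\xi+\pi^{*}\eta$, which matches the $\h^3(M,\Z)$-action on string structures in Theorem~\ref{stringclassi}(b). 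Since both sides are torsors over $\h^3(M,\Z)$ whenever they are nonempty, an equivariant map between them is automatically a bijection, and this establishes the claimed canonical bijection.
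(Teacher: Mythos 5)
Your proposal is correct and follows essentially the same route as the paper: the existence equivalence via Stevenson's vanishing criterion for the characteristic class (Lemma~\ref{trivvanish}) together with Theorems~\ref{csexist} and~\ref{stringclassi}(a), and the bijection via the Dixmier--Douady class of the trivializing gerbe, checked to be a string structure by restricting along the section $s_p$ (Proposition~\ref{stringstructureaustriv}) and then shown bijective as an equivariant map of $\h^3(M,\Z)$-torsors (Proposition~\ref{stringbijection}). The only difference is presentational: what you invoke as a ``standard fact'' about the torsor structure on isomorphism classes of trivializations is something the paper actually proves (Lemma~\ref{trivclass}, via a descent argument in Section~\ref{bundlegerbesact}).
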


The first part follows directly from a general result of Stevenson \cite{stevenson2}, see Lemma \ref{trivvanish}. The second part is Proposition \ref{stringbijection} in Section \ref{trivstring}: there we observe that both sides are torsors over the group $\h^3(M,\Z)$, and prove that $\mathbb{T}\mapsto \xi_{\mathbb{T}}$ is equivariant.

Theorem \ref{class} motivates the following new definition of a string structure:

\begin{definition}
\label{stringstructurenew}
Let $P$ be a principal $\spin n$-bundle over $M$. A \emph{string structure} on $P$ is a trivialization of the Chern-Simons 2-gerbe $\mathbb{CS}_P$.
\end{definition}

As a direct consequence of this definition, string structures are not just a set (like string classes), or a category (as in \cite{stolz1}):

\begin{corollary}
\label{string2cat}
String structures on a principal $\spin n$-bundle $P$ over $M$  in the sense of Definition \ref{stringstructurenew} form a 2-groupoid $\triv {\mathbb{CS}_P}$.
\end{corollary}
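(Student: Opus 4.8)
The plan is to read the corollary as the formal consequence of the identification adopted in the paragraph immediately preceding it, where a string structure on $P$ is \emph{defined} to be a trivialization of the Chern-Simons 2-gerbe $\mathbb{CS}_P$. Under this convention the content of the statement is twofold: first, that the collection of all such trivializations carries the structure of a 2-groupoid; and second, that this 2-groupoid is compatible with the purely set-theoretic notion of string structure from Definition \ref{stringstructure}. I therefore do not expect to prove anything genuinely new here, but rather to assemble two facts already available.

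First I would invoke the general principle recorded in the text just before Theorem \ref{class}: the trivializations of any fixed bundle 2-gerbe $\mathbb{G}$ assemble into a 2-groupoid $\triv{\mathbb{G}}$, whose objects are the trivializations, whose 1-morphisms are isomorphisms of trivializations, and whose 2-morphisms are the higher isomorphisms, all invertible by construction. Applying this to $\mathbb{G}=\mathbb{CS}_P$ produces the 2-groupoid $\triv{\mathbb{CS}_P}$, whose objects are, by the adopted definition, precisely the string structures on $P$. This already yields the named 2-groupoid; the only remaining task is to confirm that it refines, rather than discards, the earlier definition.

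Next I would check this compatibility using Theorem \ref{class}: whenever $P$ admits string structures there is a canonical bijection between the set of isomorphism classes of objects of $\triv{\mathbb{CS}_P}$ and the set of string structures on $P$ in the sense of Definition \ref{stringstructure}. Consequently the set of isomorphism classes of the 2-groupoid recovers exactly the $\h^3(M,\Z)$-torsor of Theorem \ref{stringclassi}(b), so no information is lost in passing from the set to the categorified description. When $P$ carries no string structure both sides are empty, and there is nothing to reconcile.

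The substantive input is entirely the bijection of Theorem \ref{class} together with the general fact that trivializations form a 2-groupoid; the corollary itself is a bookkeeping statement naming the 2-groupoid underlying our point of view, so I anticipate no real obstacle. The one point deserving a line of care is that the 2-categorical structure placed on string structures be genuinely canonical, i.e.\ independent of the explicit model of $\mathbb{CS}_P$ built in Section \ref{cstwogerbe}. This follows because $\mathbb{CS}_P$ is canonical up to isomorphism by Theorem \ref{csexist}, and any isomorphism of bundle 2-gerbes induces an equivalence between the associated 2-groupoids of trivializations, so $\triv{\mathbb{CS}_P}$ is well defined up to canonical equivalence.
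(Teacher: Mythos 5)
Your proposal is correct and follows the paper's own route: the paper derives Corollary \ref{string2cat} directly from Lemma \ref{trivgroupoid} (trivializations of any bundle 2-gerbe form a 2-groupoid, constructed explicitly in Section \ref{triv2cat}) applied to $\mathbb{G}=\mathbb{CS}_P$, under the adopted identification of string structures with trivializations, exactly as you do. Your added remarks on compatibility with Definition \ref{stringstructure} via Theorem \ref{class} and on independence of the model of $\mathbb{CS}_P$ are harmless refinements of the same argument.
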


We remark that the notions of string structures introduced later by Schommer-Pries \cite{pries2} and Fiorenza, Schreiber and Stasheff \cite{Fiorenza} have the same feature.

\subsection{String Connections as Connections on Trivializations}

\label{sumstringcon}

The main objective of this article is to show that Definition \ref{stringstructurenew} --- understanding a string structure as a trivialization of the Chern-Simons 2-gerbe --- has many advantages, in particular when additional, differential-geometric structures become involved.

An example for such a differential-geometric structure is a \emph{string connection}. Our first goal is to give a  thorough definition of a string connection. We will use existing concepts and results on connections on bundle 2-gerbes. Basically, a connection on a bundle 2-gerbe is the same type of additional structure an ordinary connection is for an ordinary principal bundle. For example, every connection on a bundle 2-gerbe has a curvature, which   is a closed 4-form on $M$ that represents the characteristic class of the underlying bundle 2-gerbe in the real cohomology of $M$.  

\begin{theorem}
\label{cancon}
Any connection $A$ on a principal $\spin n$-bundle $P$ over $M$ determines a connection $\nablaa$ on the Chern-Simons 2-gerbe $\mathbb{CS}_P$. The  curvature of $\nablaa$ is one-half of the Pontryagin 4-form associated to $A$. 
\end{theorem}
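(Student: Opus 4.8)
The plan is to build $\nablaa$ out of the three pieces of geometry that already enter the construction of $\mathbb{CS}_P$ in Section \ref{cstwogerbe}, and to read off its curvature from the defining descent equations of a connection on a bundle 2-gerbe. Recall that $\mathbb{CS}_P$ is assembled from the surjective submersion $\pi\colon P \to M$, the difference map $g\colon P^{[2]} \to \spin n$ determined by $p_2 = p_1.g(p_1,p_2)$, the pullback $g^{*}\mathcal{G}$ of the basic gerbe $\mathcal{G}$ over $\spin n$, and the multiplicative structure of $\mathcal{G}$ furnishing the bundle 2-gerbe product over $P^{[3]}$. A connection on such a bundle 2-gerbe (Section \ref{con2gerbes}) consists of: a connection and curving on the gerbe $g^{*}\mathcal{G}$ over $P^{[2]}$; a compatibility of the bundle 2-gerbe product with these connections; and a 3-form $C\in\Omega^3(P)$ whose coboundary $\delta C = \mathrm{pr}_2^{*}C - \mathrm{pr}_1^{*}C$ equals the curvature 3-form of $g^{*}\mathcal{G}$. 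The first two data are $A$-independent and come purely from the geometry of $\spin n$; the role of $A$ is to supply the 3-form $C$.

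For the gerbe connection I would start from the canonical connection on the basic gerbe $\mathcal{G}$ constructed by Meinrenken \cite{meinrenken1} and Gaw\k{e}dzki--Reis \cite{gawedzki1}, whose curvature is the canonical 3-form $H = \tfrac{1}{6}\langle \theta \wedge [\theta \wedge \theta]\rangle$ on $\spin n$, and pull it back along $g$ to a connection on $g^{*}\mathcal{G}$ of curvature $g^{*}H$. For the compatibility of the product I would invoke that the basic gerbe is multiplicative \emph{with connection} in the sense of \cite{waldorf5}: its multiplicative structure admits a connection-preserving refinement, which pulls back to make the bundle 2-gerbe product compatible with the connections up to an explicit 2-form over $P^{[3]}$. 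For the curving I would take $C$ to be the Chern--Simons 3-form \cite{chern1}
\begin{equation*}
C = \mathrm{CS}(A) = \langle A \wedge F_A\rangle - \tfrac{1}{6}\langle A \wedge [A\wedge A]\rangle,
\end{equation*}
normalized so that $d\,\mathrm{CS}(A) = \langle F_A\wedge F_A\rangle$ and $\langle\,,\rangle$ is the basic invariant pairing on $\mathfrak{spin}(n)$.

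The crux is the descent condition for $C$. Over $P^{[2]}$ the pullbacks of $A$ are related by the gauge transformation $g$, namely $\mathrm{pr}_2^{*}A = \mathrm{Ad}_{g^{-1}}(\mathrm{pr}_1^{*}A) + g^{*}\theta$, and the Chern--Simons gauge-transformation formula then yields
\begin{equation*}
\mathrm{pr}_2^{*}\,\mathrm{CS}(A) - \mathrm{pr}_1^{*}\,\mathrm{CS}(A) = g^{*}H + d\rho
\end{equation*}
for an explicit 2-form $\rho$ polynomial in $\mathrm{pr}_1^{*}A$ and $g^{*}\theta$; this is consistent since both sides become $\mathrm{pr}_2^{*}\langle F_A\wedge F_A\rangle - \mathrm{pr}_1^{*}\langle F_A\wedge F_A\rangle = 0$ after $d$. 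Thus $\delta C$ equals $g^{*}H$ up to the exact term $d\rho$, and $\rho$ is exactly the freedom in the choice of curving of $g^{*}\mathcal{G}$; absorbing it makes the descent hold on the nose. I expect the \emph{main obstacle} to be not this single identity but the bookkeeping one level higher: one must check that the curving, the 2-form $\rho$, and the connection on the multiplicative structure fit together over $P^{[3]}$ and satisfy the coherence over $P^{[4]}$. This is where the hypothesis that $\mathcal{G}$ is multiplicative with connection does the real work, and the verification should reduce to identities among the canonical forms on $\spin n$ pulled back along the simplicial maps $P^{[\bullet]}\to (\spin n)^{\bullet-1}$.

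Finally, the curvature of $\nablaa$ is the unique 4-form $K\in\Omega^4(M)$ with $\pi^{*}K = dC$. Since $dC = \langle F_A\wedge F_A\rangle$ is horizontal and $\spin n$-invariant, it is basic and descends to its Chern--Weil representative on $M$; with the normalization that fixes $H$ as the generator of $\h^3(\spin n,\Z)$, the pairing $\langle\,,\rangle$ is the one whose Chern--Weil form is one half of the Pontryagin 4-form of $A$, which is therefore the curvature $K$, as claimed. Canonicity of $\nablaa$ follows because every ingredient --- the basic gerbe connection, its multiplicative refinement, and $\mathrm{CS}(A)$ --- is canonically determined by $\spin n$ and by $A$.
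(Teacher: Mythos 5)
Your proposal follows essentially the same route as the paper: the Chern--Simons 3-form $TP(A)$ as the 3-form on $P$, the Gaw\k edzki--Reis/Meinrenken connection on the basic gerbe pulled back along $g$ and shifted by the exact 2-form arising from the gauge-transformation identity $\pi_2^{*}TP(A)-\pi_1^{*}TP(A)=g^{*}H+\mathrm{d}\omega$, existence of a compatible connection on the multiplicative structure cited from \cite{waldorf5}, and the curvature read off from the descent of $\mathrm{d}TP(A)=\left\langle \Omega_A\wedge\Omega_A\right\rangle$ with the same normalization remark explaining the factor $\frac{1}{2}$. The only slight imprecision is your opening claim that the gerbe-connection data are $A$-independent (the curving must absorb the $A$-dependent 2-form $\omega$), but you correct this yourself when absorbing $\mathrm{d}\rho$ into the curving.
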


In Section \ref{sec:conncs} we give an explicit construction of the connection $\nablaa$ based on results about connections on multiplicative bundle gerbes \cite{carey4,waldorf5}. It involves the Chern-Simons 3-form $TP(A) \in \Omega^3(P)$ which is fundamental in Chern-Simons theory  \cite{chern1}; hence the name of the 2-gerbe $\mathbb{CS}_P$. The curvature of $\nablaa$ is calculated in Lemma \ref{cscurv}.

Now that we have equipped the Chern-Simons 2-gerbe $\mathbb{CS}_P$  with the connection $\nablaa$,  we  consider trivializations of $\mathbb{CS}_P$ that preserve the connection $\nablaa$ in an appropriate way. However, the term \quot{preserve} is not accurate: for a morphism between two bundle 2-gerbes  with connections, being \quot{connection-preserving} is \emph{structure}, not  property.
So we better speak of \emph{trivializations with compatible connection}. Generally, if $\mathbb{G}$ is a bundle 2-gerbe with connection $\nabla$, there is a 2-groupoid $\triv {\mathbb{G},\nabla}$ of trivializations of $G$ with connection compatible with $\nabla$. 

Accompanying Definition \ref{stringstructurenew} we propose the following definition. 
\begin{definition}
\label{stringcon}
Let $P$ be a principal $\spin n$-bundle over $M$ with connection $A$, and let $\mathbb{T}$ be a string structure on $P$. A \emph{string connection} for $(\mathbb{T},A)$ is a connection $\babla$ on $\mathbb{T}$  that is compatible with $\nablaa$. A \emph{geometric string structure} on $(P,A)$ is a pair $(\mathbb{T},\babla)$ of a string structure $\mathbb{T}$ on $P$ and a string connection $\babla$ for $(\mathbb{T},A)$.
\end{definition} 

We suppress the details behind this definition until Section \ref{contriv}. We only remark that every string connection $\babla$ determines  a closed  3-form $K_{\babla} \in \Omega_{\mathrm{cl}}^3(P)$ that represents  the string class $\xi_{\mathbb{T}}\in \h^3(P,\Z)$ corresponding to  $\mathbb{T}$ under the bijection of Theorem \ref{class}. This 3-form appears below in Theorem \ref{threeform}.

In the remainder of this subsection we  compare  Definition \ref{stringcon} with the original concept of a geometric string structure introduced by Stolz and Teichner \cite{stolz1}.
There, a \emph{geometric string structure} on a principal $\spin n$-bundle $P$ with connection $A$ is by definition a \emph{trivialization of the extended Chern-Simons theory} $Z_{P,A}$ associated to $(P,A)$ \cite[Definition 5.3.4]{stolz1}. Chern-Simons theory is viewed as a three-dimensional extended topological field theory: it assigns values $Z_{P,A}(\phi^d)$ to smooth maps $\phi^d: X^d \to M$ where $X^d$ is a $d$-dimensional  oriented manifold and $d$ varies from $0$ to $3$. These  values are supposed to be compatible with the gluing of manifolds along boundaries, and are supposed to depend in a smooth way on the map $\phi^d$. In the top dimension, $Z_{P,A}(\phi^{3})$ is an element of $\ueins$, namely the ordinary classical Chern-Simons invariant of $X^3$. According to Stolz and Teichner, a \emph{trivialization of $Z_{P,A}$} is a similar assignment that \quot{lifts} the values of $Z_{P,A}$ in a certain way; for example, it assigns to $\phi^3$ a real number whose exponential is  $Z_{P,A}(\phi^{3})$.

Our argument uses that the extended Chern-Simons theory $Z_{P,A}$ can directly be formulated in terms of the Chern-Simons 2-gerbe $\mathbb{CS}_P$ and its connection $\nablaa$ (Definition \ref{def:cs}). This formulation is close to  Freed's original definition of extended Chern-Simons theory by \emph{transgression} \cite{freed3}. The difference to the formulation  used by Stolz and Teichner is essentially a different choice of a geometrical model for $K(\Z,4)$: Stolz and Teichner use the space of torsors over the outer automorphisms of a certain von Neumann algebra, while the model that stands behind bundle 2-gerbes is the topological group $BBB\ueins$. 

In this article we  use the formulation of the Chern-Simons theory $Z_{P,A}$ in terms of the Chern-Simons 2-gerbe $\mathbb{CS}_P$, and adopt the Stolz-Teichner concept of a trivialization to this formulation (Definition \ref{def:trivcs}). Now, our  concept of a geometric string structure  matches almost literally the definition of Stolz and Teichner: it trivializes the Chern-Simons 2-gerbe $\mathbb{CS}_P$ and the connection $\nablaa$ and thereby the Chern-Simons theory $Z_{P,A}$. More precisely, in Section \ref{trivcsdet} we construct a map
\begin{equation*}
\mathscr{S}:\bigset{3.7cm}{Isomorphism classes of geometric string structures on $(P,A)$} \to
\bigset{5.4cm}{Isomorphism classes of trivializations of the extended Chern-Simons theory $Z_{P,A}$}
\end{equation*}
where the geometric string structures on the left hand side are those of Definition \ref{stringcon}, and the trivializations on the right hand side are the ones  of Stolz and Teichner.

\begin{theorem}
\label{stringtrivcs}
Let $P$ be a principal $\spin n$-bundle over  $M$ with connection $A$, and let $Z_{P,A}$ be the extended Chern-Simons theory associated to $(P,A)$. Then, the map
$\mathscr{S}$ is injective. Moreover, under the assumption that the cobordism hypothesis holds for $Z_{P,A}$, it is also surjective. 
\end{theorem}
 
In that sense, our concept of a geometric string structure coincides with the one of Stolz and Teichner. 
We prove Theorem \ref{stringtrivcs} in Section \ref{trivcs} as a combination of Lemmata \ref{lem:inj} and \ref{lem:surj}. As we will see, the injectivity of $\mathscr{S}$ relies on the \emph{smoothness} of the assignments of $Z_{P,A}$.  Surjectivity holds if we assume that both $Z_{P,A}$ and its trivializations are already determined by their value on a point $\phi^0:X^0 \to M$, which is essentially the statement of the cobordism hypothesis recently proved by Lurie \cite{lurie1}. Whether or not that assumption is true remains undecidable because some aspects of the field theory $Z_{P,A}$  are not treated sufficiently in \cite{stolz1} --- most importantly their functorality with respect to manifolds with corners.

\subsection{Results on String Connections}

\label{resstringcon}

We have argued that trivializations of the Chern-Simons 2-gerbe $\mathbb{CS}_P$ with compatible connection are an appropriate formulation for geometric string structures on a pair $(P,A)$ of a principal $\spin n$-bundle $P$ over $M$ and a connection $A$ on $P$. In this formulation, we prove the following results on string connections.

Two results are direct consequences of the general theory of  bundle 2-gerbes with connection, stated as Lemma \ref{catgeomstr} below. The first is an analog of Corollary \ref{string2cat}.

\begin{corollary}
\label{geomstrcat}
Geometric string structures on a principal $\spin n$-bundle $P$ over $M$ with connection $A$ form a 2-groupoid $\triv{\mathbb{CS}_P,\nablaa}$.
\end{corollary}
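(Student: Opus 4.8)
The plan is to deduce this corollary directly from the general theory of connections on bundle 2-gerbes, in exactly the same way that Corollary \ref{string2cat} follows from the theory of bare trivializations. The key observation is that Definition \ref{stringcon} was tailored precisely so that a geometric string structure on $(P,A)$ is, by definition, nothing but a trivialization $\mathbb{T}$ of $\mathbb{CS}_P$ together with a connection $\babla$ compatible with $\nablaa$ --- that is, an object of the 2-groupoid $\triv{\mathbb{CS}_P,\nablaa}$. So the substance of the argument lies not in this identification but in making sense of the target: establishing that trivializations-with-compatible-connection of a bundle 2-gerbe-with-connection assemble into a 2-groupoid at all.

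First I would invoke Theorem \ref{cancon}: given the connection $A$ on $P$, the Chern-Simons 2-gerbe $\mathbb{CS}_P$ (which exists by Theorem \ref{csexist}) carries its canonical connection $\nablaa$. This places us in the situation of a fixed bundle 2-gerbe $\mathbb{G}=\mathbb{CS}_P$ equipped with a connection $\nabla=\nablaa$, to which the general construction $\triv{\mathbb{G},\nabla}$ applies. Second, I would spell out this general 2-groupoid, whose existence is the real content: its objects are trivializations of $\mathbb{G}$ equipped with a compatible connection; its 1-morphisms are the connection-compatible morphisms of trivializations; and its 2-morphisms are the connection-compatible 2-morphisms between those. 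One must define the horizontal and vertical compositions, exhibit identity 1- and 2-morphisms, and verify the associativity and interchange coherences --- all of which should descend from the corresponding structure on the 2-groupoid $\triv{\mathbb{G}}$ underlying Corollary \ref{string2cat}, now carrying the extra differential-form data that the compatibility conditions constrain. Since the relevant morphisms are built from isomorphisms of bundles and gerbes, they are invertible up to higher isomorphism and the 2-morphisms are genuinely invertible; this is what upgrades the 2-category to a 2-groupoid.

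Finally, matching Definition \ref{stringcon} against the objects just described identifies geometric string structures on $(P,A)$ with the objects of $\triv{\mathbb{CS}_P,\nablaa}$, and the corollary follows by specialization. The main obstacle is the general theory underlying $\triv{\mathbb{G},\nabla}$ rather than the bookkeeping of this last step: as emphasized just before Definition \ref{stringcon}, compatibility of a morphism with $\nabla$ is \emph{structure}, not a property, so one has to check that imposing it still yields well-defined, associative compositions and invertible $(2)$-morphisms. I expect this coherence verification, carried out once and for all at the level of general bundle 2-gerbes with connection in Section \ref{contriv}, to be where the actual work resides; the present statement is then a pure specialization to $\mathbb{G}=\mathbb{CS}_P$ and $\nabla=\nablaa$.
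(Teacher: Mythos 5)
Your proposal is correct and follows essentially the same route as the paper: the paper deduces this corollary from Lemma \ref{catgeomstr} (i), whose proof is exactly the observation you make (Remark \ref{catgens}) that one repeats the construction of the 2-groupoid $\triv{\mathbb{G}}$ from Lemma \ref{trivgroupoid} inside the 2-category of bundle gerbes \emph{with connection}, and then specializes to $\mathbb{G}=\mathbb{CS}_P$ with $\nabla=\nablaa$ from Theorem \ref{cancon}. Your identification of where the actual work lies --- the general coherence of $\triv{\mathbb{G},\nabla}$, with compatibility treated as structure rather than property --- matches the paper's division of labor between the corollary and Section \ref{triv2cat}.
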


We remark that the formalism of \quot{string bundles with connection} introduced later in \cite{Fiorenza} has the same feature. 
The second result  extends  Theorem \ref{stringclassi} from string structures to geometric string structures.

\begin{corollary}
\label{geomstrtorsor}
Let $P$ be a principal $\spin n$-bundle over $M$ with connection $A$, and suppose $\frac{1}{2}p_1(P)=0$. Then,
$(P,A)$ admits a geometric string structure. The set of isomorphism classes of geometric string structures is a torsor over  the differential cohomology group $\hat\h^3(M,\Z)$.
\end{corollary}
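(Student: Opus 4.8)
The plan is to separate the existence claim from the torsor claim. For existence, I would invoke Theorem~\ref{class}: because $\frac{1}{2}p_1(P)=0$, the Chern-Simons 2-gerbe $\mathbb{CS}_P$ admits a trivialization $\mathbb{T}$. It remains to produce a string connection on $\mathbb{T}$, that is, a connection $\babla$ compatible with the canonical connection $\nablaa$ of Theorem~\ref{cancon}. This is exactly the existence statement for string connections established in the body of the article (the third result announced in the Introduction), and granting it the pair $(\mathbb{T},\babla)$ is a geometric string structure on $(P,A)$ in the sense of Definition~\ref{stringcon}. In particular the set of isomorphism classes is non-empty.

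For the torsor structure I would work inside the 2-groupoid $\triv{\mathbb{CS}_P,\nablaa}$ of Corollary~\ref{geomstrcat} and define an action of $\hat\h^3(M,\Z)$ by tensoring. Recall that isomorphism classes of bundle gerbes with connection over $M$ form a group canonically isomorphic to $\hat\h^3(M,\Z)$. Given a geometric string structure $(\mathbb{T},\babla)$ and such a gerbe $\mathcal{G}$, tensoring yields $(\mathbb{T}\otimes\mathcal{G},\babla\otimes\nabla_{\mathcal{G}})$; since $\mathcal{G}$ is a $1$-gerbe its characteristic class in $\h^4(M,\Z)$ vanishes, so the tensor product is again a trivialization of $\mathbb{CS}_P$ and its connection is again compatible with $\nablaa$. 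This descends to a well-defined action on isomorphism classes.

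To see that the action is free and transitive I would construct the inverse operation: given two geometric string structures $(\mathbb{T},\babla)$ and $(\mathbb{T}',\babla')$, their \quot{difference} $\mathcal{H}om(\mathbb{T},\mathbb{T}')$, formed over the surjective submersion underlying $\mathbb{CS}_P$, is a bundle gerbe which, by the descent conditions built into the notion of a trivialization, descends to a bundle gerbe with connection $\mathcal{D}$ on $M$ satisfying $(\mathbb{T}',\babla')\cong(\mathbb{T},\babla)\otimes\mathcal{D}$. Transitivity is then the existence of $\mathcal{D}$ and freeness is the statement that $\mathcal{D}$ is trivial exactly when the two structures are isomorphic; both reduce to the classification of bundle gerbes with connection by $\hat\h^3(M,\Z)$. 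Finally I would check compatibility with Theorem~\ref{stringclassi}(b): forgetting connections sends geometric string structures to string structures, and along the characteristic-class surjection $\hat\h^3(M,\Z)\to\h^3(M,\Z)$ appearing in the exact sequence $0 \to \Omega^2(M)/\Omega^2_{\Z}(M) \to \hat\h^3(M,\Z) \to \h^3(M,\Z) \to 0$ this forgetful map is equivariant. Thus the $\hat\h^3(M,\Z)$-action refines the known $\h^3(M,\Z)$-torsor, while the subgroup $\Omega^2(M)/\Omega^2_{\Z}(M)$ acts on the string connections over a fixed trivialization.

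The main obstacle I expect is the third step: producing, from the difference of two connection-compatible trivializations, an honest bundle gerbe with connection on $M$, and verifying that this difference construction is inverse to the tensor action while simultaneously respecting both the connection data (the degree one and two forms of a string connection) and the underlying $\h^3(M,\Z)$-torsor. This requires the full theory of connections on bundle 2-gerbes and their trivializations, together with careful bookkeeping of the descent data through the $\mathcal{H}om$ and tensor operations.
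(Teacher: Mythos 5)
Your proposal is correct and follows essentially the same route as the paper: the paper deduces this corollary from Lemma \ref{geomstrex} (existence of a trivialization with compatible connection once $\mathrm{CC}(\mathbb{CS}_P)=0$), Lemma \ref{catgeomstr} (the module structure of $\ugrbcon M$ on trivializations with compatible connection, free and transitive on isomorphism classes, with transitivity proved exactly by your descent-of-the-difference-gerbe argument) and the classification $\hc 0 \ugrbcon M \cong \hat\h^3(M,\Z)$. The only cosmetic difference is your existence step, which invokes Theorem \ref{class} together with Theorem \ref{stringconcon} rather than Lemma \ref{geomstrex} directly; this is not circular, but since Theorem \ref{stringconcon} is itself proved in the paper from Lemma \ref{geomstrex} and the same module structure, it amounts to the same underlying machinery.
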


Here we understand differential cohomology in the axiomatic sense of Simons and Sullivan \cite{simons1}. In the context of bundle gerbes and bundle 2-gerbes, Deligne cohomology is often an appropriate realization. We recall that one  feature of differential cohomology is a commutative diagram:
\begin{equation*}
\alxydim{}{\hat\h^k(M,\Z) \ar[r]^-{\mathrm{pr}} \ar[d]_{\Omega} & \h^k(M,\Z) \ar[d]^{\iota^{*}} \\ \Omega_{\mathrm{cl}}^k(M) \ar[r] & \h^k(M,\R)\text{.}}
\end{equation*}
Acting with a class $\kappa\in\hat\h^3(M,\Z)$ on a geometric string structure covers the  action of $\mathrm{pr}(\kappa)\in\h^3(M,\Z)$ on the underlying string structure from Theorem \ref{stringclassi} (b).

The next result is an interesting relation between geometric string structures and 3-forms on $M$. 
\begin{theorem}
\label{threeform}
Let $\pi:P \to M$ be a principal $\spin n$-bundle over $M$ with a connection $A$. 
Let $(\mathbb{T},\babla)$ be a geometric string structure on $(P,A)$. Then, there exists a unique 3-form $H_{\babla}\in\Omega^3(M)$ such that
\begin{equation}
\label{threeformdiff}
\pi^{*}H_{\babla} = K_{\babla}+TP(A)\text{,}
\end{equation}
where $K_{\babla}$ is the   3-form that represents the string class $\xi_{\mathbb{T}} \in \h^3(P,\Z)$, and $TP(A)$ is the Chern-Simons 3-form associated to the connection $A$.
Moreover, $H_{\babla}$ has the following properties:
\begin{enumerate}
\item[(a)]
Its derivative $\mathrm{d}H_{\babla}$ is one-half of the Pontryagin 4-form of $A$.

\item[(b)] 
It depends only on the isomorphism class of $(\mathbb{T},\babla)$.
\item[(c)]
For $\kappa\in\hat \h^3(M,\Z)$ we have
\begin{equation*}
H_{\babla.\kappa} = H_{\babla} + \Omega(\kappa)\text{.}
\end{equation*}
under the action of Corollary \ref{geomstrtorsor}.
\end{enumerate}
\end{theorem}

This theorem follows from general results about connections on bundle 2-gerbes that we derive in Lemmata \ref{3form} and \ref{3form2}. 

Redden shows with analytical methods \cite{redden1} in the case that $P$ is a spin structure on a Riemannian manifold $(M,g)$ and $A$ is the Levi-Cevita connection, a string class $\xi$ determines a 3-form $H_{\xi}$ on $M$ with properties analogous to \erf{threeformdiff} and (a) of Theorem \ref{threeform}. One can  show that for every string class $\xi$ there exists a string connection $\babla$ such that $H_{\xi}= H_{\babla}$ \cite{redden2}, but it is an open problem whether or not such a string connection is determined by the given data.

Our last result is
the following theorem.

\begin{theorem}
\label{stringconcon}
For $P$ a principal $\spin n$-bundle over $M$, let $\mathbb{T}$ be a string structure. For any choice of a connection $A$ on $P$, there exists a string connection $\babla$ on $\mathbb{T}$. The set of  string connections on $\mathbb{T}$ is an affine space.
\end{theorem}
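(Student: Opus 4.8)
The plan is to treat a string connection as ordinary connection-and-curving data on the bundle gerbe underlying the trivialization $\mathbb{T}$, constrained by the requirement (Definition \ref{stringcon}) that it be compatible with the canonical connection $\nablaa$ of Theorem \ref{cancon}. I first unwind the data involved. Over the surjective submersion $Y \longrightarrow M$ underlying $\mathbb{CS}_P$, the connection $\nablaa$ supplies a bundle gerbe connection and curving on the bundle gerbe of $\mathbb{CS}_P$ over $Y^{[2]}$, together with a higher curving form $B$ on $Y$ whose exterior derivative descends to one half of the Pontryagin $4$-form of $A$ on $M$. A trivialization $\mathbb{T}$ consists of a bundle gerbe $\mathcal{S}$ over $Y$ and a stable isomorphism $\phi$ over $Y^{[2]}$ from the gerbe of $\mathbb{CS}_P$ to the difference $\delta\mathcal{S}$, compatible with the multiplication over $Y^{[3]}$. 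A string connection is then a choice of connection and curving on $\mathcal{S}$ — whose $3$-curvature is essentially the form $K_{\babla}\in\Omega_{\mathrm{cl}}^3(P)$ mentioned after Definition \ref{stringcon} — making $\phi$ connection-preserving and compatible with $\nablaa$, with $B$, and with the multiplication.

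For existence I would build this data in three stages. First, every bundle gerbe admits a connection and curving (Murray \cite{murray}, Murray--Stevenson \cite{murray2}), so I fix any such data on $\mathcal{S}$. Second, I make the stable isomorphism $\phi$ connection-preserving: with both its source and target now carrying connections, the theory of morphisms of bundle gerbes with connection \cite{waldorf1} shows that $\phi$ can be promoted to a connection-preserving isomorphism after correcting by a globally defined $2$-form on $Y^{[2]}$, and I absorb the $\delta$-exact part of this correction into the curving of $\mathcal{S}$. Third, the remaining compatibility with the higher curving $B$ reduces to a single equation on $Y$ of the shape $d\rho = \theta$ (or $\delta$ of such), where $\theta$ is a specific closed form assembled from $B$, the curving of $\mathcal{S}$, and the residual correction from the second stage; I then solve for the unknown form $\rho$ and reinstate it as part of the string connection.

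The affine structure follows by linearizing the same equations. Given two string connections $\babla$ and $\babla'$ on the fixed trivialization $\mathbb{T}$, their difference is connection data on the trivial gerbe over $Y$; because both are compatible with the same $\nablaa$ and trivialize the same gerbe of $\mathbb{CS}_P$, the compatibility conditions force this difference to descend to a globally defined differential form on $M$. Conversely, since all the compatibility equations are affine in the connection data and such a global form is automatically closed under the relevant operators, adding it to a given $\babla$ produces another string connection. Hence the set of string connections on $\mathbb{T}$ is a torsor over this vector space of forms on $M$, and in particular nonempty once existence is known — that is, an affine space.

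The main obstacle is the third stage of the existence argument: one must verify that the closed form $\theta$ is exact, i.e. that its de Rham class vanishes. This is exactly where the hypothesis enters, for the very existence of the connectionless trivialization $\mathbb{T}$ encodes, via Theorem \ref{class} and Stevenson's vanishing criterion \cite{stevenson2}, that $\frac{1}{2}p_1(P)=0$; this is what kills the obstruction so that $\rho$ can be found globally, for instance by a partition of unity on $M$ combined with the surjectivity of the bundle gerbe curvature map onto its cohomology class. Carefully tracking the degrees and signs relating $K_{\babla}$, $B$ and the Chern--Simons form $TP(A)$ — the same bookkeeping that underlies Theorem \ref{threeform} — is where the genuine work lies; the remaining steps are formal consequences of the established connection theory for bundle gerbes \cite{murray2,waldorf1,waldorf5}.
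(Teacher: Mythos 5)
Your affine-structure argument contains a genuine error. You claim that the difference of two string connections on the fixed $\mathbb{T}$ ``descends to a globally defined differential form on $M$'', so that the set of string connections is a torsor over a space of forms on $M$. This is false. The paper's Proposition \ref{trivconaff} identifies the translation space as
\begin{equation*}
V_{\mathbb{T}} \;=\; \left( \Omega^2(M) \oplus \Omega^1(Y) \oplus \Omega^1(W) \right) / U\text{,}
\qquad
U = \left\lbrace \left(\mathrm{d}\chi,\; \pi^{*}\chi + \nu,\; \omega^{*}\nu\right) \right\rbrace\text{,}
\end{equation*}
where $\omega: W \to Y$ is the covering of the bundle gerbe $\mathcal{S}$ of $\mathbb{T}$. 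A class $(0,0,\varphi)$ with $\varphi \in \Omega^1(W)$ \emph{not} a pullback from $M$ is nonzero in $V_{\mathbb{T}}$; it acts by simultaneously shifting the connection on $\mathcal{S}$ (Lemma \ref{affine}) and the connection on the isomorphism $\mathcal{A}$ (Lemma \ref{affineiso}) in a correlated way, and by freeness of the action (Lemma \ref{lemaction2}) it produces a genuinely different compatible connection. The difference of this new string connection from the old one is not a form on $M$, so the transitivity step in your argument fails: your parameterization of the affine space is wrong, even though the bare conclusion ``affine space'' happens to be true.

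Your existence argument also has a gap. The compatibility conditions for a string connection do not reduce to ``a single equation on $Y$ of the shape $d\rho = \theta$'': the data lives on $W$, on $W \times_Y W$, and on the covering $Z \to Y^{[2]}$ of $\mathcal{A}$, subject to curvature matching over $Y^{[2]}$ and, crucially, to the requirement that the transformation $\sigma$ (over a covering of $Y^{[3]}$) be connection-preserving --- a condition your three stages never address. Killing the resulting obstructions is a descent problem spread over several simplicial levels (in effect a Deligne-cohomology computation), not a de Rham exactness question on $Y$, and the hypothesis $\frac{1}{2}p_1(P)=0$ does not enter as the vanishing of a de Rham class of some closed $\theta$. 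The paper avoids this multi-level analysis by a different route: by the exact sequence \erf{delex} and the classification of bundle 2-gerbes with connection by $\hat\h^4(M,\Z)$, the vanishing of $\mathrm{CC}(\mathbb{CS}_P)$ yields \emph{some} trivialization $\mathbb{T}'$ with compatible connection (Lemma \ref{geomstrex}); this connection is then transported to the \emph{given} $\mathbb{T}$ using the torsor structure of trivializations (Lemma \ref{trivclass}), a choice of connection on the comparison gerbe $\mathcal{K}$ (Lemma \ref{conex}), the action of gerbes with connection (Lemma \ref{catgeomstr}), and the pullback of compatible connections along the 1-morphism $\mathbb{B}: \mathbb{T} \to \mathcal{K}.\mathbb{T}'$ (Lemma \ref{conind}). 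Some mechanism of this kind --- relating the given $\mathbb{T}$ to a trivialization already known to carry a compatible connection --- is exactly what your direct construction is missing.
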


This result verifies a conjecture remarked by Stolz and Teichner \cite[Theorem 5.3.5]{stolz1}, when this conjecture is interpreted in our framework. For the proof of Theorem \ref{stringconcon}, we show in full generality that every trivialization of every bundle 2-gerbe with connection admits a connection (Proposition \ref{trivconex}), and that the set of such connections is an affine space (Proposition \ref{trivconaff}).

\section{Bundle 2-Gerbes and their Trivializations}

\label{2gerbes}

We provide  definitions of bundle 2-gerbes and  trivializations, describe their relation to string structures and prove the results stated in Section \ref{sumstring}.
Connections will be added to the picture in Section \ref{con2gerbes}.
\subsection{The Chern-Simons Bundle 2-Gerbe}

\label{cstwogerbe}
 
We give the details for the proof of Theorem \ref{csexist}: we describe the construction of the Chern-Simons 2-gerbe $\mathbb{CS}_P$ associated to a principal $\spin n$-bundle $P$.

First we recall the definition of a bundle 2-gerbe, which is based very much on the notion a bundle (1-)gerbe introduced by Murray \cite{murray}. All bundle gerbes in this article have structure group $\ueins$.   Since bundle gerbes become more and more common, I only recall a few facts at this place; additional background is provided in Section \ref{background}. Apart from these, the reader is referred to \cite{murray3,schweigert2} for introductions, and to \cite{stevenson1,waldorf1} for detailed treatments. 
\begin{enumerate}
\item 
For $M$ a smooth manifold, bundle gerbes over $M$ form a strictly monoidal 2-groupoid $\ugrb M$. Its  1-morphisms will be called \emph{isomorphisms}, and its 2-morphisms will be called \emph{transformations}.

\item
Denoting by $\hc 0 \ugrb M$ the group of isomorphism classes of bundle gerbes over $M$, there is a  group isomorphism
\begin{equation*}
\mathrm{DD}: \hc 0 \ugrb M \to \h^3(M,\Z)\text{.}
\end{equation*}
For $\mathcal{G}$ a bundle gerbe, $\mathrm{DD}(\mathcal{G})$ is called the \emph{Dixmier-Douady class} of $\mathcal{G}$.

\item
For $f:M \to N$ a smooth map, there is a pullback 2-functor
\begin{equation*}
f^{*}: \ugrb N \to \ugrb M\text{,}
\end{equation*}
and whenever smooth maps are composable, the associated 2-functors compose strictly. 
\end{enumerate}

Let us fix some notation. We say that a \emph{covering} of a smooth manifold $M$ is a surjective submersion $\pi:Y \to M$. We denote the $k$-fold fibre product of $Y$ with itself by $Y^{[k]}$. This is again a smooth manifold, and for integers $k>p$, all the projections 
\begin{equation*}
\pi_{i_1,...,i_p}:Y^{[k]} \to Y^{[p]}:(y_1,...,y_k) \mapsto (y_{i_1},...,y_{i_p})
\end{equation*}
are smooth maps.

\begin{definition}[{\cite[Definition 5.3]{stevenson2}}]
\label{twogerbe}
A \emph{bundle 2-gerbe} over $M$ is a covering $\pi:Y \to M$ together with a bundle gerbe $\mathcal{P}$ over $Y^{[2]}$, an isomorphism 
\begin{equation*}
\mathcal{M}: \pi_{12}^{*}\mathcal{P} \otimes \pi_{23}^{*}\mathcal{P} \to \pi_{13}^{*}\mathcal{P}
\end{equation*}
of bundle gerbes over $Y^{[3]}$, and a transformation 
\begin{equation*}
\alxydim{@=1.8cm}{\pi_{12}^{*}\mathcal{P} \otimes \pi_{23}^{*}\mathcal{P} \otimes \pi_{34}^{*}\mathcal{P} \ar[r]^-{\pi_{123}^{*}\mathcal{M} \otimes \id} \ar[d]_{\id \otimes \pi_{234}^{*}\mathcal{M}} & \pi_{13}^{*}\mathcal{P} \otimes \pi_{34}^{*}\mathcal{P} \ar@{=>}[dl]|*+{\mu} \ar[d]^{\pi_{134}^{*}\mathcal{M}} \\ \pi_{12}^{*}\mathcal{P} \otimes \pi_{24}^{*}\mathcal{P} \ar[r]_-{\pi_{124}^{*}\mathcal{M}} & \pi_{14}^{*}\mathcal{P}}
\end{equation*}
over $Y^{[4]}$ that satisfies the  pentagon axiom shown in Figure \ref{fig:pentagon}. \end{definition}

The isomorphism $\mathcal{M}$ is called \emph{product} and the transformation $\mu$ is called the \emph{associator}. The pentagon axiom implies the cocycle condition for a certain degree three cocycle on $M$ with values in $\ueins$, which defines --- via the exponential sequence --- a class 
\begin{equation*}
\mathrm{CC}(\mathbb{G}) \in \h^4(M,\Z)\text{;}
\end{equation*}
see \cite[Proposition 7.2]{stevenson2} for the details. 
In the following we call $\mathrm{CC}(\mathbb{G})$ the \emph{characteristic class} of the bundle 2-gerbe $\mathbb{G}$.

\begin{figure}[t]
\begin{equation*}
\alxydim{@C=0.5cm@R=1.2cm}{&&\ast \ar@{=>}[dll]_*+{\id \circ (\pi_{1234}^{*}\mu \otimes \id)} \ar@{=>}[drr]^*+{\pi_{1345}^{*}\mu \circ \id}&&\\\ast \ar@{=>}[dr]_*+{\pi_{1245}^{*}\mu \circ \id}&&&&\ast \ar@{=>}[dl]^*+{\pi_{1235}^{*}\mu \circ \id}\\&\ast \ar@{=>}[rr]_*+{\id \circ (\id \otimes \pi_{2345}^{*}\mu)}&&\ast&}
\end{equation*}
\caption{The pentagon axiom for a bundle gerbe product $\mu$. It is an equality between transformations over $Y^{[5]}$. }
\label{fig:pentagon}
\end{figure}

Now let $G$ be a  Lie group. In fact we are only interested in $G=\spin n$, but the following construction applies in general. For more details on this construction, we refer the reader to \cite{carey4}.

Let $\pi: P \to M$ be a principal $G$-bundle over $M$.
The key idea in the construction of the Chern-Simons 2-gerbe $\mathbb{CS}_P$ is to take the bundle projection $\pi:P \to M$ as its covering. Its two-fold fibre product comes with a  smooth map $g:P^{[2]} \to G$ defined by $p'.g(p,p')=p$, for $p,p'\in P$ two points in the same fibre. Suppose now we have a bundle gerbe $\mathcal{G}$  over $G$ available. Then we put
\begin{equation*}
\mathcal{P} := g^{*}\mathcal{G}
\end{equation*}
as the bundle gerbe of  $\mathbb{CS}_P$.
 It turns out that the remaining structure, namely the product $\mathcal{M}$ and the associator $\mu$, can be induced from additional structure on the bundle gerbe $\mathcal{G}$ over $G$, called a \emph{multiplicative structure}. Bundle gerbes with multiplicative structure are called \emph{multiplicative bundle gerbes}. 

We have thus a bundle 2-gerbe $\mathbb{CS}_P(\mathcal{G})$ associated to every principal $G$-bundle $P$ and every multiplicative bundle gerbe $\mathcal{G}$ over $G$.
We remark that multiplicative bundle gerbes $\mathcal{G}$ over $G$ are classified up to isomorphism by $\h^4(BG,\Z)$ via a \emph{multiplicative class} $\mathrm{MC}(\mathcal{G})\in \h^4(BG,\Z)$ \cite{carey4}. The transgression homomorphism 
\begin{equation*}
\h^4(BG,\Z) \to \h^3(G,\Z)
\end{equation*} 
takes the multiplicative class  to the Dixmier-Douady class of the underlying bundle gerbe; see \cite[Proposition 2.11]{waldorf5}.

The relation between the  multiplicative class of a multiplicative bundle gerbe $\mathcal{G}$ over $G$ and the characteristic class of the associated Chern-Simons 2-gerbe $\mathbb{CS}_P(\mathcal{G})$ is the following.

\begin{lemma}[{\cite[Theorem 3.13]{waldorf5}}]
\label{cccs}
Let $P$ be a principal $G$-bundle over $M$, and let $\eta_P: M \to BG$ be a classifying map for $P$. Then,
\begin{equation*}
\mathrm{CC}(\mathbb{CS}_P(\mathcal{G})) = \eta^{*}_P\mathrm{MC}(\mathcal{G})\text{.}
\end{equation*}
\end{lemma}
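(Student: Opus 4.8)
The plan is to realise $\mathbb{CS}_P(\mathcal{G})$ as the pullback, along a simplicial model of the classifying map $\eta$, of the universal data over $G$ defining the multiplicative class, and then to invoke naturality of the characteristic class under pullback.

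First I would recall how both classes arise from local data assembled over simplicial manifolds. The multiplicative class $\mathrm{MC}(\mathcal{G})\in\h^4(BG,\Z)$ is read off from the simplicial manifold $NG_{\bullet}$ with $NG_k=G^k$, whose realisation is $BG$: the bundle gerbe $\mathcal{G}$ over $G=NG_1$, the product $\mathcal{M}$ over $NG_2=G^2$, and the associator $\mu$ over $NG_3=G^3$ combine, through the total complex of the double complex pairing a \v{C}ech resolution of each $G^k$ with the alternating-face simplicial differential, into a degree-four cocycle representing $\mathrm{MC}(\mathcal{G})$. The class $\mathrm{CC}(\mathbb{CS}_P(\mathcal{G}))$ is extracted in precisely the same way from the \v{C}ech nerve $P^{[\bullet+1]}$ of the covering $\pi:P\to M$, whose realisation is $M$, using the gerbe $\mathcal{P}$, the product of $\mathbb{CS}_P(\mathcal{G})$ over $P^{[3]}$, and its associator over $P^{[4]}$.

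The central step is the comparison map. Using the difference map $g:P^{[2]}\to G$, define $\phi_k:P^{[k+1]}\to G^k=NG_k$ by $\phi_k(p_0,\dots,p_k)=(g(p_0,p_1),\dots,g(p_{k-1},p_k))$. I would verify that $\phi_{\bullet}$ is a morphism of simplicial manifolds, the only nontrivial face identity reducing to the cocycle relation $g(p_0,p_2)=g(p_1,p_2)\,g(p_0,p_1)$ satisfied by $g$; on realisations $\phi_{\bullet}$ induces exactly the classifying map $\eta:M\to BG$, as $g$ is a classifying cocycle for $P$. By the very construction of $\mathbb{CS}_P(\mathcal{G})$, its defining data is $\phi_{\bullet}^{*}$ of the universal data: $\mathcal{P}=g^{*}\mathcal{G}=\phi_1^{*}\mathcal{G}$, and, using the same relation to identify $m\circ\phi_2$ with $g\circ\pi_{13}$ (where $m:G^2\to G$ is the multiplication, with the ordering dictated by the convention for $g$), the product and associator of $\mathbb{CS}_P(\mathcal{G})$ are $\phi_2^{*}\mathcal{M}$ and $\phi_3^{*}\mu$.

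I would then conclude by naturality. Since the local data of $\mathbb{CS}_P(\mathcal{G})$ is the $\phi_{\bullet}^{*}$-image of the data defining $\mathrm{MC}(\mathcal{G})$, the total-complex cocycle computing $\mathrm{CC}(\mathbb{CS}_P(\mathcal{G}))$ is the pullback under $\phi_{\bullet}$ of the one computing $\mathrm{MC}(\mathcal{G})$; passing to cohomology and using that $\phi_{\bullet}$ realises $\eta$, together with the naturality $\mathrm{CC}(f^{*}\mathbb{G})=f^{*}\mathrm{CC}(\mathbb{G})$ of Stevenson's characteristic class, gives $\mathrm{CC}(\mathbb{CS}_P(\mathcal{G}))=\eta^{*}\mathrm{MC}(\mathcal{G})$. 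The main obstacle I anticipate is the interface between the two settings: $\mathrm{CC}$ is defined for bundle 2-gerbes over honest finite-dimensional manifolds, whereas $\mathrm{MC}$ lives over $BG$, which is not a manifold and is accessed only through $NG_{\bullet}$. Making the naturality rigorous therefore amounts to matching the two cohomological constructions cocycle by cocycle across this divide, including the bookkeeping at simplicial degree $0$, where $NG_0$ is a point but $P^{[1]}=P$. A seemingly cleaner route---building a universal Chern-Simons 2-gerbe over $BG$, proving its characteristic class is $\mathrm{MC}(\mathcal{G})$, and pulling back along $\eta$ using $P\cong\eta^{*}EG$---only repackages this same obstacle, since defining that universal object forces one back to the simplicial model.
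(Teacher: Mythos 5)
The paper offers no proof of this lemma at all: it is imported verbatim from \cite{waldorf5}, Thm.~3.13, so there is no internal argument to compare against. Your proposal is correct and is essentially the proof given in that reference (building on the constructions of \cite{carey4} and \cite{stevenson2}): both classes are realized as degree-four cocycles in the \v{C}ech--simplicial double complexes of the simplicial manifolds $G^{\bullet}$ and $P^{[\bullet+1]}$, compared along the simplicial map induced by $g$ whose realization is the classifying map $\eta$; in particular your cocycle identity $g(p_0,p_2)=g(p_1,p_2)\,g(p_0,p_1)$ is the right one for the paper's convention $p'.g(p,p')=p$, and the ordering subtlety you flag in identifying the product with $\phi_2^{*}\mathcal{M}$ is precisely the point where that convention must be used consistently.
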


Let us restrict to $G=\spin n$, and  assume a principal $\spin n$-bundle $P$ over $M$ with a classifying map $\eta_P$. Suppose we have a multiplicative bundle gerbe $\mathcal{G}$ over $G$ such that
\begin{equation}
\label{2}
\eta_P^{*}\mathrm{MC}(\mathcal{G}) = {\textstyle\frac{1}{2}}p_1(P) \in \h^4(M,\Z)\text{.}
\end{equation}
Then, by Lemma \ref{cccs}, we obtain
\begin{equation*}
\mathrm{CC}(\mathbb{CS}_P(\mathcal{G})) = \textstyle{\frac{1}{2}}p_1(P)\text{,}
\end{equation*}
which proves Theorem \ref{csexist}.

In order to find a  multiplicative bundle gerbe $\mathcal{G}$ satisfying \erf{2} we use the following result of McLaughlin.

\begin{lemma}[{\cite[Lemma 2.2]{mclaughlin1}}]
\label{lem:mcl}
Let $P$ be a principal $\spin n$-bundle over $M$, and let $\eta_P:M \to B\spin n$ be a classifying map for $P$. Then, there is a unique $\tau\in\h^4(B\spin n,\Z)$  whose transgression is the standard generator of $\h^3(\spin n, \Z)$. Moreover,
\begin{equation*}
\eta_P^{*}\tau = \textstyle{\frac{1}{2}}p_1(P)\text{.}
\end{equation*}
\end{lemma}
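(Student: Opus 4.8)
The plan is to split the statement into a purely topological part --- the existence and uniqueness of $\tau$ --- and a characteristic-class identification $\eta^{*}\tau=\tfrac{1}{2}p_1(P)$, which I would reduce to a universal equality in $\h^4(B\spin n,\Z)$ and then transport to $M$ by naturality.

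First I would pin down the low-degree cohomology. Since $n>4$, the group $\spin n$ is compact, simple and simply-connected, hence $2$-connected (any Lie group has $\pi_2=0$, and $\pi_1\spin n=0$); therefore $\h^1(\spin n,\Z)=\h^2(\spin n,\Z)=0$ and $\h^3(\spin n,\Z)\cong\Z$, while $B\spin n$ is $3$-connected with $\h^4(B\spin n,\Z)\cong\Z$. I would then run the Serre spectral sequence of the universal fibration $\spin n\rightarrow\mathrm{E}\spin n\rightarrow B\spin n$. As $\h^3(\spin n,\Z)$ is the first non-vanishing reduced cohomology of the fibre, all earlier differentials out of $E_2^{0,3}$ and into $E_2^{4,0}$ vanish for degree reasons, so the only differential linking them is the transgression $d_4\colon E_4^{0,3}\rightarrow E_4^{4,0}$; since $\mathrm{E}\spin n$ is contractible, both entries must disappear at $E_\infty$, which forces $d_4$ to be an isomorphism. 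Consequently the transgression $\h^4(B\spin n,\Z)\rightarrow\h^3(\spin n,\Z)$ used in the excerpt (the inverse of $d_4$) is an isomorphism of infinite cyclic groups, so the preimage $\tau$ of the preferred generator exists, is unique, and is itself a generator of $\h^4(B\spin n,\Z)$.

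For the second assertion I would identify $\tau$ with the universal class $\tfrac{1}{2}p_1\in\h^4(B\spin n,\Z)$; naturality of characteristic classes along $\eta$ then gives $\eta^{*}\tau=\eta^{*}\bigl(\tfrac{1}{2}p_1\bigr)=\tfrac{1}{2}p_1(P)$ at once. The class $\tfrac12 p_1$ is well defined because, for $n\ge 5$, the pullback of the Pontryagin class $p_1\in\h^4(B\so n,\Z)$ along $B\spin n\rightarrow B\so n$ is twice a generator of $\h^4(B\spin n,\Z)\cong\Z$, and $\tfrac12 p_1$ denotes that generator. Since $\tau$ is also a generator of the same infinite cyclic group, $\tau=\pm\tfrac12 p_1$, and only the sign remains. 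I would fix it in real coefficients through Chern--Weil theory: with respect to the basic inner product $\langle\,\cdot\,,\,\cdot\,\rangle$ on the Lie algebra, $\tfrac12 p_1$ is represented by a constant multiple of $\langle F\wedge F\rangle$ whose Chern--Simons transgression is the Cartan $3$-form $\tfrac16\langle\theta\wedge[\theta\wedge\theta]\rangle$, and the basic normalization is exactly the one for which this $3$-form represents the integral generator of $\h^3(\spin n,\Z)$. Taking the preferred generator to be this one gives $\tau=\tfrac12 p_1$ on the nose.

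I expect the integrality-and-sign step to be the main obstacle. The divisibility of $p_1$ by $2$ on $B\spin n$, together with the claim that $\tfrac12 p_1$ is itself a \emph{generator}, is exactly where the hypothesis $n>4$ enters: for the exceptional low-rank group $\spin 3\cong\su 2$ one computes $p_1=-4c_2$, so the analogous factor is $4$ rather than $2$ and $\tfrac12 p_1$ would not be a generator. Establishing the factor $2$ for $n\ge 5$ --- and with it the normalization under which the transgression of $\tfrac12 p_1$ is the preferred generator and not a proper multiple --- rests on the known integral structure of $\h^{*}(B\spin n,\Z)$ in low degrees; this bookkeeping, rather than the spectral-sequence argument, is the delicate part.
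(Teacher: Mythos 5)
The paper offers no proof of this lemma at all: it is imported wholesale from McLaughlin \cite{mclaughlin1}, so there is no internal argument to measure yours against. Your reconstruction is correct and is, in essence, the standard proof of the cited result. The first half is airtight: the $2$-connectivity of $\spin n$ kills every differential touching $E^{0,3}$ and $E^{4,0}$ in the Serre spectral sequence of $\spin n \to \mathrm{E}\spin n \to B\spin n$ except $d_4$, and contractibility of the total space forces $d_4$ to be an isomorphism, giving existence and uniqueness of $\tau$ as a generator of $\h^4(B\spin n,\Z)\cong\Z$. The second half rests on two inputs you invoke rather than prove: the fact that $p_1$ pulled back from $B\so n$ is exactly twice a generator of $\h^4(B\spin n,\Z)$ when $n\geq 5$ (you correctly isolate this as the place where $n>4$ enters, contrasting with $\spin 3\cong\su 2$ where the factor is $4$), and the normalization fixing the sign. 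Both are legitimately standard, and your proposed resolution of the sign is exactly consistent with the paper's own conventions: the paper declares the preferred generator of $\h^3(\spin n,\Z)$ to be the one represented by the Cartan $3$-form $\frac{1}{6}\left\langle\theta\wedge[\theta\wedge\theta]\right\rangle$ with the basic normalization of the Killing form, cf.\ \erf{canthreeform}, which is precisely the normalization under which the Chern--Simons transgression argument identifies $\tau$ with $\frac{1}{2}p_1$ on the nose. So your argument is sound and supplies the proof that the paper delegates to the literature; to make it fully self-contained one would only need to add the low-degree computation of the integral cohomology of $B\spin n$ underlying the divisibility claim.
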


The lemma tells us that the multiplicative bundle gerbe $\mathcal{G}$ we want to find has multiplicative class equal to $\tau$. In turn, this means that the underlying bundle gerbe $\mathcal{G}$ has a Dixmier-Douady class equal to the standard generator of $\h^3(\spin n,\Z)$. 
Such a bundle gerbe is well-known: the \emph{basic gerbe} over $\spin n$. The basic gerbe  has been constructed by Gaw\c edzki-Reis \cite{gawedzki1} and Meinrenken \cite{meinrenken1} in a smooth and finite-dimensional way using Lie-theoretical methods.

Over simple, connected, simply-connected Lie groups such as $\spin n$, every bundle gerbe carries a unique multiplicative structure, up to 1-isomorphism. In fact, Schommer-Pries has proved that the multiplicative structure is unique up to a \emph{contractible} choice of isomorphisms, i.e. each two 1-isomorphisms are related by a unique 2-isomorphism \cite[Theorem 99]{pries2}. 

However, for the \emph{basic gerbe} $\mathcal{G}$ it is  possible to select a \emph{specific} multiplicative structure, as explained in detail in \cite{Waldorf}. Briefly, we consider the (infinite-dimensional) bundle gerbe $\un(\tr_\mathcal{G})$ over $G$, obtained from $\mathcal{G}$ by transgression $\tr$, followed by regression $\un$; see \cite{waldorf10}. By \cite[Theorem A]{waldorf10}, the two bundle gerbes $\mathcal{G}$  and $\un(\tr_\mathcal{G})$ are  isomorphic via a canonical isomorphism $\mathcal{A}: \mathcal{G} \to \mathscr{R}(\mathscr{T}_{\mathcal{G}})$. 
The point is that the infinite-dimensional bundle gerbe $\un(\tr_\mathcal{G})$ carries a  multiplicative structure given by the  \emph{Mickelsson product} (see \cite{waldorf5} and \cite{mickelsson1}).  The isomorphism $\mathcal{A}$ pulls back this multiplicative structure to the basic gerbe $\mathcal{G}$. Since $\mathcal{G}$ is a finite-dimensional bundle gerbe, all its isomorphisms and transformations descend to finite-dimensional ones \cite[Theorem 1]{waldorf1}. That way, the pulled back infinite-dimensional multiplicative structure descends to a finite-dimensional one. 

Summarizing, the basic gerbe $\mathcal{G}$ over $\spin n$ comes equipped with a multiplicative structure. The associated Chern-Simons bundle 2-gerbe $\mathbb{CS}_P(\mathcal{G})$ is the one that is relevant in this article, and therefore abbreviated by $\mathbb{CS}_P$.

\begin{remark}
Multiplicative gerbes over a compact Lie group $G$  are the same as central Lie 2-group extensions of $G$  by $\mathcal{B}\ueins$ in the sense of Schommer-Pries \cite{pries2}; see \cite[Remark 101]{pries2} and \cite[Theorem 3.2.5]{Waldorf}. Under this equivalence, the infinite-dimensional multiplicative bundle gerbe $\un(\tr_\mathcal{G})$ corresponds to an infinite-dimensional model for the string group, similar to the one of \cite{baez9}, but still different. The finite-dimensional, multiplicative bundle gerbe $\mathcal{G}$ corresponds to a  finite-dimensional, smooth model for the string group.
\end{remark}

\subsection{Trivializations and String Structures}

\label{trivstring}

Next is the proof of  Theorem \ref{class}: we explain the relation between trivializations of the Chern-Simons 2-gerbe $\mathbb{CS}_P$ and string classes on $P$ in the sense of Definition \ref{stringstructure}.
 
\begin{definition}[{\cite[Definition 11.1]{stevenson2}}]
\label{deftriv}
Let $\mathbb{G}=(Y,\mathcal{P},\mathcal{M},\mu)$ be a bundle 2-gerbe over $M$. 
A \emph{trivialization} of $\mathbb{G}$ is a bundle gerbe $\mathcal{S}$ over $Y$, together with an isomorphism
\begin{equation*}
\mathcal{A}: \mathcal{P} \otimes \pi_2^{*}\mathcal{S} \to \pi_1^{*}\mathcal{S}
\end{equation*}
of bundle gerbes over $Y^{[2]}$ and a transformation
\begin{equation*}
\alxydim{@=1.3cm}{\pi_{12}^{*}\mathcal{P} \otimes \pi_{23}^{*}\mathcal{P} \otimes \pi_3^{*}\mathcal{S} \ar[r]^-{\id \otimes \pi_{23}^{*}\mathcal{A}} \ar[d]_{\mathcal{M} \otimes \id} & \pi_{12}^{*}\mathcal{P} \otimes \pi_{2}^{*}\mathcal{S} \ar@{=>}[dl]|*+{\sigma} \ar[d]^{\pi_{12}^{*}\mathcal{A}} \\ \pi_{13}^{*}\mathcal{P} \otimes \pi_{3}^{*}\mathcal{S} \ar[r]_-{\pi_{13}^{*}\mathcal{A}} & \pi_1^{*}\mathcal{S}}
\end{equation*}
over $Y^{[3]}$ that is compatible with the associator $\mu$ in the sense of Figure \ref{compass}.
\end{definition}

\begin{figure}[t]
\begin{footnotesize}
\begin{equation*}
\alxydim{@C=-6.2cm@R=1.6cm}{&&\pi_{14}^{*}\mathcal{A} \circ (\pi_{134}^{*}\mathcal{M} \otimes \id) \circ (\pi_{123}^{*}\mathcal{M} \otimes \id \otimes \id) \ar@/^2.3pc/@{=>}[drr]^>>>>>>>*+{\pi_{134}^{*}\sigma} \ar@/_2.3pc/@{=>}[dll]_>>>>>>>*+{\id \circ (\mu \otimes \id)}&&\\\pi_{14}^{*}\mathcal{A} \circ (\pi_{124}^{*}\mathcal{M} \otimes \id) \circ (\id \otimes \pi_{234}^{*}\mathcal{M} \otimes \id) \ar@/_0.8pc/@{=>}[dr]_*+{\pi_{124}^{*}\sigma \circ \id}&&&&\pi_{13}^{*}\mathcal{A} \circ (\pi_{123}^{*}\mathcal{M} \otimes \id) \circ (\id \otimes \id\otimes \pi_{34}^{*}\mathcal{A}) \ar@/^0.8pc/@{=>}[dl]^*+{\pi_{123}^{*}\sigma \circ \id}\\&\pi_{12}^{*}\mathcal{A} \circ (\id \otimes \pi_{24}^{*}\mathcal{A}) \circ (\id \otimes \pi_{234}^{*}\mathcal{M} \otimes \id)\hspace{1cm} \ar@{=>}@/_2.5pc/[rr]_*+{\id \circ (\id \otimes \pi_{234}^{*}\sigma)}&\hspace{11cm}& \hspace{1cm}\pi_{12}^{*}\mathcal{A} \circ (\id \otimes \pi_{23}^{*}\mathcal{A}) \circ (\id \otimes \id\otimes \pi_{34}^{*}\mathcal{A})&}
\end{equation*}
\end{footnotesize}
\caption{The compatibility condition between the associator $\mu$ of a bundle 2-gerbe and the transformation $\sigma$ of a trivialization. It is an equality of transformations over $Y^{[4]}$.}
\label{compass}
\end{figure}

The purpose of a trivialization is the following.

\begin{lemma}[{\cite[Proposition 11.2]{stevenson2}}]
\label{trivvanish}
The characteristic class of a bundle 2-gerbe vanishes if and only if it admits a trivialization. 
\end{lemma}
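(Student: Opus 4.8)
The plan is to reduce the statement to a cohomological coboundary condition, following the strategy that proves the analogous fact for bundle (1-)gerbes. By \cite{stevenson2}, Prop. 7.2, the characteristic class $\mathrm{CC}(\mathbb{G})\in\h^4(M,\Z)$ is computed from explicit local data. First I would choose a good open cover $\mathcal{U}=\{U_i\}$ of $M$ together with local sections $s_i\colon U_i \to Y$ of the covering $\pi\colon Y \to M$; the pairs $(s_i,s_j)$ and triples $(s_i,s_j,s_k)$ then map overlaps into $Y^{[2]}$ and $Y^{[3]}$. Pulling the bundle gerbe $\mathcal{P}$, the product $\mathcal{M}$, and the associator $\mu$ back along these maps and trivializing $\mathcal{P}$ over a further refinement yields a \v{C}ech $3$-cocycle $a\in\check C^3(\mathcal{U},\underline{\ueins})$ whose class, under the standard isomorphism $\h^4(M,\Z)\cong\check{\h}^3(M,\underline{\ueins})$ coming from the exponential sequence, equals $\mathrm{CC}(\mathbb{G})$. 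In these terms $\mathrm{CC}(\mathbb{G})=0$ is equivalent to $a$ being a \v{C}ech coboundary, and the whole lemma becomes the claim that $\mathbb{G}$ admits a trivialization if and only if $a=\delta b$ for some $2$-cochain $b$.

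For the direction ``trivialization $\Rightarrow$ vanishing'' I would take a trivialization $(\mathcal{S},\mathcal{A},\sigma)$ in the sense of Definition \ref{deftriv} and extract its local data over the same cover. The bundle gerbe $\mathcal{S}$ over $Y$ pulls back along the $s_i$ to bundle gerbes over the $U_i$ whose comparison data supply a $2$-cochain $b$, while the isomorphism $\mathcal{A}\colon\mathcal{P}\otimes\pi_2^{*}\mathcal{S}\to\pi_1^{*}\mathcal{S}$ over $Y^{[2]}$ and the transformation $\sigma$ over $Y^{[3]}$ together express exactly the identity $\delta b = a$. Hence $a$ is a coboundary and $\mathrm{CC}(\mathbb{G})=0$. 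This direction is essentially bookkeeping once the local data is written down.

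For the converse I would reverse the construction. Assuming $\mathrm{CC}(\mathbb{G})=0$, write $a=\delta b$ for a \v{C}ech $2$-cochain $b$. The first task is to assemble $b$ into an honest bundle gerbe $\mathcal{S}$ over the total space $Y$ rather than merely over the cover of $M$, passing from local \v{C}ech data back to a global geometric object by gluing over $Y$ (and using that a prescribed Dixmier--Douady type datum can be realized by a concrete bundle gerbe). With $\mathcal{S}$ in hand, the relation $\delta b=a$ is then re-read as producing the isomorphism $\mathcal{A}$ and the transformation $\sigma$, and the coherence built into the coboundary relation must be shown to translate into the compatibility of $\sigma$ with the associator $\mu$ required by Definition \ref{deftriv} and by the pentagon axiom of Definition \ref{twogerbe}.

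The hard part will be this reconstructive direction: converting the purely cohomological primitive $b$ into the full geometric trivialization data and verifying all coherence conditions. In particular, the compatibility of $\sigma$ with $\mu$ is a genuine $2$-categorical constraint, so establishing it means tracking the simplicial identities over $Y^{[4]}$ carefully rather than performing a routine calculation; I would organize this by working on the refined cover where every gerbe involved is strictly trivialized, so that $\mathcal{A}$ and $\sigma$ reduce to $\ueins$-valued functions and the compatibility becomes the single equation $\delta b = a$ together with its consequences. Conceptually, the underlying point is that a trivialization is precisely a stable isomorphism of $\mathbb{G}$ with the trivial bundle 2-gerbe, and such an isomorphism exists exactly when the obstruction class $\mathrm{CC}(\mathbb{G})$ is null.
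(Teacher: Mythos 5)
Your first direction (a trivialization forces $\mathrm{CC}(\mathbb{G})=0$) is fine in outline: adapting the choices in Stevenson's construction of the characteristic class to the data $(\mathcal{S},\mathcal{A},\sigma)$ exhibits the defining cocycle as a coboundary, and the paper itself offers nothing more detailed here, since it simply cites \cite{stevenson2}, Prop.~11.2 for this lemma. The genuine gap is in your converse, at the step \quot{assemble $b$ into an honest bundle gerbe $\mathcal{S}$ over the total space $Y$}. The cochain $b$ lives on a good cover of $M$ and is \emph{not} a cocycle (its coboundary is $a$), so gluing trivial gerbes over the sets $\pi^{-1}(U_i)$ with triple-overlap data $b_{ijk}$ fails on quadruple overlaps by exactly the obstruction $a$. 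Moreover, no construction of $\mathcal{S}$ from data living over (a cover of) $M$ can ever work: a gerbe glued from trivial pieces over the sets $\pi^{-1}(U_i)$ is trivializable on every fibre of $\pi$, because each fibre is wholly contained in a single $\pi^{-1}(U_i)$, whereas the gerbe of a trivialization is in general fibrewise \emph{non}trivial. The paper's own Proposition \ref{stringstructureaustriv} makes this concrete: for $\mathbb{G}=\mathbb{CS}_P$, the gerbe $\mathcal{S}$ of any trivialization pulls back along each fibre inclusion $\iota_p$ to a gerbe whose Dixmier-Douady class is the generator of $\h^3(\spin n,\Z)$. So the step as you describe it would fail; $b$ alone can never produce $\mathcal{S}$.

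The repair is to build $\mathcal{S}$ out of $\mathcal{P}$ itself: take $\mathcal{S}_i := (\id,s_i\circ\pi)^{*}\mathcal{P}$ over $\pi^{-1}(U_i)$, glue the $\mathcal{S}_i$ along isomorphisms induced by $\mathcal{M}$ and the chosen trivializations of the gerbes $(s_i,s_j)^{*}\mathcal{P}$, and let $b$ enter only as the correction that makes the triple-overlap 2-isomorphisms coherent over quadruple overlaps (this is where $\delta b=a$ is used); the 2-stack gluing property of bundle gerbes (\cite{stevenson1}, Prop.~6.7, invoked in Section \ref{bundlegerbesact}) then produces $\mathcal{S}$, after which $\mathcal{A}$ and $\sigma$ come from $\mathcal{M}$ and $\mu$, and the pentagon yields the compatibility of Figure \ref{compass}. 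Alternatively, note that your closing \quot{conceptual} remark is in fact the efficient proof, and it is the route the paper takes for the analogous statement with connections (Lemma \ref{geomstrex}, proved in Section \ref{trivcon}): a trivialization of $\mathbb{G}$ is literally an isomorphism $\mathbb{G}\to\mathbb{I}$ to the trivial bundle 2-gerbe, so once one invokes the classification of bundle 2-gerbes up to isomorphism by $\h^4(M,\Z)$ (respectively by $\hat\h^4(M,\Z)$, via \cite{johnson1}, in the setting with connections), $\mathrm{CC}(\mathbb{G})=0=\mathrm{CC}(\mathbb{I})$ immediately gives such an isomorphism. Either promote that remark to the actual argument, or carry out the $\mathcal{P}$-based gluing; as written, the reconstruction direction does not go through.
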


Applied to the Chern-Simons 2-gerbe $\mathbb{CS}_P$, Lemma \ref{trivvanish} proves the first part of Theorem \ref{class}: $P$ admits a string class  if and only if $\mathbb{CS}_P$ has a trivialization. The second part follows from Propositions \ref{stringstructureaustriv} and \ref{stringbijection} below. The first proposition explains how a trivialization defines a string class.

\begin{proposition}
\label{stringstructureaustriv}
Let $P$ be a principal $\spin n$-bundle over $M$, and let $\mathbb{T}=(\mathcal{S},\mathcal{A},\sigma)$ be a trivialization of the Chern-Simons 2-gerbe $\mathbb{CS}_P$. Then, 
\begin{equation*}
\xi_{\mathbb{T}} := \mathrm{DD}(\mathcal{S}) \in \h^3(P,\Z)
\end{equation*}
is a string class on $P$.
\end{proposition}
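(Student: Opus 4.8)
The plan is to work entirely at the level of Dixmier-Douady classes, where the trivialization data collapses to something very manageable. The only part of $\mathbb{T}=(\mathcal{S},\mathcal{A},\sigma)$ I will actually need is the isomorphism $\mathcal{A}:\mathcal{P}\otimes\pi_2^{*}\mathcal{S}\to\pi_1^{*}\mathcal{S}$ over $P^{[2]}$; the transformation $\sigma$ and its compatibility with $\mu$ play no role, since they are 2-morphism data invisible to $\mathrm{DD}$.

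First I would apply the Dixmier-Douady class to $\mathcal{A}$. Because $\mathrm{DD}$ descends to a group isomorphism on isomorphism classes (fact (2) of the recollections), it identifies isomorphic bundle gerbes and is additive under the monoidal product. Hence the mere existence of $\mathcal{A}$ yields, in $\h^3(P^{[2]},\Z)$,
\begin{equation*}
\mathrm{DD}(\mathcal{P})+\pi_2^{*}\xi=\pi_1^{*}\xi,\qquad\text{i.e.}\qquad \mathrm{DD}(\mathcal{P})=\pi_1^{*}\xi-\pi_2^{*}\xi.
\end{equation*}
Since $\mathcal{P}=g^{*}\mathcal{G}$ and $\mathcal{G}$ is the basic gerbe, whose Dixmier-Douady class is the preferred generator $\gamma\in\h^3(\spin n,\Z)$, naturality of $\mathrm{DD}$ under pullback gives $\mathrm{DD}(\mathcal{P})=g^{*}\gamma$. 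Combining, I obtain the identity
\begin{equation*}
g^{*}\gamma=\pi_1^{*}\xi-\pi_2^{*}\xi\qquad\text{in }\h^3(P^{[2]},\Z).
\end{equation*}

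The second step is to pull this identity back along a well-chosen slice. Fixing $p\in P$, I would define the smooth map
\begin{equation*}
s_p:\spin n\to P^{[2]},\qquad h\mapsto (p.h,\,p),
\end{equation*}
which lands in $P^{[2]}$ because $p.h$ and $p$ lie in the same fibre. I then record three identities: $\pi_1\circ s_p=\iota_p$; the composite $\pi_2\circ s_p$ is the constant map at $p$; and $g\circ s_p=\id_{\spin n}$, the last because by definition $g(p.h,p)$ is the unique group element carrying $p$ to $p.h$, namely $h$. Applying $s_p^{*}$ to the displayed identity and using that the constant map annihilates $\h^3$ (it factors through a point) gives precisely $\gamma=\iota_p^{*}\xi$, which is exactly the condition of Definition \ref{stringstructure}.

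I do not anticipate a serious obstacle: once $\mathrm{DD}$ is applied, the argument is a diagram-free computation. The only points that demand care are bookkeeping ones — matching the convention $p'.g(p,p')=p$ against the projection conventions so that it is $\pi_1$ (and not $\pi_2$) that produces $\iota_p$, and so that the sign in $\mathrm{DD}(\mathcal{P})=\pi_1^{*}\xi-\pi_2^{*}\xi$ comes out consistently — together with checking that $s_p$ is well defined and smooth. Everything else follows from the stated functoriality and additivity of the Dixmier-Douady class.
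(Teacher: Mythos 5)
Your proposal is correct and is essentially the paper's own proof: the paper uses the identical slice map $s_p:\spin n \to P^{[2]}: h \mapsto (p.h,p)$, the same three identities $g\circ s_p=\id$, $\pi_1\circ s_p=\iota_p$, $\pi_2\circ s_p$ constant, and the same vanishing of the Dixmier-Douady class over a point. The only difference is cosmetic: you apply $\mathrm{DD}$ to the isomorphism $\mathcal{A}$ first and then pull the resulting identity in $\h^3(P^{[2]},\Z)$ back along $s_p$, whereas the paper pulls back $\mathcal{A}$ itself via $s_p^{*}$ and then passes to Dixmier-Douady classes.
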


\begin{proof}
We have to show that the pullback of $\mathrm{DD}(\mathcal{S})$ along an inclusion $\iota_p:\spin n \to P$ is the generator of $\h^3(\spin n,\Z)$. By construction, this is nothing but the Dixmier-Douady class of the basic gerbe $\mathcal{G}$ which entered the structure of the Chern-Simons 2-gerbe $\mathbb{CS}_P$. 

Consider the isomorphism $\mathcal{A}:g^{*}\mathcal{G} \otimes \pi_2^{*}\mathcal{S} \to \pi_1^{*}\mathcal{S}$ of bundle gerbes over $P^{[2]}$ which is part of $\mathbb{T}$. For 
\begin{equation*}
s_p: \spin n \to P^{[2]}: g \mapsto (p.g,p)\text{,}
\end{equation*}
we observe that $g\circ s_p= \id$, $\pi_1 \circ s_p = \iota_p$, and $\pi_2 \circ s_p$ is a constant map. Thus, the pullback $s_p^{*}\mathcal{A}$ implies on the Dixmier-Douady classes
\begin{equation*}
\mathrm{DD}(\mathcal{G}) = \mathrm{DD}(\iota_p^{*}\mathcal{S})\text{,}  
\end{equation*}
since a bundle gerbe over a point has vanishing Dixmier-Douady class. 
\end{proof}

For the second proposition we need two lemmata.

\begin{lemma}
\label{trivgroupoid}
Trivializations of a fixed bundle 2-gerbe $\mathbb{G}$ form a 2-groupoid denoted $\triv{\mathbb{G}}$.
\end{lemma}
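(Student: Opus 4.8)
The plan is to produce the three layers of data of a 2-groupoid --- objects, 1-morphisms and 2-morphisms --- and to verify the 2-groupoid axioms by reducing each one to the corresponding structure in the strictly monoidal 2-groupoid $\ugrb Y$ of bundle gerbes over $Y$, together with its strict pullbacks to the fibre products $Y^{[k]}$. The objects are the trivializations $\mathbb{T}=(\mathcal{S},\mathcal{A},\sigma)$ of Definition \ref{deftriv}. The key observation is that all of the defining data of a trivialization already lives in $\ugrb Y$, $\ugrb{Y^{[2]}}$ and $\ugrb{Y^{[3]}}$, so that morphisms can be assembled out of isomorphisms and transformations of bundle gerbes, and their composition laws are simply inherited.

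First I would define the 1-morphisms. Given trivializations $\mathbb{T}=(\mathcal{S},\mathcal{A},\sigma)$ and $\mathbb{T}'=(\mathcal{S}',\mathcal{A}',\sigma')$, a 1-morphism $\mathbb{T} \to \mathbb{T}'$ consists of an isomorphism $\mathcal{B}:\mathcal{S} \to \mathcal{S}'$ of bundle gerbes over $Y$ together with a transformation $\beta$ over $Y^{[2]}$ filling the square whose edges are $\mathcal{A}$, $\mathcal{A}'$, $\pi_1^{*}\mathcal{B}$ and $\id\otimes\pi_2^{*}\mathcal{B}$, subject to a compatibility condition over $Y^{[3]}$ relating $\beta$ to the two transformations $\sigma$ and $\sigma'$. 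A 2-morphism between $(\mathcal{B},\beta)$ and $(\mathcal{B}',\beta')$ is a transformation $\phi$ from $\mathcal{B}$ to $\mathcal{B}'$ in $\ugrb Y$ that is compatible with $\beta$ and $\beta'$ over $Y^{[2]}$. Identities, together with vertical and horizontal composition of 2-morphisms, are taken directly from $\ugrb Y$; composition of 1-morphisms is given by composing the isomorphisms $\mathcal{B}$ and horizontally pasting the squares $\beta$, after which one checks that the pasted square again satisfies the $Y^{[3]}$-compatibility.

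Finally I would verify the groupoid conditions: every 1-morphism is invertible up to a 2-isomorphism, and every 2-morphism is strictly invertible. Both follow because $\ugrb Y$ is itself a 2-groupoid, so each $\mathcal{B}$ admits a weak inverse and each $\phi$ a strict inverse; the filling datum $\beta$ and the compatibility conditions then transport along these inverses by naturality and strictness of pullback. The main obstacle, and the only genuinely laborious part, is the coherence bookkeeping: one must confirm that composing the transformations $\beta$ preserves compatibility with $\sigma$ and $\sigma'$ over $Y^{[3]}$, that this compatibility is in turn compatible with the associator $\mu$ over $Y^{[4]}$, and that the interchange law holds. These are diagram chases that use only the pentagon axiom for $\mu$, the compatibility of $\sigma$ with $\mu$ built into Definition \ref{deftriv}, and the monoidal 2-groupoid axioms of $\ugrb{-}$; no new idea is required, but the diagrams are large and live over $Y^{[3]}$ and $Y^{[4]}$.
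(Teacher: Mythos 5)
Your proposal is correct and follows essentially the same route as the paper: identical definitions of 1-morphisms (an isomorphism $\mathcal{B}:\mathcal{S}\to\mathcal{S}'$ with a filling transformation $\beta$ over $Y^{[2]}$, compatible with $\sigma,\sigma'$ over $Y^{[3]}$) and of 2-morphisms (a transformation $\mathcal{B}_1\Rightarrow\mathcal{B}_2$ compatible with $\beta_1,\beta_2$), with compositions, identities and invertibility all inherited from the monoidal 2-groupoid $\ugrb{Y}$ and its strict pullbacks. The paper likewise dispatches the coherence verifications by reducing them to the axioms of the 2-groupoid of bundle gerbes, so no substantive difference remains.
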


This also implies Corollary \ref{string2cat}. For the moment it is enough to note that a 1-morphism between trivializations $(\mathcal{S}_1,\mathcal{A}_1,\sigma_1)$ and $(\mathcal{S}_2,\mathcal{A}_2,\sigma_2)$ involves an isomorphism $\mathcal{B}:\mathcal{S}_1 \to \mathcal{S}_2$ between the two bundle gerbes.
We defer a complete treatment of the 2-groupoid $\triv {\mathbb{G}}$ to Section \ref{triv2cat}.

The next lemma equips the 2-groupoid $\triv{\mathbb{G}}$ with an additional structure.

\begin{lemma}
\label{trivclass}
The 2-groupoid $\triv{\mathbb{G}}$ has the structure of a module over the monoidal 2-groupoid $\ugrb M$ of bundle gerbes over $M$. Moreover:
\begin{enumerate}
\item[(i)]
If $\mathbb{T}=(\mathcal{S},\mathcal{A},\sigma)$ is a trivialization of $\mathbb{G}$, and $\mathcal{K}$ is a bundle gerbe over $M$, the new trivialization $\mathbb{T}.\mathcal{K}$ has the bundle gerbe $\mathcal{S} \otimes \pi^{*}\mathcal{K}$.   

\item[(ii)]
On isomorphism classes,  a free and transitive action of the group
$\hc 0 \ugrb M$ on the set $\hc 0 \triv{\mathbb{G}}$ is induced.
\end{enumerate}
\end{lemma}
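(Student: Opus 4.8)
The plan is to exhibit the action as a 2-functor $\triv{\mathbb{G}}\times\ugrb M\to\triv{\mathbb{G}}$ and then to analyse its effect on the sets of isomorphism classes. On objects I would take exactly the prescription of (i): for a trivialization $\mathbb{T}=(\mathcal{S},\mathcal{A},\sigma)$ and a bundle gerbe $\mathcal{K}$ over $M$, set $\mathbb{T}.\mathcal{K}:=(\mathcal{S}\otimes\pi^{*}\mathcal{K},\mathcal{A}',\sigma')$. The decisive observation is that $\pi\circ\pi_{1}=\pi\circ\pi_{2}$ on $Y^{[2]}$ and $\pi\circ\pi_{1}=\pi\circ\pi_{2}=\pi\circ\pi_{3}$ on $Y^{[3]}$, so that the pullbacks of $\pi^{*}\mathcal{K}$ along the several face maps are canonically identified; the factor $\pi^{*}\mathcal{K}$ is therefore \emph{inert}. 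Hence I define $\mathcal{A}':=\mathcal{A}\otimes\id$ and $\sigma':=\sigma\otimes\id$ using these canonical identifications, and $\sigma'$ satisfies the compatibility with the associator $\mu$ required in Definition \ref{deftriv} because the identifications obey their own cocycle condition over $Y^{[4]}$, so that the compatibility reduces to that of $\sigma$. On the 1-morphisms of $\triv{\mathbb{G}}$ (whose underlying datum is an isomorphism $\mathcal{B}:\mathcal{S}_{1}\to\mathcal{S}_{2}$, see Lemma \ref{trivgroupoid}) and on isomorphisms $\beta:\mathcal{K}_{1}\to\mathcal{K}_{2}$ I let the action send these to $\mathcal{B}\otimes\pi^{*}\beta$, and analogously on 2-morphisms; the associativity and unit coherence of the module structure then follow from the \emph{strict} monoidality of the pullback 2-functor $\pi^{*}$ together with the monoidal structure of $\ugrb M$.

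For part (ii) I would first note that, being a 2-functor, the action descends to isomorphism classes, and that on Dixmier-Douady classes it is simply addition: $\mathrm{DD}(\mathcal{S}\otimes\pi^{*}\mathcal{K})=\mathrm{DD}(\mathcal{S})+\pi^{*}\mathrm{DD}(\mathcal{K})$. To prove \emph{transitivity}, given two trivializations $\mathbb{T}_{i}=(\mathcal{S}_{i},\mathcal{A}_{i},\sigma_{i})$ I would form the difference gerbe $\mathcal{Q}:=\mathcal{S}_{2}\otimes\mathcal{S}_{1}^{*}$ over $Y$. Tensoring $\mathcal{A}_{2}$ with the inverse of $\mathcal{A}_{1}$ cancels the common factor $\mathcal{P}$ and yields an isomorphism $\phi:\pi_{1}^{*}\mathcal{Q}\to\pi_{2}^{*}\mathcal{Q}$ over $Y^{[2]}$, while $\sigma_{1}$ and $\sigma_{2}$ assemble into a 2-isomorphism over $Y^{[3]}$; the compatibility of the $\sigma_{i}$ with $\mu$ furnishes the coherence condition over $Y^{[4]}$. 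Thus $(\mathcal{Q},\phi,\dots)$ is descent data, and since bundle gerbes form a 2-stack, $\mathcal{Q}$ descends along the surjective submersion $\pi$ to a bundle gerbe $\mathcal{K}$ over $M$ with $\pi^{*}\mathcal{K}\cong\mathcal{S}_{2}\otimes\mathcal{S}_{1}^{*}$. One then checks that this isomorphism is compatible with the trivialization data, so that $\mathbb{T}_{1}.\mathcal{K}\cong\mathbb{T}_{2}$; this also identifies the torsor with $\h^{3}(M,\Z)$ as in Theorem \ref{stringclassi}(b).

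For \emph{freeness}, I would unwind an isomorphism $\mathbb{T}.\mathcal{K}\cong\mathbb{T}$. Its underlying datum is an isomorphism $\mathcal{B}:\mathcal{S}\otimes\pi^{*}\mathcal{K}\to\mathcal{S}$, equivalently a trivialization $t:\pi^{*}\mathcal{K}\to\trivlin$ over $Y$. The compatibility of $\mathcal{B}$ with $\mathcal{A}$ and with $\mathcal{A}'=\mathcal{A}\otimes\id$ forces $\pi_{1}^{*}t$ and $\pi_{2}^{*}t$ to agree under the canonical identification of $\pi_{1}^{*}\pi^{*}\mathcal{K}$ with $\pi_{2}^{*}\pi^{*}\mathcal{K}$; hence $t$ is itself descent data for a trivialization of $\mathcal{K}$, so $\mathcal{K}\cong\trivlin$ and $\mathrm{DD}(\mathcal{K})=0$. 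I expect the genuine obstacle to lie in the two descent steps: verifying that the glued data $(\mathcal{Q},\phi)$ in the transitivity argument, and the trivialization $t$ in the freeness argument, really satisfy the cocycle and coherence conditions demanded by 2-descent. This is precisely where the associator $\mu$ and the compatibility of the $\sigma_{i}$ are consumed; by contrast, the coherence bookkeeping for the module structure in the first paragraph, though lengthy, is routine because the twisting factor $\pi^{*}\mathcal{K}$ is inert on all fibre products.
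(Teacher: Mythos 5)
Your construction of the module structure and your transitivity argument coincide with the paper's own proof (Section \ref{bundlegerbesact}): there, too, the action is $\mathbb{T}.\mathcal{K}=(\mathcal{S}\otimes\pi^{*}\mathcal{K},\,\mathcal{A}\otimes\id,\,\sigma\otimes\id)$, exploiting $\pi_{1}^{*}\pi^{*}\mathcal{K}=\pi_{2}^{*}\pi^{*}\mathcal{K}$, and transitivity is proved by exhibiting the difference gerbe $\mathcal{S}_{1}^{*}\otimes\mathcal{S}_{2}$ as a descent object for the 2-stack of bundle gerbes, with the descent isomorphism built from $\mathcal{A}_{1}^{*-1}\otimes\mathcal{A}_{2}$ and the duality morphism $\mathcal{D}_{\mathcal{P}}$, the cocycle 2-isomorphism from $\sigma_{1}^{*-1}\otimes\sigma_{2}$, and its associativity from the compatibility with $\mu$ (Figure \ref{compass}). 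Where you genuinely diverge is freeness: the paper argues cohomologically, observing that a 1-morphism $\mathbb{T}.\mathcal{K}\to\mathbb{T}$ forces $\pi^{*}\mathrm{DD}(\mathcal{K})=0$ and then asserting that $\pi^{*}$ is injective because $\pi$ is a covering, whereas you descend the induced trivialization $t:\pi^{*}\mathcal{K}\to\trivlin$ along $\pi$ using the 2-stack property at the level of 1-morphisms. Your route buys something real: injectivity of $\pi^{*}$ on $\h^{3}$ is not automatic for an arbitrary surjective submersion (it does hold in the intended application, where $\pi$ is a principal $\spin n$-bundle with 2-connected fibre), while your descent argument needs no cohomological input beyond the stack property already invoked for transitivity, so it treats general bundle 2-gerbes uniformly; the price is that the step where $\pi_{1}^{*}t$ and $\pi_{2}^{*}t$ are identified requires the same duality bookkeeping (naturality and zigzag for $\mathcal{D}$) that the paper spends on transitivity. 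Finally, be aware that the verifications you defer --- the $Y^{[4]}$ coherence of the descent data and the check that the descended pair $(\mathcal{C},\gamma)$ assembles into a 1-morphism $\mathbb{T}_{1}.\mathcal{K}\to\mathbb{T}_{2}$ compatible with $\sigma_{1},\sigma_{2}$ in the sense of Figure \ref{compmorph} --- constitute essentially the entire body of the paper's proof, carried out there by explicit pasting of transformations; your proposal correctly identifies where $\mu$ and the $\sigma_{i}$ are consumed, so this is a sound blueprint rather than a gap, but those pastings are where the actual work lies.
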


The definition of this action and the proof of its properties is an exercise in bundle gerbe theory and deferred to Section \ref{bundlegerbesact}. 

Now we can complete the proof of Theorem \ref{class}.

\begin{proposition}
\label{stringbijection}
Let $P$ be a principal $\spin n$-bundle over $M$. Then, the assignment
\begin{eqnarray*}
\bigset{4.2cm}{Isomorphism classes of trivializations of $\mathbb{CS}_P$} &\to&\bigset{3.5cm}{String classes on $P$}
\\[\medskipamount]
(\mathcal{S},\mathcal{A},\sigma)&\mapsto&\mathrm{DD}(\mathcal{S})
\end{eqnarray*}
is a bijection.
\end{proposition}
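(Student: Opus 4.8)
The plan is to exhibit the map of Proposition \ref{stringbijection} as an isomorphism of torsors over the group $\hc 0\ugrb M \cong \h^3(M,\Z)$. By Proposition \ref{stringstructureaustriv} the assignment $(\mathcal{S},\mathcal{A},\sigma)\mapsto\mathrm{DD}(\mathcal{S})$ is well-defined on the level of isomorphism classes, since a $1$-morphism of trivializations contains an isomorphism $\mathcal{B}:\mathcal{S}_1\to\mathcal{S}_2$, and isomorphic bundle gerbes have the same Dixmier-Douady class. So the content is bijectivity. Rather than construct an inverse by hand, I would argue that the map is \emph{equivariant} with respect to two free and transitive $\h^3(M,\Z)$-actions; a map between torsors that intertwines the actions of a single group is automatically a bijection.

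First I would fix the two torsor structures. By Theorem \ref{stringclassi}(b), the set of string structures on $P$ is a torsor over $\h^3(M,\Z)$, where $\eta\in\h^3(M,\Z)$ sends $\xi$ to $\xi+\pi^{*}\eta$. On the other side, Lemma \ref{trivclass}(ii) gives a free and transitive action of $\hc 0\ugrb M$ on $\hc 0\triv{\mathbb{CS}_P}$, and under the isomorphism $\mathrm{DD}:\hc 0\ugrb M\to\h^3(M,\Z)$ of fact (2) in Section \ref{cstwogerbe} this becomes an $\h^3(M,\Z)$-action. Note that Lemma \ref{trivvanish} (the first part of Theorem \ref{class}) guarantees both sets are nonempty precisely when $\frac{1}{2}p_1(P)=0$, so the torsor statements have content exactly when there is something to prove.

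The key step is then to check that the map $\mathrm{DD}(\mathcal{S})$ is equivariant. By Lemma \ref{trivclass}(i), acting with a bundle gerbe $\mathcal{K}$ over $M$ replaces $\mathcal{S}$ by $\mathcal{S}\otimes\pi^{*}\mathcal{K}$, where here $\pi:P\to M$ is the covering of $\mathbb{CS}_P$. Since $\mathrm{DD}$ is a group homomorphism with respect to the monoidal (tensor) structure and is natural under pullback, I compute
\begin{equation*}
\mathrm{DD}(\mathcal{S}\otimes\pi^{*}\mathcal{K}) = \mathrm{DD}(\mathcal{S}) + \pi^{*}\mathrm{DD}(\mathcal{K})\text{.}
\end{equation*}
Writing $\eta=\mathrm{DD}(\mathcal{K})\in\h^3(M,\Z)$, the right-hand side is exactly $\mathrm{DD}(\mathcal{S})+\pi^{*}\eta$, which is the image of $\mathrm{DD}(\mathcal{S})$ under the $\h^3(M,\Z)$-action on string structures from Theorem \ref{stringclassi}(b). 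Thus the map intertwines the two actions.

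Finally, equivariance finishes the argument: a base point exists on the left (some trivialization, by Lemma \ref{trivvanish}), its image is a string structure, and since both actions are free and transitive, the equivariant map is forced to be a bijection. I expect the main obstacle to be the compatibility step, namely verifying carefully that the Dixmier-Douady class is additive under the tensor product and natural under the pullback $\pi^{*}$ in precisely the form used in Lemma \ref{trivclass}(i), and that these are the \emph{same} $\h^3(M,\Z)$-structures appearing in Theorem \ref{stringclassi}(b)—this matching of independently-defined actions is where the real bookkeeping lies, whereas the torsor-bijection principle itself is formal.
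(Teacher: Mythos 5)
Your proposal is correct and is essentially the paper's own argument: well-definedness via the isomorphism $\mathcal{B}$ contained in a 1-morphism together with Proposition \ref{stringstructureaustriv}, and bijectivity by exhibiting the map as an equivariant map between $\h^3(M,\Z)$-torsors, with the torsor structures coming from Theorem \ref{stringclassi} (b) and Lemma \ref{trivclass} (ii), and equivariance from Lemma \ref{trivclass} (i). Your explicit check $\mathrm{DD}(\mathcal{S}\otimes\pi^{*}\mathcal{K})=\mathrm{DD}(\mathcal{S})+\pi^{*}\mathrm{DD}(\mathcal{K})$ merely spells out the step the paper leaves implicit.
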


\begin{proof}
After what we have said about the 1-morphisms between trivializations, the assignment $(\mathcal{S},\mathcal{A},\sigma) \mapsto \mathrm{DD}(\mathcal{S})$ does not depend on the choice of a representative. With Proposition \ref{stringstructureaustriv}, it is hence well-defined. It is a bijection because it is an equivariant map between $\h^3(M,\Z)$-torsors:
domain and codomain are torsors due to Lemma \ref{trivclass} (ii) and Theorem \ref{stringclassi} (b). The equivariance follows from Lemma \ref{trivclass} (i). 
\end{proof}

\begin{remark}
One can regard a trivialization of a bundle 2-gerbe $\mathbb{G}$ as a \quot{twisted  gerbe}, analogous to the notion of a \emph{twisted line bundle}, which is nothing but a trivialization of a gerbe \cite[Section 3]{waldorf1}. In \cite[Section 6]{Fiorenza} this analogy is extended to \emph{twisted string structures}.
\end{remark}

\section{Connections on Bundle 2-Gerbes}  

\label{con2gerbes}

We provide definitions of connections on bundle 2-gerbes and on their trivializations, introduce our notion of a string connection and outline the proofs of our results on string connections.

\subsection{Connections on Chern-Simons 2-Gerbes}

\label{sec:conncs}

We prove Theorem \ref{cancon}: for $\mathbb{CS}_P$,  the Chern-Simons 2-gerbe associated to a principal $\spin n$-bundle $P$, we construct  the connection $\nablaa$ on $\mathbb{CS}_P$ associated to  a connection $A$ on $P$.

Again, I only want to recall a few facts about connections on bundle gerbes at this place. Basic definitions are provided in Section \ref{cons}, and more detail is provided in \cite{stevenson1,waldorf1}. An important feature of a connection $\nabla$ on a bundle gerbe $\mathcal{G}$ is its curvature, which is a closed 3-form $\mathrm{curv}(\nabla) \in \Omega^3(M)$ and represents the Dixmier-Douady class $\mathrm{DD}(\mathcal{G})$ in real cohomology. Bundle gerbes with connection form another 2-groupoid $\ugrbcon M$, which has a forgetful 2-functor
\begin{equation*}
\ugrbcon M \to \ugrb M\text{.}
\end{equation*} 
As already mentioned in Section \ref{sumstringcon} one has to be aware that for a 1-morphism to be \quot{connection-preserving} is \emph{structure}, not property; see Definition \ref{conpres}. Thus, the 1-morphisms of $\ugrbcon M$ are \emph{isomorphisms with compatible connections}. The 2-morphisms are \emph{connection-preserving transformations}; here it is only property. 

\begin{definition}
\label{twogerbecon}
 Let $\mathbb{G} = (Y,\mathcal{P},\mathcal{M},\mu)$ be a bundle 2-gerbe over $M$. A \emph{connection} on $\mathbb{G}$ is a 3-form $B \in \Omega^3(Y)$, together with a connection $\nabla$ on $\mathcal{P}$ of curvature 
\begin{equation}
\label{condcurv}
\mathrm{curv}(\nabla)=\pi_2^{*}B-\pi_1^{*}B\text{,}
\end{equation}
and a compatible connection on the product $\mathcal{M}$, such that the associator $\mu$ is connection-preserving. 
\end{definition}

Analogously to bundle gerbes with connection, every connection on a bundle 2-gerbe has a curvature. It is the unique 4-form $F \in \Omega^4(M)$ such that $\pi^{*}F = \mathrm{d}B$. It is closed and its cohomology class coincides with the image of $\mathrm{CC}(\mathbb{G})$ in real cohomology \cite[Proposition 8.2]{stevenson2}.

We recall from Section \ref{cstwogerbe} that the Chern-Simons 2-gerbe $\mathbb{CS}_P(\mathcal{G})$ is specified by two parameters: the principal $G$-bundle $P$ and a  multiplicative bundle gerbe $\mathcal{G}$ over $G$. It turns out that a  connection on $\mathbb{CS}_P(\mathcal{G})$ is determined by three parameters: a connection $A$ on $P$, an invariant bilinear form $\left \langle -,-  \right \rangle$ on the Lie algebra $\mathfrak{g}$ of $G$, and a certain kind of connection on the multiplicative bundle gerbe $\mathcal{G}$ \cite{waldorf5}. 

In the case  $G=\spin n$, we choose the bilinear form to be the Killing form on $\mathfrak{spin}(n)$, normalized such that the closed 3-form
\begin{equation}
\label{canthreeform}
H=\textstyle{\frac{1}{6}}\left \langle \theta \wedge [\theta \wedge \theta]  \right \rangle \in \Omega^3(\spin n)\text{,}
\end{equation}
with $\theta$ the left-invariant Maurer-Cartan form,
represents the image of the standard generator $1\in\h^3(\spin n,\Z)$ in real cohomology.

Let us briefly recall the construction of connections on Chern-Simons 2-gerbes $\mathbb{CS}_P(\mathcal{G})$ following \cite{johnson1,carey4}.   
 The first ingredient, the 3-form on $P$, is the \emph{Chern-Simons 3-form}
\begin{equation*}
TP(A) := \left \langle  A \wedge \mathrm{d}A \right \rangle + {\textstyle\frac{2}{3}}\left \langle A \wedge A \wedge A  \right \rangle \in \Omega^3(P)\text{.}
\end{equation*}
The reason to choose this particular 3-form becomes clear when one computes the difference between its two pullbacks to $P^{[2]}$, 
\begin{equation*}
\pi_2^{*}TP(A) - \pi_1^{*}TP(A) = g^{*}H + \mathrm{d}\omega\text{,}
\end{equation*}
with $H$ the 3-form \erf{canthreeform} and a certain 2-form $\omega  \in \Omega^2(P^{[2]})$. 
According to condition \erf{condcurv}, this difference has to coincide with the curvature of the connection on the bundle gerbe $\mathcal{P} =g^{*}\mathcal{G}$ of $\mathbb{CS}_P(\mathcal{G})$.

The basic gerbe $\mathcal{G}$ of Gaw\c edzki-Reis and Meinrenken comes with a  connection of curvature $H$. So, the pullback gerbe $\mathcal{P}=g^{*}\mathcal{G}$ carries a connection of curvature $g^{*}H$. This connection can further be modified using the 2-form $\omega$ (see Lemma \ref{affine}), such that the resulting connection $\nabla_{\omega}$ on $\mathcal{P}$ has the desired  curvature $g^{*}H + \mathrm{d}\omega$. Thus, the connection $\nabla_{\omega}$ on $\mathcal{P}$ qualifies as the second ingredient of the connection on $\mathbb{CS}_P$.

We continue the construction of the connection $\nablaa$ following \cite{waldorf5}, where the remaining steps have been reduced to the problem of finding a (multiplicative) connection on the multiplicative structure on $\mathcal{G}$ satisfying a certain curvature constraint. Such connections are unique (up to an action of closed, simplicially closed 1-forms on $G \times G$, which does not change the isomorphism class of the multiplicative gerbe $\mathcal{G}$ with connection); see \cite[Corollary 3.1.9]{waldorf5}. Even better, the construction of the multiplicative structure on the basic gerbe described in Section \ref{cstwogerbe} produces exactly such a connection.

Thereby, we have the connection $\nablaa$ on $\mathbb{CS}_P$.

\begin{lemma}[{\cite[Theorem 3.12 (b)]{waldorf5}}]
\label{cscurv}
The curvature of the connection $\nablaa$ on the Chern-Simons 2-gerbe $\mathbb{CS}_P$ is the 4-form
\begin{equation*}
\left \langle  \Omega_A \wedge \Omega_A \right \rangle \in \Omega^4(M)\text{,}
\end{equation*}
where $\Omega_A$ is the curvature of $A$.
\end{lemma}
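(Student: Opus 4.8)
The plan is to reduce the statement to the defining property of the curvature of a bundle 2-gerbe connection and then to a single classical transgression identity for the Chern-Simons 3-form. Recall from the discussion preceding the lemma that a connection on $\mathbb{CS}_P$ has as its curvature the unique 4-form $F \in \Omega^4(M)$ determined by $\pi^{*}F = \mathrm{d}B$, where $B \in \Omega^3(P)$ is the 3-form part of the connection and $\pi:P \to M$ is the covering of $\mathbb{CS}_P$. The crucial structural point is that none of the higher connection data --- neither the connection $\nabla_{\omega}$ on $\mathcal{P}$ nor the connection on the multiplicative structure --- enters this formula: the curvature depends only on $B$. Since those higher data were fixed only up to isomorphism in the construction of $\nablaa$, this already shows the curvature to be independent of those non-explicit choices.

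First I would recall from the construction in Section \ref{cstwogerbe} that the 3-form part of $\nablaa$ is precisely the Chern-Simons 3-form
\begin{equation*}
B = TP(A) = \left\langle A \wedge \mathrm{d}A \right\rangle + {\textstyle\frac{2}{3}}\left\langle A \wedge A \wedge A \right\rangle \in \Omega^3(P)\text{.}
\end{equation*}
Thus the entire task is to compute $\mathrm{d}(TP(A))$ and to recognize the result as $\pi^{*}$ of the claimed 4-form on $M$.

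Next I would carry out the standard computation. Writing the curvature of $A$ in matrix notation as $\Omega_A = \mathrm{d}A + A \wedge A$, one differentiates $TP(A)$ term by term and uses the invariance of the bilinear form $\left\langle -,- \right\rangle$ --- equivalently the graded Leibniz rule $\mathrm{d}\left\langle \alpha \wedge \beta\right\rangle = \left\langle \mathrm{d}\alpha \wedge \beta\right\rangle + (-1)^{|\alpha|}\left\langle\alpha \wedge \mathrm{d}\beta\right\rangle$ together with the cyclicity of the trace form, which forces the purely quartic term $\left\langle A \wedge A \wedge A \wedge A\right\rangle$ to vanish and matches the cubic cross terms. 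The outcome is the fundamental Chern-Simons identity
\begin{equation*}
\mathrm{d}(TP(A)) = \left\langle \Omega_A \wedge \Omega_A \right\rangle \text{,}
\end{equation*}
an equality of 4-forms on $P$. Because the bilinear form is normalized exactly as in \erf{canthreeform}, no stray numerical factor appears, and this expression is precisely the half-Pontryagin form (cf. Theorem \ref{cancon}).

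Finally I would observe that $\left\langle \Omega_A \wedge \Omega_A \right\rangle$ is horizontal and $\spin n$-invariant --- horizontal because $\Omega_A$ is a tensorial 2-form, invariant because $\left\langle -,- \right\rangle$ is an invariant form on $\mathfrak{spin}(n)$ --- hence basic. A basic form on $P$ is the pullback of a unique form on $M$, so there is a unique $F \in \Omega^4(M)$ with $\pi^{*}F = \left\langle \Omega_A \wedge \Omega_A \right\rangle = \mathrm{d}(TP(A)) = \mathrm{d}B$. By the defining property of the curvature, this $F$ is the curvature of $\nablaa$, which proves the lemma. I do not expect a serious obstacle here: the only genuinely computational point is the transgression identity $\mathrm{d}(TP(A)) = \left\langle \Omega_A \wedge \Omega_A\right\rangle$, where one must track signs and the normalization carefully, while the main conceptual step is simply recognizing that the 2-gerbe curvature sees only the 3-form $B = TP(A)$ and is therefore blind to the higher connection choices.
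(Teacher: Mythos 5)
Your proposal is correct, and it is essentially the argument the paper intends: the paper gives no proof of this lemma itself (it simply cites \cite{waldorf5}, Thm.~3.13), but its surrounding discussion supplies exactly your ingredients --- the curvature of a bundle 2-gerbe connection is the unique $F\in\Omega^4(M)$ with $\pi^{*}F = \mathrm{d}B$, the 3-form of $\nablaa$ is $B = TP(A)$, and the classical transgression identity $\mathrm{d}\,TP(A) = \left\langle \Omega_A \wedge \Omega_A \right\rangle$ together with the fact that $\left\langle \Omega_A \wedge \Omega_A \right\rangle$ is basic (so it descends uniquely along $\pi$) closes the argument. Your structural observation that the 2-gerbe curvature is blind to the higher connection data, which the paper fixes only up to isomorphism, is also the right point and matches the paper's setup.
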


Since the curvature of any bundle 2-gerbe represents its characteristic class, which is  here $\frac{1}{2}p_1(P)$, it follows that the curvature of $\nablaa$ is one-half of the Pontryagin 4-form of $P$. (The factor  $\frac{1}{2}$  does not appear in the formula, because we have normalized the bilinear form $\left \langle -,-  \right \rangle$ with respect to $\spin n$ and not with respect to $\so n$.)

\subsection{Connections and Geometric String Structures}

\label{contriv}

We give the details of our new notion of a string connection (Definition \ref{stringcon}) as a compatible connection on a trivialization of the Chern-Simons 2-gerbe $\mathbb{CS}_P$. Then we prove Theorem \ref{threeform}, the relation between geometric string structures and 3-forms.

\begin{definition}
\label{defcompconn}
Let $\mathbb{G}$ be a bundle 2-gerbe over $M$ with connection, and let $\mathbb{T}=(\mathcal{S},\mathcal{A},\sigma)$ be a trivialization of $\mathbb{G}$. A \emph{compatible connection} $\babla$ on $\mathbb{T}$ is a connection on the bundle gerbe $\mathcal{S}$ and a  compatible connection on the isomorphism $\mathcal{A}$, such that the transformation $\sigma$ is connection-preserving.
\end{definition}

The 3-form $K_{\babla}$ we have mentioned in Section \ref{sumstringcon} is the curvature of the connection on the bundle gerbe $\mathcal{S}$. Under the bijection of Proposition \ref{stringbijection}, it thus represents the string class $\xi_{\mathbb{T}} = \mathrm{DD}(\mathcal{S})$ in real cohomology.

First we generalize Lemma \ref{trivvanish} to the setup with connection.

\begin{lemma}
\label{geomstrex}
Let $\mathbb{G}$ be a bundle 2-gerbe with connection. Then, $\mathrm{CC}(\mathbb{G})=0$ if and only if $\mathbb{G}$ admits a trivialization with compatible connection.
\end{lemma}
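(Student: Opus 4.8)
The plan is to prove both directions of the equivalence in Lemma \ref{geomstrex}, using the purely topological statement of Lemma \ref{trivvanish} as the backbone and then tracking the differential-geometric data.

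First I would treat the easy direction: if $\mathbb{G}$ admits a trivialization $\mathbb{T}$ with compatible connection $\babla$, then in particular it admits a trivialization (forget the connection), so by Lemma \ref{trivvanish} the characteristic class $\mathrm{CC}(\mathbb{G})$ vanishes. This costs nothing.

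For the converse, suppose $\mathrm{CC}(\mathbb{G})=0$. By Lemma \ref{trivvanish} there exists a trivialization $\mathbb{T}=(\mathcal{S},\mathcal{A},\sigma)$ of the underlying (connection-free) bundle 2-gerbe. The task is to upgrade this to a trivialization \emph{with compatible connection}, i.e.\ to equip $\mathcal{S}$ with a connection, $\mathcal{A}$ with a compatible connection, and to arrange that $\sigma$ becomes connection-preserving. The plan is to handle this in two stages. The curvature $F\in\Omega^4(M)$ of the connection on $\mathbb{G}$ is exact, since it represents $\mathrm{CC}(\mathbb{G})=0$ in real cohomology; choose $H\in\Omega^3(M)$ with $\mathrm{d}H=F$. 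Recalling that a connection on $\mathbb{G}$ includes a 3-form $B\in\Omega^3(Y)$ with $\pi^{*}F=\mathrm{d}B$, the 3-form $B-\pi^{*}H$ is closed along the fibres and is a candidate for the curvature $K_{\babla}$ of a connection on $\mathcal{S}$. I would first produce a connection on the bundle gerbe $\mathcal{S}$ over $Y$ whose curvature is $B-\pi^{*}H$; this is possible precisely because the Dixmier-Douady class of $\mathcal{S}$ is represented by this closed 3-form, which is the standard existence statement for gerbe connections with prescribed curvature. Once $\mathcal{S}$ carries a connection, the curvature condition $\mathrm{curv}(\nabla)=\pi_2^{*}B-\pi_1^{*}B$ for the connection $\nabla$ on $\mathcal{P}$ matches up with the defining curvature relation of $\mathcal{A}:\mathcal{P}\otimes\pi_2^{*}\mathcal{S}\to\pi_1^{*}\mathcal{S}$, so that $\mathcal{A}$ admits a compatible connection. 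Finally, since $\sigma$ is a transformation between 1-morphisms that are now all equipped with connections, one invokes the fact that any transformation can be made connection-preserving after the 1-morphism connections are suitably chosen, or (more cleanly) that the obstruction to $\sigma$ being connection-preserving is a condition that can be absorbed into the freedom in choosing the connection on $\mathcal{A}$.

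The main obstacle I anticipate is the last step: arranging the compatibilities so that they cohere simultaneously, rather than just existing one at a time. Choosing a connection on $\mathcal{S}$, then on $\mathcal{A}$, then demanding $\sigma$ be connection-preserving, is a descent problem up the layers of the 2-gerbe, and the affine freedom available at each stage (connections on gerbes form a torsor over 2-forms, compatible connections on isomorphisms a torsor over 1-forms, and so on) must be large enough to kill the successive obstructions. I expect the cleanest route is to defer the detailed construction of these connections and their mutual compatibility to the general existence result for connections on trivializations proved later in the paper, namely Proposition \ref{trivconex}, which asserts that every trivialization of every bundle 2-gerbe with connection admits a compatible connection. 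Granting that proposition, the converse direction is immediate: the trivialization $\mathbb{T}$ produced by Lemma \ref{trivvanish} admits a compatible connection, which is exactly a trivialization with compatible connection. Thus the core of the lemma reduces to the topological Lemma \ref{trivvanish} together with the existence statement Proposition \ref{trivconex}, and I would structure the proof to make precisely this reduction explicit.
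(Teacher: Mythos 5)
Your first direction (forget the connection, apply Lemma \ref{trivvanish}) matches the paper. But your converse has a genuine circularity: Proposition \ref{trivconex}, to which you reduce the whole problem, is proved in the paper (Section \ref{exclass}) by \emph{invoking Lemma \ref{geomstrex}} -- its proof begins ``by Lemma \ref{geomstrex}, there exists a trivialization $\mathbb{T}'$ of $\mathbb{G}$ with connection $\babla'$ compatible with $\nabla$,'' and then transports that connection to the given trivialization using Lemmas \ref{trivclass}, \ref{conex}, \ref{catgeomstr} and \ref{conind}. So within the paper's logical architecture you may not use Proposition \ref{trivconex} here unless you supply an independent proof of it, which your sketch does not do. Your fallback direct construction is also incomplete at exactly the decisive points: (i) to put a connection on $\mathcal{S}$ with curvature $\pm(B-\pi^{*}H)$ you need this closed 3-form to represent $\mathrm{DD}(\mathcal{S})$ in real cohomology, which is not automatic for an arbitrary $H$ with $\mathrm{d}H=F$ (whether the discrepancy, a class on $Y$ killed by $\pi_2^{*}-\pi_1^{*}$, can be absorbed into $H$ is itself a nontrivial descent question you do not address); and (ii) the simultaneous coherence of the connections on $\mathcal{S}$, on $\mathcal{A}$, and the connection-preservation of $\sigma$ -- which you yourself flag as the main obstacle -- is precisely the content of the lemma, so deferring it leaves the lemma unproved.

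For comparison, the paper's proof avoids this layer-by-layer obstruction analysis entirely. It uses Johnson's result that bundle 2-gerbes with connection are classified up to connection-compatible isomorphism by the differential cohomology group $\hat\h^4(M,\Z)$, which sits in the exact sequence $\Omega^3(M) \to \hat\h^4(M,\Z) \to \h^4(M,\Z) \to 0$. Hence $\mathrm{CC}(\mathbb{G})=0$ forces $\mathbb{G}$, with its connection, to be isomorphic (with compatible connection) to the trivial bundle 2-gerbe $\mathbb{I}$ equipped with the connection $\nabla_H$ given by some 3-form $H\in\Omega^3(M)$. The proof is then completed by the structural observation that an isomorphism $\mathbb{G}\to\mathbb{I}$ with compatible connection is literally the same data as a trivialization of $\mathbb{G}$ with compatible connection. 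If you want a proof along your lines, you would have to either reprove Proposition \ref{trivconex} without Lemma \ref{geomstrex}, or carry out the curvature-prescription and coherence steps explicitly; the differential-cohomology route is how the paper sidesteps both.
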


It is clear that if a trivialization with compatible connection is given, then $\mathrm{CC}(\mathbb{G})=0$ by Lemma \ref{trivvanish}.
We prove the converse in Section \ref{trivcon} using the fact that bundle 2-gerbes are classified by degree four differential cohomology.

Concerning the algebraic structure of trivializations with connection, we have the following lemma.

\begin{lemma}
\label{catgeomstr}
Let $\mathbb{G}$ be a bundle 2-gerbe with connection $\nabla$ and $\mathrm{CC}(\mathbb{G})=0$. 
\begin{enumerate}
\item[(i)] 
The trivializations of $\mathbb{G}$ with compatible connection form a 2-groupoid $\triv {\mathbb{G},\nabla}$. 

\item[(ii)]
The 2-groupoid $\triv{\mathbb{G},\nabla}$ is a module for the monoidal 2-groupoid $\ugrbcon M$ of bundle gerbes with connection over $M$.

\item[(iii)]
On isomorphism classes, a free and transitive action of the group $\hc 0 \ugrbcon M$ on the set $\hc 0 \triv {\mathbb{G},\nabla}$ is induced.

\end{enumerate}
\end{lemma}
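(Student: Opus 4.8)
The plan is to upgrade the proofs of Lemmata~\ref{trivgroupoid} and~\ref{trivclass} by systematically replacing, over each of the spaces $Y$, $Y^{[2]}$ and $Y^{[3]}$, the 2-groupoid $\ugrb{-}$ of bundle gerbes by the 2-groupoid $\ugrbcon{-}$ of bundle gerbes \emph{with connection}, and then to reduce statement (iii) to a descent argument. For (i), an object of $\triv{\mathbb{G},\nabla}$ is a trivialization $(\mathcal{S},\mathcal{A},\sigma)$ whose $\mathcal{S}$ carries a connection, whose $\mathcal{A}$ carries a compatible connection and whose $\sigma$ is connection-preserving, as in the definition of a compatible connection above; its 1- and 2-morphisms are those of $\triv{\mathbb{G}}$ whose underlying isomorphisms $\mathcal{B}\colon\mathcal{S}_1\to\mathcal{S}_2$ and transformations are taken in $\ugrbcon Y$ rather than $\ugrb Y$. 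All composition laws are inherited from $\triv{\mathbb{G}}$, so it remains only to check that a composite of connection-compatible data is again connection-compatible and that every such morphism is invertible. Both hold because the relevant compositions take place inside the 2-groupoids $\ugrbcon{Y^{[k]}}$, in which compatible connections are preserved and all morphisms are invertible, while the pullback 2-functors restrict from $\ugrb{-}$ to $\ugrbcon{-}$ and compose as before. Hence $\triv{\mathbb{G},\nabla}$ is a 2-groupoid, with a forgetful 2-functor to $\triv{\mathbb{G}}$.

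For (ii), I define the action exactly as in Lemma~\ref{trivclass}(i): for $\mathbb{T}=(\mathcal{S},\mathcal{A},\sigma)$ and a bundle gerbe with connection $\mathcal{K}$ over $M$, the trivialization $\mathbb{T}.\mathcal{K}$ has the bundle gerbe $\mathcal{S}\otimes\pi^{*}\mathcal{K}$ equipped with the tensor-product connection. The point that makes this compatible with connections is that $\pi\circ\pi_1=\pi\circ\pi_2$ on $Y^{[2]}$, so that $\pi_1^{*}\pi^{*}\mathcal{K}=\pi_2^{*}\pi^{*}\mathcal{K}$ \emph{as bundle gerbes with connection}, not merely as bundle gerbes. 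Hence $\mathcal{A}\otimes\id_{\pi^{*}\mathcal{K}}$ is an isomorphism with compatible connection, and, the analogous identity holding on $Y^{[3]}$, the induced transformation remains connection-preserving and compatible with $\mu$; so $\mathbb{T}.\mathcal{K}$ is again an object of $\triv{\mathbb{G},\nabla}$. Functoriality in $\mathcal{K}$ and the module coherences are inherited verbatim from the non-connection case.

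For (iii), well-definedness on isomorphism classes follows from (ii). To show that the action of $\hc 0\ugrbcon M$ is free and transitive I exhibit an inverse bijection by descent. Fix a basepoint $\mathbb{T}_1$; for any trivialization with connection $\mathbb{T}_2=(\mathcal{S}_2,\mathcal{A}_2,\sigma_2)$ form the bundle gerbe with connection $Q:=\mathcal{S}_2\otimes\mathcal{S}_1^{*}$ over $Y$, where $\mathcal{S}_1^{*}$ denotes the dual. Using $\mathcal{A}_i$ to rewrite $\pi_1^{*}\mathcal{S}_i\cong\mathcal{P}\otimes\pi_2^{*}\mathcal{S}_i$, the two factors $\mathcal{P}$ and $\mathcal{P}^{*}$ occurring in $\pi_1^{*}Q$ cancel, and $\mathcal{A}_2\otimes(\mathcal{A}_1^{*})^{-1}$ becomes a descent isomorphism $d\colon\pi_1^{*}Q\to\pi_2^{*}Q$ with compatible connection; its coherence 2-isomorphism over $Y^{[3]}$ is assembled from $\sigma_1$ and $\sigma_2$, whose compatibilities with the common associator $\mu$ cancel to yield the cocycle condition. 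Since bundle gerbes with connection satisfy descent along the surjective submersion $\pi$, the datum $(Q,d)$ descends to a bundle gerbe with connection $\Phi(\mathbb{T}_2):=\mathcal{K}$ over $M$, well defined up to isomorphism. Tracing the isomorphisms back gives $\mathbb{T}_2\cong\mathbb{T}_1.\Phi(\mathbb{T}_2)$; and applying the construction to $\mathbb{T}_2=\mathbb{T}_1.\mathcal{K}$ gives $Q\cong\pi^{*}\mathcal{K}$ with $d$ the canonical descent isomorphism, whence $\Phi(\mathbb{T}_1.\mathcal{K})\cong\mathcal{K}$ and, using $\Phi(\mathbb{T}_1)\cong\trivlin$, freeness. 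Thus $\Phi$ is a two-sided inverse to the action, which is therefore free and transitive.

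The main obstacle is precisely this descent step. In the non-connection case of Lemma~\ref{trivclass}(ii) one could avoid it, since both sides were already known to be $\h^3(M,\Z)$-torsors; here no such shortcut is available, and in particular freeness cannot be deduced from injectivity of $\pi^{*}$, because pullback along a surjective submersion is generally \emph{not} injective on $\hc 0\ugrbcon M\cong\hat\h^3(M,\Z)$ (flat classes can become trivial on $Y$). The real work therefore lies in verifying that the descent isomorphism $d$ together with its coherence satisfies the conditions required for descent of bundle gerbes \emph{with connection}, carrying the connection data along at every stage, so that $\Phi$ is well defined on isomorphism classes and genuinely inverts the action.
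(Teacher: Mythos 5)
Your proposal is correct, and for parts (i), (ii) and the transitivity half of (iii) it is exactly the paper's argument: the paper dispatches this lemma in one sentence, calling it a straightforward generalization of Lemmata \ref{trivgroupoid} and \ref{trivclass} and pointing to Remarks \ref{catgens} and \ref{actgens}, i.e.\ one repeats the constructions of Sections \ref{triv2cat} and \ref{bundlegerbesact} inside the 2-stack of bundle gerbes \emph{with} connection --- the module structure via $\mathcal{S}\otimes\pi^{*}\mathcal{K}$ with the tensor-product connection, and transitivity by descending $\mathcal{S}_1^{*}\otimes\mathcal{S}_2$ with the descent data built from $\mathcal{A}_i$ and $\sigma_i$, just as you do.

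Where you genuinely depart from the paper is freeness, and your route is the more robust one. The paper's freeness argument in Section \ref{bundlegerbesact} extracts from a 1-morphism $\mathbb{T}.\mathcal{K}\to\mathbb{T}$ only the equation $\pi^{*}\mathrm{DD}(\mathcal{K})=0$ and concludes $\mathrm{DD}(\mathcal{K})=0$ \quot{since $\pi$ is a covering}; Remark \ref{actgens} would have you transfer this verbatim, which requires $\pi^{*}\colon\hat\h^3(M,\Z)\to\hat\h^3(Y,\Z)$ to be injective. As you correctly observe, this fails for a general surjective submersion: a flat gerbe with nontrivial holonomy class in $\h^2(M,\ueins)$ can pull back to a gerbe that is trivializable as a gerbe with connection on $Y$ (already a torus and its universal cover give an example). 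Your alternative --- feeding the \emph{entire} 1-morphism, not merely its underlying gerbe isomorphism, into the descent machinery to obtain an inverse $\Phi$ with $\Phi(\mathbb{T}_1.\mathcal{K})\cong\mathcal{K}$ and $\Phi(\mathbb{T}_1)\cong\trivlin$ --- is exactly what is needed, and it in fact also repairs the non-connection case, since pullback along a surjective submersion (e.g.\ a disjoint union of contractible opens) need not be injective on $\h^3(M,\Z)$ either. Two minor caveats on your closing paragraph: the paper does \emph{not} avoid descent in the non-connection case (transitivity there is proved by the same gluing argument you use), and the shortcut you allude to --- deducing freeness from equivariance together with the Stolz--Teichner torsor structure on string structures --- is available only for $\mathbb{G}=\mathbb{CS}_P$, whereas Lemma \ref{trivclass} and the present lemma concern an arbitrary bundle 2-gerbe. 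Neither point affects the validity of your proof.
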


This lemma is a straightforward generalization of our previous results (Lemmata \ref{trivgroupoid} and \ref{trivclass}) on trivializations without connection in the sense that all constructions are literally the same; see Remarks \ref{catgens} and \ref{actgens}.  

We  recall from  \cite[Theorem 4.1]{murray2} that bundle gerbes with connection are classified by degree three differential cohomology,
\begin{equation}
\label{grbconclass}
\hc 0 \ugrbcon M \cong \hat\h^3(M,\Z)\text{.}
\end{equation}
Under this identification, the two maps
\begin{equation*}
\mathrm{pr}: \hat \h^3(M,\Z) \to \h^3(M,\Z)
\quad\text{ and }\quad
\Omega: \hat \h^3(M,\Z) \to \Omega_{\mathrm{cl}}^3(M)
\end{equation*}
are given by the Dixmier-Douady class of the bundle gerbe, and the curvature of its connection, respectively. Together with Lemma \ref{catgeomstr}, this implies Corollaries \ref{geomstrcat} and \ref{geomstrtorsor}.

Now we are heading towards the proof of Theorem \ref{threeform}, the relation between geometric string structures and 3-forms.

\begin{lemma}
\label{3form}
Let $\mathbb{G}$ be a bundle 2-gerbe with covering $\pi:Y \to M$ and a connection with 3-form $B\in\Omega^3(Y)$. Let $\mathbb{T}=(\mathcal{S},\mathcal{A},\sigma)$ be a trivialization of $\mathbb{G}$ with compatible connection $\babla$, the connection on the bundle gerbe $\mathcal{S}$ denoted by $\nabla$. Then, there exists a unique 3-form $H_{\babla}$ on $M$ such that
\begin{equation*}
\pi^{*}H_{\babla} = \mathrm{curv}(\nabla) + B\text{.}
\end{equation*}
\end{lemma}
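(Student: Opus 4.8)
The plan is to prove existence and uniqueness of $H_{\babla}$ by descent along the covering $\pi:Y \to M$. A 3-form on $Y$ descends to a (necessarily unique) 3-form on $M$ precisely when its two pullbacks to the fibre product $Y^{[2]}$ agree, so the whole statement reduces to verifying that the 3-form $\omega := \mathrm{curv}(\nabla) + B \in \Omega^3(Y)$ satisfies $\pi_1^{*}\omega = \pi_2^{*}\omega$ on $Y^{[2]}$. Uniqueness is immediate from the fact that $\pi$ is a surjective submersion, so $\pi^{*}$ is injective on forms; thus the only real content is the descent equation.

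First I would write out the two pullbacks separately. Applying $\pi_i^{*}$ and using that pullback commutes with the curvature functor, the descent condition becomes
\begin{equation*}
\mathrm{curv}(\pi_2^{*}\nabla) - \mathrm{curv}(\pi_1^{*}\nabla) = \pi_1^{*}B - \pi_2^{*}B\text{.}
\end{equation*}
The key structural input is the isomorphism $\mathcal{A}: \mathcal{P} \otimes \pi_2^{*}\mathcal{S} \to \pi_1^{*}\mathcal{S}$ of bundle gerbes over $Y^{[2]}$ which is part of the trivialization $\mathbb{T}$. Since $\babla$ is a \emph{compatible} connection, $\mathcal{A}$ carries a compatible connection, and the basic fact about isomorphisms with compatible connection between bundle gerbes is that the two sides have equal curvature. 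Reading off the curvatures of source and target of $\mathcal{A}$ gives
\begin{equation*}
\mathrm{curv}(\nabla_{\mathcal{P}}) + \mathrm{curv}(\pi_2^{*}\nabla) = \mathrm{curv}(\pi_1^{*}\nabla)\text{,}
\end{equation*}
where $\nabla_{\mathcal{P}}$ is the connection on $\mathcal{P}$ supplied by the connection on $\mathbb{G}$. Now I substitute the defining curvature relation \erf{condcurv} for a connection on a bundle 2-gerbe, namely $\mathrm{curv}(\nabla_{\mathcal{P}}) = \pi_2^{*}B - \pi_1^{*}B$. Rearranging yields exactly the descent equation above, completing the argument.

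Concretely the proof is three short moves: invoke the curvature-matching property of the isomorphism $\mathcal{A}$ with its compatible connection, substitute the curvature condition \erf{condcurv} defining the connection on $\mathbb{G}$, and conclude that $\mathrm{curv}(\nabla)+B$ descends. I would then cite the standard descent lemma for differential forms along surjective submersions to produce the unique $H_{\babla}$ with $\pi^{*}H_{\babla} = \mathrm{curv}(\nabla)+B$.

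The step I expect to require the most care is the precise statement that a connection-compatible isomorphism of bundle gerbes forces equality of the curvatures of its source and target — this is the one place where the word \emph{compatible} in the hypothesis is doing essential work, and I would want to pin it to the relevant definition or citation (the curvature of an isomorphism with compatible connection, in the sense of Section \ref{cons} and \cite{waldorf1}) rather than treat it as obvious. Everything else is bookkeeping with pullbacks and the defining relation \erf{condcurv}, plus the routine descent principle for forms along a surjective submersion.
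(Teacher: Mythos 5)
Your proposal is correct and follows essentially the same route as the paper's proof: both verify that $\mathrm{curv}(\nabla)+B$ has equal pullbacks to $Y^{[2]}$ by combining the curvature equality forced by the compatible connection on $\mathcal{A}$ with the defining condition \erf{condcurv}, and then invoke the descent sequence (Lemma \ref{exact}) for existence and uniqueness of $H_{\babla}$. The step you flagged as needing care is exactly the one the paper also singles out, namely that a compatible connection on $\mathcal{A}$ implies equality of the curvatures of its source and target bundle gerbes.
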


\begin{proof}
We can prove this right away.  For the 3-form 
\begin{equation*}
C:= \mathrm{curv}(\nabla) + B \in \Omega^3(Y)\text{,}
\end{equation*}
we compute
\begin{eqnarray*}
\pi_2^{*}C- \pi_1^{*}C &=& (\pi_2^{*}\mathrm{curv}(\nabla) - \pi_1^{*}\mathrm{curv}(\nabla)) + (\pi_2^{*}B - \pi_1^{*}B)
\\&=& \mathrm{curv}(\mathcal{P}) - \mathrm{curv}(\mathcal{P})
\\&=& 0\text{.}
\end{eqnarray*}
Here we have used condition \erf{condcurv} and that the isomorphism
\begin{equation*}
\mathcal{A}: \mathcal{P} \otimes \pi_2^{*}\mathcal{S} \to \pi_1^{*}\mathcal{S}
\end{equation*}
is equipped with a compatible connection (which implies the equality of the curvatures of the target and the source bundle gerbes).
The  computation means (see the exact sequence in Lemma \ref{exact}), that $C$ is the pullback of a unique 3-form $H_{\babla}$ along $\pi:Y \to M$. 
\end{proof}

The 3-form $H_{\babla}$ depends only on the isomorphism class of $\mathbb{T}$ in $\triv {\mathbb{G},\nabla}$, since the curvature of $\mathcal{S}$ does so. Thus, the proof of Theorem \ref{threeform} is finished with the following calculation, which follows directly from the definitions.

\begin{lemma}
\label{3form2}
Let $\mathbb{G}$ be a bundle 2-gerbe with connection, and let $\mathbb{T}$ be a trivialization with connection $\babla$. The 3-form $H_{\babla}$ has the following properties:
\begin{enumerate}
\item
$\mathrm{d}H_{\babla} = \mathrm{curv}(\mathbb{G})$,
\item
$H_{\babla.\mathcal{K}} = \mathrm{curv}(\mathcal{K}) +  H_{\babla}$,
\end{enumerate}
where $\mathcal{K}$ is a bundle gerbe with connection over $M$, and $\babla.\mathcal{K}$ the connection on $\mathbb{T}.\mathcal{K}$ for the action from Lemma \ref{catgeomstr} (ii).
\end{lemma}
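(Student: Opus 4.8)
The plan is to prove both identities by pulling everything back along the covering $\pi:Y \to M$ and exploiting the injectivity of $\pi^{*}$ on differential forms, which holds because $\pi$ is a surjective submersion. In both cases the starting point is the defining equation $\pi^{*}H_{\babla} = \mathrm{curv}(\nabla) + B$ supplied by Lemma \ref{3form}, where $\nabla$ is the connection on the bundle gerbe $\mathcal{S}$ and $B$ is the connection 3-form of $\mathbb{G}$.

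For part (i), I would apply $\mathrm{d}$ to the defining equation. Since $\pi^{*}$ commutes with $\mathrm{d}$, this gives $\pi^{*}\mathrm{d}H_{\babla} = \mathrm{d}\,\mathrm{curv}(\nabla) + \mathrm{d}B$. The first summand vanishes because the curvature of any bundle gerbe connection is a closed 3-form. For the second summand I would invoke the definition of the curvature of a bundle 2-gerbe connection recalled earlier in the text, namely that $\mathrm{curv}(\mathbb{G})$ is the unique 4-form on $M$ satisfying $\pi^{*}\mathrm{curv}(\mathbb{G}) = \mathrm{d}B$. Combining these yields $\pi^{*}\mathrm{d}H_{\babla} = \pi^{*}\mathrm{curv}(\mathbb{G})$, and injectivity of $\pi^{*}$ finishes the claim.

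For part (ii), I would first recall from Lemma \ref{catgeomstr} that the trivialization $\babla.\mathcal{K}$ has underlying bundle gerbe $\mathcal{S} \otimes \pi^{*}\mathcal{K}$, equipped with the tensor product connection. Since the curvature of a tensor product of bundle gerbes with connection is the sum of the two curvatures, the connection attached to $\babla.\mathcal{K}$ has curvature $\mathrm{curv}(\nabla) + \pi^{*}\mathrm{curv}(\mathcal{K})$. Applying Lemma \ref{3form} to $\babla.\mathcal{K}$ and substituting would then give
\[
\pi^{*}H_{\babla.\mathcal{K}} = \mathrm{curv}(\nabla) + \pi^{*}\mathrm{curv}(\mathcal{K}) + B = \pi^{*}\left(\mathrm{curv}(\mathcal{K}) + H_{\babla}\right),
\]
where the last equality uses the defining equation for $H_{\babla}$ a second time. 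Injectivity of $\pi^{*}$ then yields the stated formula.

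The only point requiring mild care is the behavior of connections under the module action of Lemma \ref{catgeomstr}: specifically, that the connection on $\mathcal{S}\otimes\pi^{*}\mathcal{K}$ induced by the action is indeed the tensor product connection, and that its curvature is additive. Both are standard facts in bundle gerbe theory, so I expect no genuine obstacle; as the lemma statement itself indicates, the result is essentially a bookkeeping consequence of Lemma \ref{3form}, the closedness of bundle gerbe curvatures, and the additivity of curvature under tensor products.
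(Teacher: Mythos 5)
Your proposal is correct and is exactly the computation the paper has in mind: the paper gives no explicit proof, stating only that the lemma ``follows directly from the definitions,'' and your argument — applying $\mathrm{d}$ to the defining equation $\pi^{*}H_{\babla}=\mathrm{curv}(\nabla)+B$ and using $\pi^{*}\mathrm{curv}(\mathbb{G})=\mathrm{d}B$ for part (i), then additivity of curvature under tensor product plus injectivity of $\pi^{*}$ for part (ii) — is precisely that direct unwinding of the definitions. The one point you flag, that the action of Lemma \ref{catgeomstr} (ii) equips $\mathcal{S}\otimes\pi^{*}\mathcal{K}$ with the tensor product connection, is indeed how the paper sets up the action (Remark \ref{actgens}), so there is no gap.
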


\subsection{Existence and Classification of compatible Connections on Trivializations}

\label{exclass}

Concerning our results on string connections, we are left with the proof of  Theorem \ref{stringconcon}. In fact a more general statement is true for  trivializations of any bundle 2-gerbe. The first part is as follows.

\begin{proposition}
\label{trivconex}
Suppose $\mathbb{T}$ is a trivialization of a bundle 2-gerbe $\mathbb{G}$, and suppose $\nabla$ is a connection on $\mathbb{G}$. Then, there exists a connection $\blacktriangledown$ on $\mathbb{T}$ compatible with $\nabla$. \end{proposition}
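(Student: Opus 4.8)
The plan is to avoid constructing the connection by hand and instead to reduce the statement to the existence result already in hand for \emph{some} trivialization (Lemma~\ref{geomstrex}), and then transport the connection onto the given $\mathbb{T}$ using the torsor structures of Lemmata~\ref{trivclass} and~\ref{catgeomstr}. First, since $\mathbb{T}$ is a trivialization of $\mathbb{G}$, Lemma~\ref{trivvanish} gives $\mathrm{CC}(\mathbb{G})=0$, so Lemma~\ref{geomstrex} produces a trivialization $\mathbb{T}_0$ carrying a compatible connection $\babla_0$. Now consider the forgetful $2$-functor $\triv{\mathbb{G},\nabla}\to\triv{\mathbb{G}}$. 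On isomorphism classes it is a map $\hc 0 \triv{\mathbb{G},\nabla}\to\hc 0 \triv{\mathbb{G}}$ of torsors, equivariant along the group homomorphism $\hc 0 \ugrbcon M\to\hc 0 \ugrb M$, which under the identifications $\hc 0 \ugrbcon M\cong\hat\h^3(M,\Z)$ and $\hc 0 \ugrb M\cong\h^3(M,\Z)$ is exactly the projection $\mathrm{pr}$ of the commutative diagram recalled in Section~\ref{resstringcon}. Since $\mathrm{pr}$ is surjective and the source torsor is nonempty (by the existence of $\babla_0$), this map of torsors is surjective; hence the isomorphism class of $\mathbb{T}$ lies in its image. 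Concretely, writing $\mathbb{T}\cong\mathbb{T}_0.\mathcal{K}$ for a bundle gerbe $\mathcal{K}$ over $M$ via Lemma~\ref{trivclass}(ii), and equipping $\mathcal{K}$ with any connection, Lemma~\ref{catgeomstr}(ii) realizes this class by the trivialization-with-connection $\mathbb{T}_0.\mathcal{K}$. Either way I obtain a trivialization $\mathbb{T}'$ with compatible connection $\babla'$ together with a $1$-isomorphism $\Phi:\mathbb{T}'\to\mathbb{T}$ in $\triv{\mathbb{G}}$.

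The remaining step is to move the connection across $\Phi$, and this is where I expect the real work to sit. The $1$-isomorphism $\Phi$ contains an isomorphism of bundle gerbes $\mathcal{B}:\mathcal{S}'\to\mathcal{S}$ over $Y$ together with its coherence data, but it carries no connection a priori. Because $\mathcal{S}'\cong\mathcal{S}$ have the same Dixmier-Douady class, I can choose a connection on $\mathcal{S}$ whose curvature equals $\mathrm{curv}(\nabla_{\mathcal{S}'})$, so that source and target of $\mathcal{B}$ have equal curvature and $\mathcal{B}$ admits a compatible connection by the standard theory of morphisms of bundle gerbes with connection (cf.\ \cite{waldorf1}). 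Pulling the compatible connection on $\mathcal{A}'$ and the transformation $\sigma'$ through $\Phi$ then equips $\mathcal{A}$ with a compatible connection and renders $\sigma$ connection-preserving; the collected data form a connection $\babla$ on $\mathbb{T}$ compatible with $\nabla$. The point requiring care is precisely that \emph{every} bare $1$-isomorphism can be promoted to one carrying compatible connection data once curvatures are matched, so that transport is possible at all three levels ($\mathcal{S}$, $\mathcal{A}$, $\sigma$) coherently.

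For completeness I would also record the fully constructive alternative, which shows why the torsor argument is preferable. One picks any connection on $\mathcal{S}$ (such connections always exist on a bundle gerbe over a manifold), then adjusts its curving by a $2$-form on $Y$ so that $B+\mathrm{curv}(\nabla_{\mathcal{S}})$ descends along $\pi:Y\to M$; by the curvature computation of Lemma~\ref{3form} this descent is exactly the condition that the source $\mathcal{P}\otimes\pi_2^{*}\mathcal{S}$ and target $\pi_1^{*}\mathcal{S}$ of $\mathcal{A}$ acquire equal curvature, so that a compatible connection on $\mathcal{A}$ exists; one then solves for the further correction making $\sigma$ connection-preserving. The solvability of these successive steps is controlled by the exactness of the fundamental complex $\Omega^{\bullet}(M)\to\Omega^{\bullet}(Y)\to\Omega^{\bullet}(Y^{[2]})\to\cdots$ referenced in \erf{exact}, but extracting the correction forms requires a double-complex descent that is routine yet lengthy, whereas the torsor argument dispatches it by invoking the already-established Lemmata~\ref{geomstrex} and~\ref{catgeomstr}.
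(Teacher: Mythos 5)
Your proposal follows the same architecture as the paper's proof: Lemma \ref{trivvanish} gives $\mathrm{CC}(\mathbb{G})=0$, Lemma \ref{geomstrex} produces \emph{some} trivialization with compatible connection, the torsor structure of Lemma \ref{trivclass} (ii) together with Lemma \ref{conex} and the action of Lemma \ref{catgeomstr} (ii) yields a trivialization with compatible connection that is isomorphic to the given $\mathbb{T}$, and the connection is then transported across a 1-morphism. The paper isolates this last step as Lemma \ref{conind}, proved in Section \ref{conpullback} via Lemmata \ref{conpull} and \ref{compconpullback}; this transport step is where the real work sits, and it is exactly here that your argument has a genuine gap.

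The gap is your claim that, once a connection on $\mathcal{S}$ is chosen with curvature equal to $\mathrm{curv}(\nabla_{\mathcal{S}'})$, the isomorphism $\mathcal{B}:\mathcal{S}'\to\mathcal{S}$ \quot{admits a compatible connection by the standard theory}. Equality of curvatures is necessary but \emph{not} sufficient for a given isomorphism to admit a connection compatible with prescribed connections on source and target. Concretely: automorphisms of $\mathcal{S}$ correspond, up to transformation, to principal $\ueins$-bundles over $Y$, and such an automorphism admits a connection compatible with the same gerbe connection on both sides only if that bundle admits a flat connection, i.e.\ only if its Chern class is torsion. Composing an isomorphism $\mathcal{S}'\to\mathcal{S}$ that does admit a compatible connection with an automorphism given by a non-torsion bundle produces an isomorphism between the same gerbes-with-connection (so with matching curvatures) that admits no compatible connection at all. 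Since the 1-morphism $\Phi$ handed to you by the torsor argument is not under your control, you cannot exclude this, and having pinned down the connection on $\mathcal{S}$ by a curvature prescription you have no freedom left to absorb the discrepancy. The paper's Lemma \ref{conpull} avoids precisely this trap: it prescribes nothing in advance, but starts from \emph{arbitrary} connections on the gerbe and on the isomorphism, first corrects the connection on the isomorphism by a 1-form $\gamma$ (solvable because $\delta\beta=0$ by Lemma \ref{exact}), and then corrects the gerbe connection by a 2-form pulled back from the base so that the curvature condition holds -- the induced gerbe connection is \emph{constructed}, not chosen beforehand. The same flaw recurs in your \quot{constructive alternative}, which again asserts that matching curvatures of the source and target of $\mathcal{A}$ implies existence of a compatible connection. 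To repair the proof, replace the curvature-matching step by an appeal to Lemma \ref{conind}, applied to $\Phi^{-1}:\mathbb{T}\to\mathbb{T}'$ (that lemma transports connections from the target of a 1-morphism to its source); with this substitution your argument coincides with the paper's.
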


For $\mathbb{G}=\mathbb{CS}_P$, Proposition \ref{trivconex} implies the first part of Theorem \ref{stringconcon}. Its proof requires two lemmata.

\begin{lemma}[{\cite[Section 6]{murray}}]
\label{conex}
Let $\mathcal{K}$ be a bundle gerbe over $M$. Then, $\mathcal{K}$ admits a connection.
\end{lemma}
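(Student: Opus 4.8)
The plan is to prove Lemma \ref{conex} (the statement that every bundle gerbe $\mathcal{K}$ over $M$ admits a connection) by reducing it, via the underlying geometry of a bundle gerbe, to the existence of connections on ordinary principal $\ueins$-bundles. Recall that a bundle gerbe $\mathcal{K}$ consists of a covering $\pi:Y \to M$, a principal $\ueins$-bundle $L$ over $Y^{[2]}$, and a bundle gerbe multiplication, i.e.\ an isomorphism $\mu:\pi_{12}^{*}L \otimes \pi_{23}^{*}L \to \pi_{13}^{*}L$ over $Y^{[3]}$ satisfying an associativity condition over $Y^{[4]}$. A connection on $\mathcal{K}$ consists of two pieces of data: a connection $\nabla_{\!L}$ on the bundle $L$ that is compatible with $\mu$ (a so-called \emph{bundle gerbe connection}), together with a \emph{curving}, a 2-form $f \in \Omega^2(Y)$ whose difference $\pi_2^{*}f - \pi_1^{*}f$ equals the curvature of $\nabla_{\!L}$. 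So the proof splits naturally into producing these two ingredients.

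First I would produce the bundle gerbe connection. Since $Y^{[2]}$ is a finite-dimensional smooth manifold, the principal $\ueins$-bundle $L$ admits \emph{some} connection by the standard partition-of-unity argument for ordinary bundles. The subtlety is that an arbitrary connection on $L$ need not be compatible with the multiplication $\mu$. To fix this, I would choose a connection $\nabla_0$ on $L$ arbitrarily and measure its failure to be multiplicative: the multiplication $\mu$ pulls the tensor-product connection back to $\pi_{13}^{*}\nabla_0$ up to an error 1-form on $Y^{[3]}$. One then invokes the fact that the relevant complex computing such data — the complex built from the simplicial covering $Y^{[\bullet]}$ with the alternating-sum differential $\delta$ — is exact in positive degrees (this is the standard ``fundamental complex'' / generalized Mayer--Vietoris argument, relying again on partitions of unity subordinate to the submersion $\pi$). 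The error cocycle is a $\delta$-coboundary, and subtracting an appropriate correction 1-form from $\nabla_0$ yields a genuine bundle gerbe connection $\nabla_{\!L}$ compatible with $\mu$.

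Next I would produce the curving. Let $F \in \Omega^2(Y^{[2]})$ denote the curvature of the multiplicative connection $\nabla_{\!L}$ just constructed. Compatibility of $\nabla_{\!L}$ with $\mu$ forces $F$ to be a $\delta$-cocycle, that is $\delta F = \pi_{23}^{*}F - \pi_{13}^{*}F + \pi_{12}^{*}F = 0$ in $\Omega^2(Y^{[3]})$. By exactness of the same fundamental complex in the relevant degree, any closed cocycle of this type is a coboundary, so there exists a 2-form $f \in \Omega^2(Y)$ with $\delta f = \pi_2^{*}f - \pi_1^{*}f = F$. This $f$ is exactly a curving, and the pair $(\nabla_{\!L}, f)$ constitutes a connection on $\mathcal{K}$, completing the proof.

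The main obstacle, and the only step where anything beyond routine bundle theory enters, is the exactness of the fundamental complex of the covering $\pi:Y\to M$ in the degrees used above. This is precisely the technical engine (the partition-of-unity argument producing $\delta$-primitives for both the connection-correction 1-form and the curvature 2-form) and it is what makes both the multiplicativity of the connection and the existence of the curving work; I would cite the standard treatment of this complex for surjective submersions (as in \cite{murray} or \cite{waldorf1}) rather than re-derive it, since the lemma is attributed to Murray's original construction \cite{murray}. Everything else reduces to the elementary fact that ordinary $\ueins$-bundles over finite-dimensional manifolds admit connections.
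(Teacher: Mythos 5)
Your proof is correct and takes essentially the same approach as the paper's: the paper offers no argument of its own but cites Murray (\cite{murray}, Sec.~6), and your proof---choose an arbitrary connection on the $\ueins$-bundle over $Y^{[2]}$, correct it to one compatible with $\mu$ using exactness of the fundamental complex (the paper's Lemma~\ref{exact}), then solve $\pi_2^{*}f-\pi_1^{*}f=F$ for the curving by the same exactness---is precisely that cited argument. The only step you leave implicit is that associativity of $\mu$ over $Y^{[4]}$ is what makes the error 1-form a $\delta$-cocycle, but this is routine.
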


\begin{lemma}
\label{conind}
Let $\mathbb{G}$ be a bundle 2-gerbe, let $\mathbb{T}_1$ and $\mathbb{T}_2$ be trivializations and let $\mathbb{B}:\mathbb{T}_1 \to \mathbb{T}_2$ be a 1-morphism. Let $\nabla$ be a connection on $\mathbb{G}$, and let $\babla$ be a connection on $\mathbb{T}_2$ compatible with $\nabla$. Then, there exists a connection on $\mathbb{T}_1$ compatible with $\nabla$ such that $\mathbb{B}$ becomes a 1-morphism in $\triv{\mathbb{G},\nabla}$. 
\end{lemma}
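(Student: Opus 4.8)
The plan is to transport the given compatible connection $\babla$ on $\mathbb{T}_2$ backwards along the $1$-morphism $\mathbb{B}$. Recall that $\mathbb{B}$ consists of an isomorphism $\mathcal{B}:\mathcal{S}_1 \to \mathcal{S}_2$ of bundle gerbes over $Y$, together with a transformation $\beta$ over $Y^{[2]}$ relating $\mathcal{A}_1$ and $\mathcal{A}_2$ through $\mathcal{B}$, subject to a compatibility condition with $\sigma_1$ and $\sigma_2$ over $Y^{[3]}$. Writing $\babla=(\nabla_2,\mathbf{a}_2)$ with $\nabla_2$ a connection on $\mathcal{S}_2$ and $\mathbf{a}_2$ a compatible connection on $\mathcal{A}_2$, the goal is to produce a connection $\nabla_1$ on $\mathcal{S}_1$ and a compatible connection $\mathbf{a}_1$ on $\mathcal{A}_1$ such that $\sigma_1$ is connection-preserving, and simultaneously to equip $\mathcal{B}$ with a compatible connection and to make $\beta$ connection-preserving, so that $\mathbb{B}$ is promoted to a $1$-morphism in $\triv{\mathbb{G},\nabla}$.

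First I would transport the connection along $\mathcal{B}$. By Lemma \ref{conex} the gerbe $\mathcal{S}_1$ admits some connection; since $\mathcal{B}$ is an isomorphism we have $\mathrm{DD}(\mathcal{S}_1)=\mathrm{DD}(\mathcal{S}_2)$, so its curvature and $\mathrm{curv}(\nabla_2)$ represent the same class in $\h^3(Y,\R)$ and hence differ by an exact form. Shifting the curving by a suitable $2$-form on $Y$ yields a connection $\nabla_1$ on $\mathcal{S}_1$ with $\mathrm{curv}(\nabla_1)=\mathrm{curv}(\nabla_2)$ exactly. With the curvatures now equal, the standard theory of bundle gerbes with connection lets me equip the isomorphism $\mathcal{B}$ with a compatible connection. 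Next I would transport $\mathbf{a}_2$. Pulling back the compatible connection on $\mathcal{B}$ along $\pi_1,\pi_2$ stays compatible, so the composite $(\pi_1^{*}\mathcal{B})^{-1}\circ \mathcal{A}_2\circ(\id\otimes\pi_2^{*}\mathcal{B})$ over $Y^{[2]}$ inherits a compatible connection from $\mathbf{a}_2$. Since $\mathrm{curv}(\nabla_1)=\mathrm{curv}(\nabla_2)$ and $\babla$ is compatible with $\nabla$, the curvature condition relating $\mathcal{P}$, $\pi_2^{*}\mathcal{S}_1$ and $\pi_1^{*}\mathcal{S}_1$ holds automatically, so $\mathcal{A}_1$ is eligible to carry a compatible connection; I define $\mathbf{a}_1$ to be the one obtained by transporting the connection on this composite along the invertible transformation $\beta$. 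This is the unique compatible connection on $\mathcal{A}_1$ for which $\beta$ is connection-preserving.

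It remains to check that $\sigma_1$ is connection-preserving; this is the main obstacle, as it is the only condition that is not true by construction. The argument is a diagram chase over $Y^{[3]}$ built on the defining compatibility of the $1$-morphism $\mathbb{B}$, which expresses a prescribed pasting of $\sigma_1$, $\sigma_2$, $\mu$, and the three pullbacks of $\beta$ as an identity of transformations. In this identity $\sigma_2$ is connection-preserving by hypothesis, $\mu$ is connection-preserving because $\nabla$ is a connection on $\mathbb{G}$, and the pullbacks of $\beta$ are connection-preserving by the previous step, since pullback preserves compatibility. As all the other $2$-morphisms appearing are connection-preserving and invertible, the identity forces $\sigma_1$ to be connection-preserving as well. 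Hence $\babla':=(\nabla_1,\mathbf{a}_1)$ is a genuine compatible connection on $\mathbb{T}_1$, and by construction $\mathbb{B}$ together with the compatible connection on $\mathcal{B}$ and the now connection-preserving $\beta$ is a $1$-morphism in $\triv{\mathbb{G},\nabla}$, which proves the lemma.
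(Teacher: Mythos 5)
Your overall architecture is the same as the paper's: transport a connection across $\mathcal{B}$, then transport the connection on $\mathcal{A}_2$ along the transformation built from $\beta$ to obtain one on $\mathcal{A}_1$, and finally deduce that $\sigma_1$ is connection-preserving by inverting the compatibility diagram of Figure \ref{compmorph}. Your second and third steps are sound --- they are exactly Lemma \ref{compconpullback} and the paper's rearrangement argument. The gap is in your first step. You fix a connection $\nabla_1$ on $\mathcal{S}_1$ in advance, chosen only so that $\mathrm{curv}(\nabla_1)=\mathrm{curv}(\nabla_2)$, and then assert that ``the standard theory'' provides a connection on the \emph{fixed} isomorphism $\mathcal{B}$ compatible with $\nabla_1$ and $\nabla_2$. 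There is no such result, and the assertion is false in general: equality of curvature $3$-forms is necessary but not sufficient. By Definition \ref{conpres}, a compatible connection on $\mathcal{B}$ must have curvature equal to $\zeta^{*}(C_2-C_1)$ on the fixed principal $\ueins$-bundle $Q$ of $\mathcal{B}$, and connections on a fixed bundle can only realize curvatures in one de Rham class. Concretely: take $\mathcal{S}_1=\mathcal{S}_2=\mathcal{I}$ the trivial gerbe over $Y$, $\mathcal{B}=\id_{\mathcal{I}}$ (so $Q$ is trivial), and $C_2=0$; your procedure allows $C_1$ to be any closed $2$-form, and compatibility would then demand a globally defined $1$-form $A$ with $\mathrm{d}A=-C_1$, which is impossible whenever $C_1$ is closed but not exact. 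The obstruction you are ignoring is a flat one: two gerbes with connection can differ by a class in $\h^2(Y,\R)$ modulo integral classes even though their Dixmier--Douady classes and curvatures agree.

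The repair is the paper's Lemma \ref{conpull}, whose logical shape is different from what you use: the connection on the source gerbe is an \emph{output} of the construction, not an input. One chooses arbitrary connections on $\mathcal{S}_1$ (Lemma \ref{conex}) and on the bundle $Q$ of $\mathcal{B}$, and corrects both using the exactness of the $\delta$-complex (Lemma \ref{exact}): the failure of the isomorphism $\alpha$ of $\mathcal{B}$ to preserve the chosen connections is a $1$-form with vanishing alternating sum, hence of the form $\delta\gamma$, and shifting the connection on $Q$ by $\gamma$ makes $\alpha$ connection-preserving; after this, the defect in the curvature condition is a $2$-form whose alternating sum vanishes, hence it is pulled back from the base, and absorbing it into the curving of $\mathcal{S}_1$ via Lemma \ref{affine} finishes the construction. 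With your first step replaced by this simultaneous correction, the remainder of your argument goes through as written.
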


The proof of this lemma is carried out in Section \ref{conpullback}.
Now we give the proof of the above proposition.\medskip 

\begin{proofblank}{Proof of Proposition \ref{trivconex}} 
Since $\mathbb{G}$ has by assumption the trivialization $\mathbb{T}$, we have $\mathrm{CC}(\mathbb{G})=0$ by Lemma \ref{trivvanish}. Hence, by Lemma \ref{geomstrex},  there exists a trivialization $\mathbb{T}'$ of $\mathbb{G}$ with connection $\babla'$ compatible with $\nabla$. Of course $\mathbb{T}'$ is not necessarily equal or isomorphic to the given trivialization $\mathbb{T}$, but by Lemma \ref{trivclass} (ii) the  isomorphism classes of trivializations of $\mathbb{G}$ form a torsor over $\hc 0 \ugrb M$. Thus, there exists a bundle gerbe $\mathcal{K}$ over $M$ and a 1-morphism $\mathbb{B}:\mathbb{T} \to \mathcal{K}.\mathbb{T}'$. According to Lemma \ref{conex} every bundle gerbe admits a connection; so we may choose one on $\mathcal{K}$. Now we use the action of Lemma \ref{catgeomstr} (ii) according to which $\mathcal{K}.\mathbb{T}'$ also has a compatible connection. Finally, by Lemma \ref{conind}, the 1-morphism $\mathbb{B}$ induces a compatible connection on $\mathbb{T}$.
\end{proofblank}

Now we want to describe the space  of compatible connections on a fixed trivialization $\mathbb{T}=(\mathcal{S},\mathcal{A},\sigma)$ of a bundle 2-gerbe $\mathbb{G}$ with a covering $\pi:Y \to M$. In order to make the following statements, we have to infer that part of the structure of the bundle gerbe $\mathcal{S}$ over $Y$ is another covering $\omega:W \to Y$ (see Definition \ref{bundlegerbe}). The following vector space $V_{\mathbb{T}}$ associated to the trivialization $\mathbb{T}$ will be relevant. It is the quotient
\begin{equation*}
V_{\mathbb{T}} := \left ( \Omega^2(M) \oplus \Omega^1(Y) \oplus \Omega^1(W)  \right ) \;/\; U \text{,}
\end{equation*}
where the linear subspace $U$ we divide out is given by
\begin{equation*}
U := \left \lbrace (\mathrm{d}\chi, \pi^{*}\chi + \nu,\omega^{*}\nu) \;|\; \chi\in\Omega^1(M)\text{, }\nu\in\Omega^1(Y) \right \rbrace\text{.}
\end{equation*}
Now, the second part of Theorem \ref{stringconcon} is implied by the following proposition.

\begin{proposition}
\label{trivconaff}
Suppose $\mathbb{T}=(\mathcal{S},\mathcal{A},\sigma)$ is a trivialization of  $\mathbb{G}$, and suppose $\nabla$ is a connection on $\mathbb{G}$. Then, the set of compatible connections on $\mathbb{T}$ is an affine space over $V_{\mathbb{T}}$.
\end{proposition}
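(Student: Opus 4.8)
The plan is to fix one compatible connection as a basepoint and to parametrize all others by their difference, showing that this difference ranges freely over $V_{\mathbb{T}}$. By Proposition \ref{trivconex} the set of compatible connections on $\mathbb{T}$ is non-empty; fix one, $\babla_0$, with underlying connection $\nabla_0$ on $\mathcal{S}$, and let $\omega:W \to Y$ denote the covering built into $\mathcal{S}$ (Definition \ref{bundlegerbe}). A compatible connection $\babla$ on $\mathbb{T}=(\mathcal{S},\mathcal{A},\sigma)$ consists of a connection on $\mathcal{S}$ and a compatible connection on $\mathcal{A}$, the transformation $\sigma$ contributing only the constraint that it be connection-preserving. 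My strategy is to read off the three form-components of $\babla-\babla_0$ slot by slot, to identify the constraints and redundancies, and then to verify that the resulting assignment furnishes a free and transitive action of $V_{\mathbb{T}}$.

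First I would recall, via Lemma \ref{affine}, that connections on the fixed bundle gerbe $\mathcal{S}$ form an affine space: relative to $\nabla_0$, a connection on $\mathcal{S}$ is described by a $1$-form $\beta\in\Omega^1(W)$, which shifts the connection on the underlying $\ueins$-bundle by $\omega_2^{*}\beta-\omega_1^{*}\beta$, together with a change of curving; the pair carries the standard reparametrization $\beta\mapsto\beta+\omega^{*}\nu$ for $\nu\in\Omega^1(Y)$. This produces the $\Omega^1(W)$ summand. Next, since the source $\mathcal{P}\otimes\pi_2^{*}\mathcal{S}$ and target $\pi_1^{*}\mathcal{S}$ now carry the shifted connections, I would invoke the corresponding affine structure for compatible connections on the isomorphism $\mathcal{A}$ over $Y^{[2]}$: relative to $\babla_0$ these differ by a $1$-form on $Y^{[2]}$, and the requirement that $\sigma$ stay connection-preserving forces this $1$-form to be $\delta$-closed over $Y^{[3]}$, hence by exactness of the fibre-product complex to descend to a unique $\lambda\in\Omega^1(Y)$. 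This produces the $\Omega^1(Y)$ summand.

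The crux is the $\Omega^2(M)$ summand and the precise identification of $U$. Here I would use that, because $\mathcal{A}$ carries a compatible connection, condition \erf{condcurv} together with Lemma \ref{3form} forces $\mathrm{curv}(\nabla_{\mathcal{S}})+B$ to be the pullback of a $3$-form from $M$; with $B$ fixed, the curving change of $\mathcal{S}$ is constrained so that, after the compensating adjustment it induces on the connection of $\mathcal{A}$ (absorbing the term $\delta\rho$ over $Y^{[2]}$), only a $2$-form pulled back from $M$ survives as an independent parameter. Invoking the exact sequence \erf{exact} as in the proof of Lemma \ref{3form}, I obtain the summand $\rho\in\Omega^2(M)$, acting on the curving of $\mathcal{S}$ by $\omega^{*}\pi^{*}\rho$. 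Assembling the three contributions gives a well-defined triple $(\rho,\lambda,\beta)\in\Omega^2(M)\oplus\Omega^1(Y)\oplus\Omega^1(W)$, and the two redundancies relating adjacent slots — the interplay between the curving of $\mathcal{S}$ and the connection of $\mathcal{A}$ along forms $\mathrm{d}\chi,\pi^{*}\chi$ for $\chi\in\Omega^1(M)$, and the reparametrization $(\nu,\omega^{*}\nu)$ above — are exactly the generators of $U$. Hence $\babla-\babla_0$ defines an element of $V_{\mathbb{T}}$.

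Finally I would establish that this assignment is a free and transitive action, which is the asserted affine structure. Transitivity amounts to realizing every triple: given $(\rho,\lambda,\beta)$, modify $\nabla_0$ on $\mathcal{S}$ by $\beta$ and by the curving determined by $\rho$, adjust the connection on $\mathcal{A}$ by $\lambda$ and by the $\delta\rho$-term, and verify directly that $\mathcal{A}$ remains an isomorphism with compatible connection and that $\sigma$ stays connection-preserving; all conditions are linear and hold by construction. Freeness is the statement that a triple yielding the connection $\babla_0$ back must lie in $U$, which is precisely the content of the two redundancies isolated above. The main obstacle I anticipate is exactly this bookkeeping: tracking how a curving shift on $\mathcal{S}$ propagates through the compatible connection on $\mathcal{A}$ and the coherence forced by $\sigma$, and confirming that the \emph{only} relations among the three form-components are those generating $U$ — that is, pinning $U$ down on the nose rather than up to an a priori larger subspace.
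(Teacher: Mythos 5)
Your proposal is correct and takes essentially the same route as the paper's proof: the same three-component decomposition (a 2-form on $M$ entering through the curving of $\mathcal{S}$, a 1-form on $Y$ extracted from the connection on $\mathcal{A}$ using the constraint that $\sigma$ stays connection-preserving together with Lemma \ref{exact}, and a 1-form on $W$ via Lemma \ref{affine}), the same identification of $U$ as the two redundancies, and the same free-and-transitive verification that the paper carries out in Sections \ref{action1} and \ref{action2}. The only differences are organizational (the paper defines the action of $V_{\mathbb{T}}$ on connections explicitly and then proves it preserves compatibility, rather than fixing a basepoint via Proposition \ref{trivconex} and decomposing differences) plus one harmless slip on your side: the 1-form in $\Omega^1(Y)$ you extract by exactness is unique only up to $\pi^{*}\Omega^1(M)$, which is precisely the $\chi$-redundancy in $U$ that you list anyway.
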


The proof of Proposition \ref{trivconaff} is  technical, and most of the work is deferred to Sections \ref{action1} and \ref{action2}. At this place I want to at least give a hint as to why the vector space $V_{\mathbb{T}}$ appears. We require the following two lemmata. The first describes how to act on the set of connections of a fixed bundle gerbe.
\begin{lemma}
\label{affine}
The set of connections on a bundle gerbe $\mathcal{S}$ over $Y$ is an affine space over the real vector space
\begin{equation*}
V_{\mathcal{S}} := \left ( \Omega^2(Y) \oplus \Omega^1(W)  \right ) / \left ( \mathrm{d} \oplus \omega^{*}  \right )\Omega^1(Y)\text{,}
\end{equation*}
where $\omega:W \to Y$ is the covering of $\mathcal{S}$. 
\end{lemma}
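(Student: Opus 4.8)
The plan is to prove Lemma \ref{affine} by unpacking what the data of a connection on a bundle gerbe $\mathcal{S}$ over $Y$ actually consists of, and then showing that the affine freedom in choosing such data is exactly $V_{\mathcal{S}}$. Recall (from the forthcoming Definition \ref{bundlegerbe}) that a bundle gerbe $\mathcal{S}$ over $Y$ carries a covering $\omega:W \to Y$, a principal $\ueins$-bundle $L$ over $W^{[2]}$, and a multiplication isomorphism over $W^{[3]}$. A connection on $\mathcal{S}$ consists of two pieces of differential-geometric data: a \emph{curving}, which is a $2$-form $f \in \Omega^2(W)$, together with a connection on the line bundle $L$ over $W^{[2]}$ whose curvature is $\pi_2^{*}f - \pi_1^{*}f$, subject to the compatibility that the multiplication isomorphism be connection-preserving. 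So I would first fix one reference connection on $\mathcal{S}$ and describe an arbitrary connection as a perturbation of it.

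The key computation is to identify the space of such perturbations. Changing the curving $f$ by a $2$-form $\beta \in \Omega^2(W)$ forces a compensating change in the connection on $L$, so that its curvature still equals $\pi_2^{*}(f+\beta)-\pi_1^{*}(f+\beta)$. Two connections on a fixed $\ueins$-bundle differ by a global $1$-form, here an element of $\Omega^1(W^{[2]})$; the curvature constraint together with the requirement that the multiplication remain connection-preserving cuts this down. The standard descent argument (the relevant simplicial/\v{C}ech complex for the covering $\omega$ is exact in the appropriate degree) shows that the freedom in the connection on $L$, once $\beta$ is fixed, is exactly a $1$-form $\alpha \in \Omega^1(W)$ pulled back via $\omega$ to $W^{[2]}$, i.e. it is governed by $\omega^{*}\alpha$. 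This is why a perturbation of a connection is parameterized by a pair $(\beta,\alpha) \in \Omega^2(Y) \oplus \Omega^1(W)$, where $\beta$ is required to descend to $Y$ (the curving shift must be compatible with the gerbe curvature living on $Y$), accounting for the $\Omega^2(Y)$ summand rather than $\Omega^2(W)$.

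Finally I would pin down the kernel, i.e. which pairs $(\beta,\alpha)$ induce the \emph{same} connection (equivalently, induce a gauge-equivalent perturbation that is trivial). A pair acts trivially precisely when it arises from reparameterizing by a global $1$-form $\nu \in \Omega^1(Y)$: shifting the curving by $\mathrm{d}\nu$ while simultaneously shifting the $L$-connection by $\omega^{*}\nu$ leaves the connection on $\mathcal{S}$ unchanged. This gives exactly the subspace $(\mathrm{d} \oplus \omega^{*})\Omega^1(Y)$ that is quotiented out, yielding the affine model space
\begin{equation*}
V_{\mathcal{S}} = \left( \Omega^2(Y) \oplus \Omega^1(W) \right) / \left( \mathrm{d} \oplus \omega^{*} \right)\Omega^1(Y)\text{.}
\end{equation*}

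I expect the main obstacle to be the bookkeeping in the second step: correctly tracking the curvature constraint on $L$ and the connection-preserving condition on the multiplication isomorphism simultaneously, and verifying that these constraints conspire to leave precisely one free $1$-form on $W$ (not on $W^{[2]}$) after the curving shift is chosen. This rests on the exactness of the descent complex associated to the surjective submersion $\omega:W \to Y$, a standard but delicate fact; I would isolate it as the technical heart and appeal to the established bundle gerbe connection theory in \cite{stevenson1,waldorf1} rather than reprove it. Once that exactness is in hand, identifying both the parameter space and the degeneracy subspace $U$ is routine linear algebra.
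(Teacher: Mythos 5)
Your proposal is correct and follows essentially the same route as the paper's proof: both unpack a connection into a curving $2$-form plus a bundle connection, use the exactness of the fundamental complex of the covering $\omega:W\to Y$ (Lemma \ref{exact}, due to Murray) to parameterize differences of connections by pairs in $\Omega^2(Y)\oplus\Omega^1(W)$, and identify the trivially-acting pairs as $(\mathrm{d}\nu,\omega^{*}\nu)$ with $\nu\in\Omega^1(Y)$. The only repair needed is notational: the shift of the bundle connection over $W^{[2]}$ induced by $\alpha\in\Omega^1(W)$ is the alternating sum $\delta_{\omega}\alpha=\omega_{2}^{*}\alpha-\omega_{1}^{*}\alpha$ of pullbacks along the two projections $W^{[2]}\to W$, not \quot{$\omega^{*}\alpha$}, and likewise the curving shift is $\mathrm{d}\alpha$ plus a form descended from $Y$ rather than a descended form alone; with this reading your kernel pairs indeed act trivially, since $\delta_{\omega}(\omega^{*}\nu)=0$ and $\mathrm{d}(\omega^{*}\nu)=\omega^{*}(\mathrm{d}\nu)$.
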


We prove this lemma in Section \ref{cons} based on results of Murray. The second lemma describes how to act on the set of connections on an isomorphism. Here we have to infer that  an isomorphism also comes with its own covering. 

\begin{lemma}
\label{affineiso}
Let $\mathcal{A}:\mathcal{G} \to \mathcal{H}$ be an isomorphism between bundle gerbes with covering $Z$. Then, the set of connections on $\mathcal{A}$ is an affine space over $\Omega^1(Z)$. 
\end{lemma}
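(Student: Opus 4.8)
The plan is to prove Lemma \ref{affineiso} by unwinding the definition of an isomorphism of bundle gerbes and of a compatible connection on such an isomorphism, both of which are recalled in Section \ref{bundlegerbes}. Recall that an isomorphism $\mathcal{A}:\mathcal{G}\to\mathcal{H}$ between bundle gerbes over $M$ is, by the theory of \cite{waldorf1}, given by a covering $\zeta:Z\to M$ refining the coverings of $\mathcal{G}$ and $\mathcal{H}$, a principal $\ueins$-bundle over $Z$, and a compatible isomorphism of the pullback gerbe data; a connection on $\mathcal{A}$ is then a connection on that principal $\ueins$-bundle subject to a compatibility condition with the connections on the source and target gerbes. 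First I would fix one connection $\nabla_{\mathcal{A}}$ on $\mathcal{A}$, which exists by the general existence results for connections on gerbe morphisms.

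The core of the argument is that the compatibility condition constraining a connection on $\mathcal{A}$ is an \emph{inhomogeneous linear} equation relating the connection 1-form on the $\ueins$-bundle over $Z$ to the fixed connection data on $\mathcal{G}$ and $\mathcal{H}$. Consequently, if $\nabla_{\mathcal{A}}$ and $\nabla_{\mathcal{A}}'$ are two connections on $\mathcal{A}$, their difference satisfies the associated \emph{homogeneous} equation. The strategy is to show that this homogeneous equation is exactly the assertion that the difference is a globally defined $1$-form on the base $Z$ with no further constraint. Concretely, two connections on a principal $\ueins$-bundle over $Z$ differ by a $1$-form in $\Omega^1(Z)$, and the compatibility condition, being affine with the same inhomogeneous term for both, imposes no residual restriction on that difference $1$-form. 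Hence the difference ranges freely over all of $\Omega^1(Z)$, and conversely every $\alpha\in\Omega^1(Z)$ yields a new connection $\nabla_{\mathcal{A}}+\alpha$ on $\mathcal{A}$ still satisfying the compatibility condition.

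I would organise the write-up as follows. After fixing a reference connection $\nabla_{\mathcal{A}}$, I would (i) recall that connections on the underlying $\ueins$-bundle over $Z$ form an affine space over $\Omega^1(Z)$, a standard fact; (ii) write out the compatibility condition explicitly and observe its affine-linear form in the connection $1$-form; (iii) subtract to reduce the two-connection comparison to the homogeneous condition; and (iv) verify that the homogeneous condition is vacuous, so the difference map $\nabla_{\mathcal{A}}\mapsto \nabla_{\mathcal{A}}'-\nabla_{\mathcal{A}}\in\Omega^1(Z)$ is a bijection onto $\Omega^1(Z)$, exhibiting the desired affine structure. Freeness and transitivity of the $\Omega^1(Z)$-action then follow immediately from this bijection.

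The main obstacle I anticipate is step (iv): verifying that the homogeneous compatibility condition really places \emph{no} constraint on the difference $1$-form. This requires care because an isomorphism of gerbes carries, in addition to its covering $Z$, a principal $\ueins$-bundle whose connection is tied to the gerbe connections on source and target via a condition over the fibre product $Z^{[2]}$. I would need to confirm that this condition, once the inhomogeneous part is cancelled by subtraction, reduces to the statement that the difference $1$-form is pulled back consistently, i.e.\ that it already lives on $Z$ rather than only on $Z^{[2]}$, and that no descent or closedness requirement survives. This is analogous to, but simpler than, the situation in Lemma \ref{affine}, where the quotient by $(\mathrm{d}\oplus\omega^{*})\Omega^1(Y)$ records a genuine residual ambiguity; for an isomorphism the corresponding space is the unquotiented $\Omega^1(Z)$ precisely because an isomorphism has one fewer layer of gerbe structure than a bundle gerbe itself. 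Making this structural comparison precise, by tracing through the definitions in Section \ref{bundlegerbes}, is the crux of the proof.
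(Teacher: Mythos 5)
Your proposal misreads what \quot{connection on $\mathcal{A}$} means in this lemma, and the misreading is fatal to your strategy. In the paper's Definition \ref{conpres}, a \emph{connection} on an isomorphism $\mathcal{A}$ is nothing more than a connection $\kappa$ on its principal $\ueins$-bundle $Q$ over $Z$; compatibility with connections on $\mathcal{G}$ and $\mathcal{H}$ is a separate, additional property (part 2 of that definition), and the paper says explicitly, right after stating Lemma \ref{affineiso} in Section \ref{exclass}, that the lemma concerns connections that are \emph{not} necessarily compatible. With that reading the proof is a one-liner, and that is all the paper does: two connection 1-forms on a principal $\ueins$-bundle differ by an invariant horizontal 1-form, which therefore descends to the base $Z$, so connections on $Q$ form an affine space over $\Omega^1(Z)$.

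Under your reading --- connections required to satisfy the compatibility condition --- the statement you set out to prove is actually \emph{false}, so step (iv) of your plan cannot be carried out. If $\kappa$ and $\kappa'$ are both compatible with fixed connections $(C_1,\omega_1)$ on $\mathcal{G}$ and $(C_2,\omega_2)$ on $\mathcal{H}$, then the curvature condition $\mathrm{curv}(\kappa)=\mathrm{curv}(\kappa')=\zeta^{*}(C_2-C_1)$ forces $\mathrm{d}(\kappa'-\kappa)=0$, and connection-preservation of the bundle isomorphism $\alpha$ over $Z\times_M Z$ for \emph{both} connections forces $\zeta_1^{*}(\kappa'-\kappa)=\zeta_2^{*}(\kappa'-\kappa)$; by the exact sequence of Lemma \ref{exact}, applied to the covering $Z \to M$, the difference is then the pullback of a unique closed 1-form on $M$. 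So compatible connections form an affine space over $\Omega^1_{\mathrm{cl}}(M)$ pulled back to $Z$, not over $\Omega^1(Z)$: the homogeneous condition is far from vacuous, contrary to what your crux step asserts. The analogy you draw with Lemma \ref{affine} actually points the right way --- conditions over fibre products always leave a residue --- but the resolution here is not that the residue vanishes; it is that the lemma never imposes compatibility in the first place.
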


Notice that this is a statement on the set of connections that are \emph{not} necessarily compatible with connections on the bundle gerbes $\mathcal{G}$ and $\mathcal{H}$. Its  proof can also be found in Section \ref{cons}.\medskip

\begin{proofblank}{Proof of Proposition \ref{trivconaff}}
Let us describe the action of the vector space $V_{\mathbb{T}}$ on the set of (not necessarily compatible) connections on $\mathbb{T}=(\mathcal{S},\mathcal{A},\sigma)$.
For $(\psi,\rho,\varphi)\in V_{\mathbb{T}}$, consider the pair $(\eta,\varphi)\in V_{\mathcal{S}}$ with $\eta := \mathrm{d}\rho - \pi^{*}\psi \in \Omega^2(Y)$. It operates on the connection on the bundle gerbe $\mathcal{S}$ according to Lemma \ref{affine}. For $Z$ the covering space of the 1-isomorphism $\mathcal{A}: \mathcal{P} \otimes \pi_2^{*}\mathcal{S} \to \pi_1^{*}\mathcal{S}$ of bundle gerbes over $Y^{[2]}$, $Z$ has a projection $p: Z \to W \times_M W$. Consider 
\begin{equation*}
\varepsilon :=  p^{*}(\delta(\omega^{*}\rho-\varphi)) \in \Omega^1(Z)\text{,}
\end{equation*}
where $\delta$ is the linear map
\begin{equation*}
\delta := \omega_2^{*} - \omega_1^{*}: \Omega^1(W) \to \Omega^1(W \times_M W)\text{,}
\end{equation*}
(cf. Lemma \ref{exact}). The 1-form $\varepsilon$ operates on the 1-isomorphism $\mathcal{A}$ according to Lemma \ref{affineiso}.

It is straightforward to check that this action is well-defined under dividing out the subvectorspace $U$:
suppose we have 1-forms $\chi\in\Omega^1(M)$ and $\nu\in\Omega^1(Y)$ and act by the triple consisting of $\psi := \mathrm{d}\chi$, $\rho := \pi^{*}\chi + \nu$ and $\varphi := \omega^{*}\nu$. It follows that $\eta = \mathrm{d}\nu$ so that $(\eta,\varphi)\in V_{\mathcal{S}}$ acts trivially by Lemma \ref{affine}. Furthermore, we find $\omega^{*}\rho-\varphi= \omega^{*}\pi^{*}\chi$, so that its alternating sum $\delta$ vanishes (see again Lemma \ref{exact}). Hence, $\varepsilon=0$. 

The remaining steps are the content of the following lemma, which is to be proven in Sections \ref{action1} and \ref{action2}.
\end{proofblank}

\begin{lemma}
\label{lemaction1}
\label{lemaction2}
The action of $V_{\mathbb{T}}$ on connections on $\mathbb{T}$ has the following properties:
\begin{enumerate}
\item[(a)]
It takes compatible connections to compatible connections.

\item[(b)]
It is free and transitive on compatible connections.
\end{enumerate}
\end{lemma}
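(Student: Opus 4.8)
The plan is to prove both claims by analysing separately the effect of the action on the connection on the bundle gerbe $\mathcal{S}$ and on the connection on the isomorphism $\mathcal{A}$, and then confronting the outcome with the three conditions defining a compatible connection on $\mathbb{T}=(\mathcal{S},\mathcal{A},\sigma)$. For part (a) I would begin with the curvature bookkeeping. By construction the action shifts the connection on $\mathcal{S}$ by the class of $(\eta,\varphi)\in V_{\mathcal{S}}$ with $\eta=\mathrm{d}\rho-\pi^{*}\psi$, so by Lemma \ref{affine} the curvature of $\mathcal{S}$ changes by $\mathrm{d}\eta=-\pi^{*}\mathrm{d}\psi$. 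The decisive point is that this increment is pulled back from $M$, whence $\pi_{2}^{*}\mathrm{curv}(\mathcal{S})-\pi_{1}^{*}\mathrm{curv}(\mathcal{S})$ is unchanged. Together with condition \erf{condcurv} this is exactly the equality of the curvatures of the source $\mathcal{P}\otimes\pi_{2}^{*}\mathcal{S}$ and the target $\pi_{1}^{*}\mathcal{S}$ of $\mathcal{A}$, i.e. the condition under which $\mathcal{A}$ can carry a compatible connection at all (compare Lemma \ref{3form}); so this condition survives the action. The shape $\eta=\mathrm{d}\rho-\pi^{*}\psi$ is chosen precisely so that $\mathrm{d}\eta$ is a pullback.

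It then remains to check that the prescribed increment $\varepsilon=p^{*}\delta(\varphi-\omega^{*}\rho)$ keeps the shifted connection on $\mathcal{A}$ compatible, and that $\sigma$ stays connection-preserving. This is the main obstacle. I would unwind the definition of a compatible connection on the isomorphism $\mathcal{A}\colon\mathcal{P}\otimes\pi_{2}^{*}\mathcal{S} \to \pi_{1}^{*}\mathcal{S}$ and track how the curving change $\omega^{*}\eta$ and the connection change $\varphi$ on $\mathcal{S}$ propagate through the covering $p\colon Z \to W\times_{M}W$; the claim is that they alter the failure of $\mathcal{A}$ to be connection-preserving exactly by $p^{*}\delta(\varphi-\omega^{*}\rho)$, which $\varepsilon$ cancels. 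For $\sigma$, which lives over $Y^{[3]}$, the restrictions of $\varepsilon$ entering its source and target are built from the \v Cech-type coboundary $\delta$, so they cancel in the alternating sum and $\sigma$ remains connection-preserving. All of this is bookkeeping across the three coverings $W$, $Z$ and the fibre products $Y^{[k]}$, and is where the real work lies; the detailed verification is deferred to Sections \ref{action1} and \ref{action2}.

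For part (b), freeness and transitivity follow from a descent argument based on \erf{exact}. For transitivity, the difference of two compatible connections determines a class in $V_{\mathcal{S}}$ on $\mathcal{S}$ and a $1$-form in $\Omega^{1}(Z)$ on $\mathcal{A}$; compatibility of both connections forces these to be realisable by a triple $(\psi,\rho,\varphi)$, using \erf{exact} to descend the relevant forms to $M$, $Y$ and $W$. For freeness, suppose $(\psi,\rho,\varphi)$ acts trivially. Then $(\eta,\varphi)=0$ in $V_{\mathcal{S}}$, i.e. $\eta=\mathrm{d}\nu$ and $\varphi=\omega^{*}\nu$ for some $\nu\in\Omega^{1}(Y)$, and $\varepsilon=0$. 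From $\eta=\mathrm{d}\rho-\pi^{*}\psi=\mathrm{d}\nu$ we get $\mathrm{d}(\rho-\nu)=\pi^{*}\psi$, while $\varepsilon=p^{*}\delta(\omega^{*}(\nu-\rho))=0$ together with \erf{exact} forces $\rho-\nu=\pi^{*}\chi$ for a unique $\chi\in\Omega^{1}(M)$; injectivity of $\pi^{*}$ then gives $\psi=\mathrm{d}\chi$. Hence $(\psi,\rho,\varphi)=(\mathrm{d}\chi,\pi^{*}\chi+\nu,\omega^{*}\nu)\in U$, which is exactly the relation divided out in $V_{\mathbb{T}}$, so the action is free.
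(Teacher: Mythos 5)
Your overall strategy coincides with the paper's, your curvature bookkeeping for $\mathcal{S}$ is sound, and your freeness argument is complete and correct: it reproduces exactly the paper's chain of implications in Section \ref{action2} (from $\varepsilon=0$ and $(\eta,\varphi)=0$ in $V_{\mathcal{S}}$ to $\varphi=\omega^{*}\nu$, $\rho=\pi^{*}\chi+\nu$, $\psi=\mathrm{d}\chi$, hence membership in $U$). But the two places you yourself call ``the main obstacle'' and ``where the real work lies'' are precisely the mathematical content of this lemma, and you do not carry them out; deferring them to Sections \ref{action1} and \ref{action2} is circular, since those sections \emph{are} the paper's proof of the statement you were asked to prove. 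Concretely, for part (a) you must verify (i) the curvature identity \erf{1} for $\kappa'=\kappa+\varepsilon$, (ii) that the bundle isomorphism $\alpha$ underlying $\mathcal{A}$ stays connection-preserving, which amounts to the equality of the increments $w_1^{*}\delta_{\omega}\varphi+\zeta_2^{*}\varepsilon$ and $\zeta_1^{*}\varepsilon+w_2^{*}\delta_{\omega}\varphi$ over $Z\times_{Y^{[2]}}Z$, and (iii) that $\sigma$ stays connection-preserving. For (iii) your picture of terms that ``cancel in the alternating sum'' is misleading: the source of $\sigma$ changes by $k_s^{*}(\zeta_1^{*}\varepsilon+\zeta_2^{*}\varepsilon)$ --- two copies of $\varepsilon$ \emph{added}, not subtracted --- while the target changes by $k_t^{*}(\mathrm{pr}_1^{*}\delta_{\omega}\varphi+\mathrm{pr}_2^{*}\varepsilon)$, where the $\delta_{\omega}\varphi$ term comes from the identity isomorphism $\id_{\pi_3^{*}\mathcal{S}}$, whose connection also moves under the action; the equality of these two 1-forms is a telescoping identity that uses the specific shape $\varepsilon=p^{*}\delta_{\pi\circ\omega}(\varphi-\omega^{*}\rho)$ and the fact that $\delta(\omega^{*}\rho)$ vanishes on $W\times_Y W$. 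None of this appears in your text.

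For part (b), transitivity is likewise only asserted: the sentence ``compatibility of both connections forces these to be realisable by a triple'' is the statement of transitivity, not an argument. What is actually needed is the construction of the triple from the raw differences $(\eta,\varphi)\in V_{\mathcal{S}}$ and $\epsilon\in\Omega^1(Z)$ supplied by Lemmata \ref{affine} and \ref{affineiso}: one sets $\tilde\rho:=\epsilon-p_2^{*}\varphi+p_1^{*}\varphi\in\Omega^1(Z)$, uses connection-preservation of $\alpha$ for \emph{both} connections to get $\delta_{\ell}\tilde\rho=0$ and hence $\tilde\rho=\ell^{*}\rho'$ with $\rho'\in\Omega^1(Y^{[2]})$; then uses connection-preservation of $\sigma$ for both connections to show that $\rho'$ is a coboundary, $\rho'=\delta_{\pi}\rho$ with $\rho\in\Omega^1(Y)$; and finally uses the curvature condition \erf{1} to descend $\psi':=\mathrm{d}\rho-\eta$ to a 2-form $\psi$ on $M$. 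Note in particular that the descent goes in two stages through $Y^{[2]}$ --- one cannot descend directly ``to $M$, $Y$ and $W$'' as you suggest --- and that each compatibility condition ($\alpha$, then $\sigma$, then the curvature equation) is consumed at a specific stage. Without these constructions and the verifications in part (a), the proposal is a plan that mirrors the paper's outline rather than a proof.
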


\setsecnumdepth{1}

\section{Trivializations of Chern-Simons Theory}

\label{trivcs}

In this section we  compare the geometric string structures introduced in Definition \ref{stringcon} with the   concept introduced by Stolz and Teichner:

\begin{definition}[{\cite[Definition 5.3.4]{stolz1}}]
\label{defstringst}
A \emph{geometric string structure} on a principal $\spin n$-bundle $P$  with connection $A$ over $M$ is a trivialization of the extended Chern-Simons theory $Z_{P,A}$. [...]
\end{definition}

In Section \ref{csex} we explain what the extended Chern-Simons theory $Z_{P,A}$ is in a generic model of \quot{$n$-bundles with connection}. In Section \ref{models} we mention three possible models, and discuss in detail the one formed by \quot{bundle $n$-gerbes with connection}. In Section \ref{trivcsdet} we discuss trivializations of $Z_{P,A}$ and prove Theorem \ref{stringtrivcs}, which states a certain  equivalence between Definition \ref{defstringst} and our Definition \ref{stringcon}. Before going on, I have to mention two aspects of Definition \ref{defstringst} that are not completely covered in \cite{stolz1}. 
\begin{enumerate}
\item[(a)]
The first aspect concerns smoothness conditions that have to be imposed on Chern-Simons theories and their trivializations. Such conditions are mentioned is the last sentence of \cite[Definition 5.3.4]{stolz1} (that is what \quot{[...]} in Definition \ref{defstringst} refers to):
\begin{equation}
\newlength{\parboxwidth}
\addtolength{\parboxwidth}{\textwidth}
\addtolength{\parboxwidth}{-3.4cm}
\parbox{\parboxwidth}{
\quot{\textit{... these data fit together to give bundles (respectively sections in these bundles) over the relevant mapping spaces.}}}
\label{quote1}
\end{equation}
The structure group of some of these bundles  is the group $\mathrm{Out}(A)$ of outer automorphisms of a certain von Neumann algebra --- this group is not a Lie group (not even infinite-dimensional), so that it is not totally obvious in which sense these bundles can be smooth. Below, we will interpret the above quote as close as possible in the context of bundle $n$-gerbes with connection.

I remark that Stolz, Teichner and Hohnhold recently came up with a rigorous treatment of smoothness for  field theories (and supersymmetry) in terms of categories fibred over manifolds (resp. supermanifolds) \cite{hohnhold1,stolz5}.

\item[(b)]
The second aspect concerns the values of Chern-Simons theories and their trivializations on manifolds with boundaries or corners. Here \cite[Definition 5.3.4]{stolz1} says: \quot{\textit{There are also data associated to manifolds with boundary and these data must fit together when gluing manifolds and connections.}} Recent results of Hopkins and Lurie \cite{lurie1} suggest that a rigorous formulation of such gluing conditions must be based on $(\infty,n)$-categories of cobordisms. However, these results are newer than the paper \cite{stolz1} (and in fact, to some extent, emerged from it). For that reason we are going to ignore manifolds with corners or boundaries in the following.  
\end{enumerate}

In contrast to these  issues with the pioneering definition of Stolz and Teichner, Definition \ref{stringcon} that we propose in this article  \emph{is} complete. In particular, a theorem that literally states the equivalence between the two definitions cannot be expected, and I think that Theorem \ref{stringtrivcs} that we prove below is the best approximation one can have.

\setsecnumdepth{2}

\subsection{Chern-Simons Theory as an Extended 3d TFT}

\label{csex}

\def\nbuncon#1#2{#1\text{-}\ubuncon{#2}}

At first sight, the biggest difference between the formalism of Stolz and Teichner and ours is a different geometric model \quot{$n$-bundles with connection} --- geometrical objects classified by the differential cohomology group $\hat \h^{n+1}(M,\Z)$ mentioned in Section \ref{resstringcon}. We consider this difference as unessential for the following constructions, and start by talking generically about \emph{$n$-bundles with connection} as geometrical representatives for classes in $\hat \h^{n+1}(M,\Z)$. We assume  the following minimal requirements for this abstract model: 
\begin{enumerate}

\item[(i)]

$n$-bundles with connection over $M$ form a category $\nbuncon nM$. In particular, this category could be the homotopy 1-category of some $n$-category.
Moreover, the categories  $\nbuncon nM$ form  a presheaf over smooth manifolds, i.e. $n$-bundles with connection can consistently be pulled back along smooth maps. In particular, we have a category  $\mathcal{C}_n := \nbuncon n*$ of \quot{fibres}, and for every point $x\in M$ a functor
\begin{equation*}
\iota_x^{*}: \nbuncon nM \to \mathcal{C}_n
\end{equation*}
is induced by the inclusion $\iota_x$ of $x$ into $M$.

\item[(ii)]
For $X^d$ a closed oriented $d$-dimensional smooth manifold, 
there is a \quot{transgression} functor 
\begin{equation*}
\mathscr{T}_{X^d}: \nbuncon n{M} \to \nbuncon {(n-d)}{C^{\infty}(X^d,M)}\text{.}
\end{equation*}
This functor covers, on the level of differential cohomology, the usual transgression homomorphism, which can for instance be treated in terms of Deligne cohomology \cite{gomi2}.
For $d=0$ and $X^0$ a single point, we require $\mathscr{T}_{X^0}$ to coincide with the pullback along the \quot{evaluation map} $\ev: C^{\infty}(X^0,M) \to M$. 
\end{enumerate}

In this abstract setting, an \emph{$n$-dimensional extended topological field theory $Z$ over $M$}
assigns to every  closed oriented smooth manifold $X^d$ of dimension $0\leq d \leq n$ an $(n-d)$-bundle $Z(X^d)$ with connection over $C^{\infty}(X^d,M)$. This is neither (a) the most general nor (b) a complete definition. Concerning (a), it actually only includes \emph{classical} field theories.
Concerning (b), one would additionally require  relations  over  manifolds with boundaries --- as explained above, we decided to ignore these.

In spite of that,   the above notion of an $n$-dimensional extended TFT is not completely naive: it implements exactly  the smoothness condition \erf{quote1} of Stolz and Teichner. This becomes explicit by saying that the \emph{value} of a TFT $Z$ at a point $\phi\in C^{\infty}(X^d,M)$ is the object  object $\iota_{\phi}^{*}Z(X^d)$ in the category $\mathcal{C}_{d-n}$ of fibres. By construction, these values form a bundle over the mapping space $C^{\infty}(X^d,M)$.

\emph{Chern-Simons theories over $M$} are examples of  extended 3-dimensional TFTs. They are parameterized by    $3$-bundles with connection over $M$, where the  Chern-Simons theory $Z_F$ associated to a 3-bundle $F$ with connection is defined by transgression,
\begin{equation}
\label{eq:defcs}
Z_F(X^d) := \mathscr{T}_{X^d}(F)\text{.}
\end{equation}
In the following, we  specify a concrete model for $n$-bundles with connection, and then use definition \erf{eq:defcs} to compare our concept of geometric string structures with the one of Stolz and Teichner.

\begin{remark}
In the classical picture of Chern-Simons theory one has  $M=BG$. In this case, the 3-bundle $F$ with connection represents a \emph{level} $k \in \hat \h^4(BG,\Z)$.
The maps $\phi:X^d \to BG$   classify principal $G$-bundles over $X^d$. Under these identifications, \erf{eq:defcs} is Freed's original concept  of Chern-Simons theory in its extended version \cite[Section 3.2]{freed3}. 
In order to avoid certain problems that come from the fact that $BG$ is not a smooth manifold, one inserts the \quot{target manifold} $M$, factors the maps $\phi: X^d \to BG$ through a fixed map $\xi: M \to BG$ and replaces the class $\xi^{*}k \in \hat\h^4(M,\Z)$ by a fixed 3-bundle $F$ with connection --- this is the above point of view. The same strategy is also applied in \cite{stolz1}.
\end{remark}

\subsection{Models for $n$-Bundles with Connection} 

\label{models}

We mention three different models for \quot{$n$-bundles with connection} that could furnish the categories $\nbuncon nX$ and the transgression functors $\mathscr{T}_{X^d}$. 
\begin{enumerate}

\item[(a)]
The first model is the \emph{Hopkins-Singer model} \cite[Definition 2.5]{hopkins1} that Freed uses for his definition of Chern-Simons theory \cite{freed3}. It can be seen as a categorical version of Cheeger-Simons differential characters, where  cocycles form the objects and coboundaries the morphisms.

\item[(b)]
Stolz and Teichner understand an $n$-bundle over $X$ as a (at least continuous) map $f:X \to B_n$, where $B_n$ is a topological space playing the role of a $K(\Z,n+1)$ \cite[page 78]{stolz1}: 
\begin{center}
\begin{tabular}{l||l|l|l|p{4.2cm}}
$n$   & 0 & 1 & 2 & 3 \\\hline
$B_n$ & $S^1$ & $PU(A)$ & $\mathrm{Out}(A)$ & space of $\mathrm{Out}(A)$-torsors
\end{tabular}
\end{center} 
Here, $A$ is a type $\text{III}_1$ factor. 
The morphisms are homotopies between these maps. As remarked above, it is not totally clear what the smoothness assumptions on these maps are. Clearly, a $0$-bundle must be a \emph{smooth} map $f:X \to S^1$. In the next instance, it is  possible to specify what \emph{smooth} maps into $PU(A)$ are, in such a way that an equivalence with the category of principal $S^1$-bundles over $X$ is obtained. Another point is that there are no connections in this picture and, accordingly,  transgression (and thus, the Chern-Simons theory) cannot completely be defined in this setting \cite[Remark 5.3.2]{stolz1}.

\item[(c)]
The third model is \quot{bundle $n$-gerbes with connection}, where --- unfortunately --- an $n$-bundle corresponds to an $(n-1)$-gerbe; see Figure  \ref{fig:real}.
\begin{figure}
\begin{center}
\begin{tabular}{|p{1cm}|p{5.7cm}|p{3.8cm}|}\hline
$n$ & $n$-bundle with connection &  over a point \\\hline\hline
0   & smooth function $f:X \to \ueins$ & element of $\ueins$ \\\hline
1   & principal $\ueins$-bundle with connection & $\ueins$-torsor \\\hline
2   & bundle gerbe with connection & $\ueins$-groupoid \\\hline
3   & bundle 2-gerbe with connection & (omitted) \\\hline
\end{tabular}
\end{center}
\caption{Our model of $n$-bundles with connection. }
\label{fig:real}
\end{figure}
 All definitions are available (see Definitions \ref{twogerbe}, \ref{twogerbecon}, \ref{bundlegerbe}, \ref{bundlegerbecon}), together with discussions of their categorical structure, e.g. in Section \ref{trivcon}. The  transgression functors $\mathscr{T}_{X^d}$ can be defined completely, although not all definitions have appeared in the literature and here we will limit ourselves to the relevant aspects.  

\end{enumerate}

From that point on, we use the model (c) for $n$-bundles with connection,  because (A)  in that model we have the Chern-Simons 2-gerbe constructed in Sections \ref{cstwogerbe} and \ref{sec:conncs}, and (B) because  of the existing discussions of transgression functors in this context. Let us first explain the relevant aspects of it. The categories $\mathcal{C}_n$ of fibres can be derived by looking at bundle $n$-gerbes over a point --- this is left as an exercise.

The transgression functors $\mathscr{T}_{X^d}$ are a little bit more involved, but only the cases $d=0,3$ will be important later and these are the easiest ones. Let $\mathbb{G}$ be a bundle 2-gerbe with connection.  
\begin{itemize}
\item
For $d=3$, the smooth function $\mathscr{T}_{X^3}(\mathbb{G}): C^{\infty}(X^3,M) \to \ueins$ is the holonomy of $\mathbb{G}$. Its value at a map $\phi:X^3 \to M$ is obtained by choosing a trivialization $\mathbb{T}$ of $\phi^{*}\mathbb{G}$ with compatible connection $\babla$, and integrating the associated 3-form $H_{\babla}$ from Lemma \ref{3form} over $X^3$:
\begin{equation}
\label{holdef}
\mathscr{T}_{X^3}(\mathbb{G})(\phi) := \exp \left ( \int_{X^3} H_{\babla} \right )\text{.}
\end{equation}
If $\mathbb{H}$ is an isomorphic bundle 2-gerbe with connection, its holonomy coincides with the one of $\mathbb{G}$, and so $\mathscr{T}_{X^3}(\mathbb{G})= \mathscr{T}_{X^3}(\mathbb{H})$     as required since there are only identity morphisms in the category of smooth $\ueins$-valued functions.

\item
In the case $d=2$ we are concerned with a closed oriented surface $X^2\equiv \Sigma$. We will follow the  strategy of   transgressing a \emph{bundle gerbe} to the loop space  \cite[Section 3.1]{waldorf5}. The fibre of the principal $\ueins$-bundle $\mathscr{T}_{\Sigma}(\mathbb{G})$ over a map $\phi$ consists of 1-isomorphism classes of trivializations of $\phi^{*}\mathbb{G}$ with compatible connections, i.e.
\begin{equation*}
\mathscr{T}_{\Sigma}(\mathbb{G})|_{\phi} := \hc 0 \trivcon {\phi^{*}\mathbb{G}}\text{.} \end{equation*}
These fibres have the structure of  $\ueins$-torsors in virtue of Lemma \ref{catgeomstr} (iii) and the identifications $\hc 0 \ugrbcon \Sigma \cong \h^2(\Sigma,\ueins) \cong \ueins$. Analogously to \cite[Proposition 3.1.2]{waldorf5} one can show that this yields a Fréchet principal $\ueins$-bundle over $C^{\infty}(\Sigma,M)$. 

The connection on $\mathscr{T}_{\Sigma}(\mathbb{G})$ is defined by prescribing its parallel transport using tools developed jointly with Schreiber \cite{schreiber3,waldorf9} analogously to \cite[Section 4.2]{waldorf10}. All this is functorial: if $\mathbb{A}: \mathbb{G} \to \mathbb{H}$ is a morphism with compatible connection, one gets a bundle morphism
\begin{equation*}
\mathscr{T}_{\Sigma}(\mathbb{A}) : \mathscr{T}_{\Sigma}(\mathbb{G}) \to \mathscr{T}_{\Sigma}(\mathbb{H}) : [\mathbb{T}] \mapsto [\mathbb{T} \circ \phi^{*}\mathbb{A}^{-1}]
\end{equation*}
that regards a trivialization  as a morphism $\mathbb{T}:\phi^{*}\mathbb{G} \to \mathbb{I}$ to the trivial bundle 2-gerbe $\mathbb{I}$ (see \erf{trivasmorph}) and pre-composes it with the inverse of $\mathbb{A}$.

\item
The case $d=1$ and $X^1 \equiv S^1$ can be treated in a simple way by making the assumption that the surjective submersion $\pi:Y \to M$ of the bundle 2-gerbe $\mathbb{G}$ is \quot{loopable}, i.e. we assume that the map $L\pi: LY \to LM$ is again a surjective submersion. The assumption is satisfied in the case of the Chern-Simons 2-gerbe $\mathbb{CS}_P$, since its submersion is the projection of a principal bundle with connected structure group (see e.g. \cite[Proposition 1.9]{spera1} and \cite[Lemma 5.1]{waldorf13}). 
Our main input is the well-established transgression of bundle gerbes loop spaces, discussed in detail in \cite[Section 3.1]{waldorf5}.
This transgression is a monoidal functor $\mathscr{T}$ and commutes with pullbacks along smooth maps.

The bundle gerbe $\mathscr{T}_{S^1}(\mathbb{G})$ over $LM$ has the surjective submersion $L\pi$, over $LY^{[2]}$ it has the principal $\ueins$-bundle $P := \mathscr{T}(\mathcal{P})$ with connection and over $LY^{[3]}$ it has the isomorphism
\begin{equation*}
\mathscr{T}(\mathcal{M}): L\pi_{12}^{*}P \otimes L\pi_{23}^{*}P \to L\pi_{13}^{*}P\text{.}
\end{equation*}
 The associator $\mu$ of $\mathbb{G}$ transgresses to the associativity condition for the bundle gerbe product $\mathscr{T}(\mathcal{M})$. The curving 3-form of $\mathscr{T}_{S^1}(\mathbb{G})$ is simply the transgression of the curving 2-form of $\mathbb{G}$. In the same straightforward manner, any 1-isomorphism $\mathbb{G} \to \mathbb{H}$ with compatible connection transgresses to a 1-isomorphism $\mathscr{T}_{S^1}(\mathbb{G}) \to \mathscr{T}_{S^1}(\mathbb{H})$ with compatible connection.

\item
For $d=0$, we put $\mathscr{T}_{X^0} = \ev^{*}$ as required.

\end{itemize} 

With the above definitions of the transgression functors $\mathscr{T}_{X^d}$, and definition \erf{eq:defcs} of a Chern-Simons theory we make the following definition.  

\begin{definition}
\label{def:cs}
Let $P$ be a principal $\spin n$-bundle over $M$ with connection $A$, and let  $F=(\mathbb{CS}_P,\nablaa)$ be the Chern-Simons 2-gerbe together with its connection $\nablaa$. Then, 
\begin{equation*}
Z_{P,A} := Z_{F}\text{,}
\end{equation*}
is called the \emph{Chern-Simons theory associated to $(P,A)$}. 
\end{definition}

In the next subsection, we will apply Stolz-Teichner's definition of a trivialization to \emph{this} version of Chern-Simons theory.  
Just in order to verify that Definition \ref{def:cs} is correct, we get the following consequence of Theorem \ref{threeform}.

\begin{corollary}
The value of the Chern-Simons theory $Z_{P,A}$ on a closed oriented 3-manifold $\phi: X^3 \to M$ is the classical Chern-Simons invariant:
\begin{equation*}
\iota_{\phi}^{*}Z_{P,A}(X^3) = \exp \left ( \int_{X^3} s^{*}TP(A)  \right )\text{,}
\end{equation*}
where $TP(A) \in \Omega^3(P)$ is the Chern-Simons 3-form and $s:X^3 \to P$ is a section along $\phi$.
\end{corollary} 

\begin{proof}
The section exists because $\spin n$ is simply connected. To prove the formula,
one combines \erf{holdef} and \erf{threeformdiff} with the fact that the 3-form $K_{\babla}$ in \erf{threeformdiff} has integral class and thus vanishes under the integral.
\end{proof}

The same result has been proved in \cite{carey4}. In that paper,  the relation between the bundle 2-gerbe $\mathbb{CS}_P$ and Chern-Simons theory has originally been established  \cite[Theorem 6.7]{carey4}.

\subsection{Sections of $n$-Bundles and Trivializations}

\label{trivcsdet}

According to Stolz and Teichner, a trivialization of an extended TFT consists of \quot{sections} into the respective  $n$-bundles with connections; see   \erf{quote1}. Next we will describe what a  section of an $n$-bundle with connection is in terms of our model, bundle $n$-gerbes with connection. The results are summarized in Figure \ref{fig:triv}.

\begin{figure}
\begin{center}
\begin{tabular}{|p{0.2cm}|p{3.1cm}|p{5cm}|p{3.4cm}|}\hline
$n$ & Object & Section & over a point $x$ \\\hline\hline
0 &  function $f$ & $\R$-valued smooth function that exponentiates to $f$ & $t\in \R$ with\hfill\mbox{} $\exp(2\pi\im t)= f(x)$ \\\hline
1& $\ueins$-bundle $P$ & smooth section of $P$  & point $t\in P_x$ \\\hline
2& bundle gerbe $\mathcal{G}$ & trivialization of $\mathcal{G}$ with compatible connection & Morita equivalence  $\mathcal{G}_x \cong B\ueins$ \\\hline
3& bundle 2-gerbe $\mathbb{G}$ &  trivialization of $\mathbb{G}$ with  compatible connection & (omitted) \\\hline
\end{tabular}
\end{center}
\caption{Sections of $n$-bundles with connection. }
\label{fig:triv}
\end{figure}

The case $n=0$ is somewhat exceptional --- also in \cite{stolz1}.  There, a section of a smooth function $f:X \to \ueins$ is by definition a smooth function  $g:X \to \R$ such that $f = \exp(2\pi\im g)$. For $n=1$, the prescription is literally true: a section of a principal $\ueins$-bundle $P$ over $X$ with connection is just a (smooth) section. It is important to notice that this section is \emph{not} required to be flat.

For $n=2,3$ it is not a priori clear what the correct notion of a section of a bundle gerbe or a bundle 2-gerbe is.  We have to impose  additional constraints. A plausible condition is that sections are certain morphisms in the categories $\nbuncon nX$ and that the transgression functors $\mathscr{T}_{X^d}$  send sections to sections. Indeed, for the cases $n=0,1$ above this is true: consider a section $\sigma$ of a principal $\ueins$-bundle $P$ with connection $\omega$. It can be considered as a connection-preserving morphism between $P$ and the trivial bundle $\trivlin_{\sigma^{*}\omega}$ equipped with the connection 1-form $\sigma^{*}\omega$. Transgressing such a morphism gives
\begin{equation*}
\mathrm{Hol}_{\omega}(\tau) = \exp \left ( 2\pi\im \int_{\tau} \sigma^{*}\omega \right )
\end{equation*}
for all loops $\tau\in LM$, which means exactly that the function $g(\tau) := \int_{\tau} \sigma^{*}\omega$  is a section of $\mathscr{T}_{S^1}(P,\omega) = \mathrm{Hol}_{\omega}$.

The requirement that sections are morphisms in $\nbuncon nX$ enforces us to define a \emph{section of a bundle $n$-gerbe with connection} to be a  trivialization with compatible connection. Indeed, for a bundle gerbe $\mathcal{G}$ with connection, such a trivialization is the same as an isomorphism $\mathcal{T}:\mathcal{G} \to \mathcal{I}_{\rho}$ with compatible connection (see Definition \ref{conpres}), for a 2-form $\rho$, as noticed in \cite[Section 3.1]{waldorf1}. For a bundle 2-gerbe, an analogous statement is explained in Section \ref{trivcon}; see \erf{trivasmorph}. Since we have introduced transgression as a \emph{functor}, and trivial bundle $n$-gerbes transgress to trivial bundle $(n-1)$-gerbes, it follows that sections transgress to sections. We have summarized our notion of sections in Figure \ref{fig:triv}.

\begin{definition}
\label{def:trivcs}
Let $\mathbb{G}$ be a bundle 2-gerbe with connection over $M$ and let $Z_{\mathbb{G}}$ be the extended Chern-Simons theory over $M$ in the sense of \erf{eq:defcs}. A \emph{trivialization of $Z_{\mathbb{G}}$} assigns to each oriented closed manifold $X^d$ of dimension $0\leq d\leq n$ a section $\mathcal{S}(X^d)$ of $Z_{\mathbb{G}}(X^d)$. 
\end{definition}

Naturally, in a more advanced version of extended TFTs trivializations would show a specific behaviour on manifolds with boundaries.  Also, we have to take isomorphism classes of trivializations because sections of bundle $n$-gerbes are morphisms in an $n$-groupoid, whereas the values $Z_{\mathbb{G}}(X^d)$ are objects in the (truncated) category $\nbuncon {(n-d)}{X^d}$. 
We obtain an obvious map
\begin{equation*}
\mathscr{S}: \bigset{4.3cm}{Isomorphism classes of trivializations of $\mathbb{G}$ with compatible connection} \to
\bigset{5.4cm}{Isomorphism classes of trivializations of the extended Chern-Simons theory $Z_{\mathbb{G}}$}
\end{equation*}
defined by
\begin{equation*}
\mathscr{S}(\mathbb{T})(X^d) := \mathscr{T}_{X^d}(\mathbb{T})\text{.}
\end{equation*}
This is the map  announced in Section \ref{sumstringcon} that compares (for $\mathbb{G}=(\mathbb{CS}_P$,$\nablaa$)) our notion of geometric string structures with the one of Stolz and Teichner in Theorem \ref{stringtrivcs}.  The first part of the proof of Theorem \ref{stringtrivcs} is as follows.

\begin{lemma}
\label{lem:inj}
The map $\mathscr{S}$ is injective. 
\end{lemma}

\begin{proof}
Let $\mathbb{T}_1$ and $\mathbb{T}_2$ be two trivializations with compatible connections, and assume that $\mathscr{S}(\mathbb{T}_1)=\mathscr{S}(\mathbb{T}_2)$. Over the point $X^0$, we have for $k=0,1$\text{,}
\begin{equation*} 
\mathscr{S}(\mathbb{T}_k)(X^0) = \mathscr{T}_{X^0}(\mathbb{T}_k) = \ev^{*}\mathbb{T}_k\text{.}
\end{equation*}
So the equality $\mathscr{S}(\mathbb{T}_1)(X^0) = \mathscr{S}(\mathbb{T}_2)(X^0)$ implies that the isomorphism classes of $\ev^{*}\mathbb{T}_1$ and $\ev^{*}\mathbb{T}_2$ are equal. Since $\ev$ is a diffeomorphism, this implies $\mathbb{T}_1 \cong \mathbb{T}_2$. 
\end{proof}

Finally, we want to formulate an assumption under which the map $\mathscr{S}$ is also surjective. We say that the \emph{cobordism hypothesis holds for the Chern-Simons theory $Z_\mathbb{G}$} if $Z_{\mathbb{G}}$ and all  trivializations $\mathcal{S}$ of $Z_{\mathbb{G}}$ are determined by their value on the point. That means, in particular, if $\mathcal{S}_1$ and $\mathcal{S}_2$ are trivializations of $Z_{\mathbb{G}}$ such that $\mathcal{S}_1(X^0)=\mathcal{S}_2(X^0)$, then $\mathcal{S}_1=\mathcal{S}_2$.

Whether or not the  cobordism hypothesis does hold for $Z_{\mathbb{G}}$  cannot be answered unless the exact assignments and conditions are formulated, which $Z_{\mathbb{G}}$ has for manifolds with boundaries and corners. As mentioned above, this has been done neither here nor in  \cite{stolz1}. However, I want to bring up two arguments that might make our assumption plausible.
\begin{enumerate}
\item 
Whichever way the behaviour of $Z_{\mathbb{G}}$ for manifolds with boundary is formulated, it should fit into Lurie's definition of extended topological field theories \cite[Definition 1.2.13]{lurie1}. But in that context, Lurie  \emph{proved} the cobordism hypothesis \cite[Theorem 1.2.16]{lurie1}.

\item
A 3-dimensional extended TFT $Z$ in the sense of Lurie with $Z(X^0) = \mathbb{G}$ exists. This also follows from the cobordism hypothesis \cite[Theorem 1.2.16]{lurie1} and the fact that every bundle 2-gerbe $\mathbb{G}$ with connection is a \emph{fully dualizable object} in the $3$-groupoid of bundle 2-gerbes with connection (it is even \emph{invertible}, indicating that the corresponding TFT is a classical one).

\end{enumerate}
The main purpose of the assumption that the cobordism hypothesis holds for $Z_{\mathbb{G}}$ is to provide a basis for the following.

\begin{lemma}
\label{lem:surj}
Under the assumption that the cobordism hypothesis holds for the extended Chern-Simons theory $Z_{\mathbb{G}}$, the map $\mathscr{S}$ is surjective. \end{lemma}

\begin{proof}
Suppose $\mathcal{S}$ is a trivialization of $Z_{\mathbb{G}}$. Consider its value $\mathcal{S}(X^0)$, which is a section of $\mathscr{T}_{X^0}(\mathbb{G})=\ev^{*}\mathbb{G}$. Since $\ev$ is a diffeomorphism, we may put $\mathbb{T} := (\ev^{-1})^{*}\mathcal{S}(X^0)$ which is a section of $\mathbb{G}$, i.e. a trivialization with compatible connection. Then, $\mathscr{S}(\mathbb{T}) = \mathcal{S}$. 
\end{proof}

\section{Background on Bundle Gerbes}

\label{background}

This section introduces some of the basic definitions concerning bundle gerbes and connections on bundle gerbes on the basis of \cite{waldorf1}. There are a few new results about  spaces of connections on bundle gerbes and isomorphisms.

\subsection{Bundle Gerbes}

\label{bundlegerbes}

Let $M$ be a smooth manifold. We recall that a \emph{covering} is a surjective submersion, and we refer to Section \ref{cstwogerbe} for our conventions concerning fibre products and the labelling of projections.  

\begin{definition}[{\cite[Section 3]{murray}}]
\label{bundlegerbe}
A \emph{bundle gerbe} over $M$ is a covering $\pi:Y \to M$ together with a principal $\ueins$-bundle $P$ over $Y^{[2]}$ and a bundle isomorphism
\begin{equation*}
\mu: \pi_{12}^{*}P \otimes \pi_{23}^{*}P \to \pi_{13}^{*}P
\end{equation*}
over $Y^{[3]}$ which is associative over $Y^{[4]}$.
\end{definition}

The notion of an isomorphism between bundle gerbes took some time to develop; most appropriate for our purposes is the following generalization of a \quot{stable isomorphism}.
We consider two bundle gerbes $\mathcal{G}_1$ and $\mathcal{G}_2$ over $M$, whose structure is denoted in the same way as in Definition \ref{bundlegerbe} but with indices $1$ or $2$.

\begin{definition}[{\cite[Definition 2]{waldorf1}}]
\label{defiso}
An \emph{isomorphism} $\mathcal{A}:\mathcal{G}_1 \to \mathcal{G}_2$ is a covering $\zeta: Z \to Y_1 \times_M Y_2$ together with a principal $\ueins$-bundle $Q$ over $Z$ and a bundle isomorphism
\begin{equation}
\label{isoalpha}
\alpha: P_1 \otimes \zeta_2^{*}Q \to \zeta_1^{*}Q \otimes P_2
\end{equation}
over $Z \times_M Z$, which satisfies a compatibility condition with $\mu_1$ and $\mu_2$. 
\end{definition}

In order to make the notation less complicated we fix the convention that we suppress writing pullbacks along  maps whenever it is clear which map is meant. For example, in  \erf{isoalpha} the bundles $P_1$ and $P_2$ are understood to be pulled back along the evident maps $Z \times_M Z \to Y_k^{[2]}$. 

The first example of an isomorphism is the identity isomorphism $\id_{\mathcal{G}}$ of a bundle gerbe $\mathcal{G}$. It has the identity covering $\zeta:=\id_{Y^{[2]}}$, the principal bundle $Q := P$ and an isomorphism $\alpha$ defined from the isomorphism $\mu$ in a straightforward way. 

Suppose that $\mathcal{A}_1$ and $\mathcal{A}_2$ are two isomorphisms from $\mathcal{G}_1$ to $\mathcal{G}_2$. 
\begin{definition}[{\cite[Definition 3]{waldorf1}}]
\label{def:trans}
A \emph{transformation}  $\beta: \mathcal{A}_1 \Rightarrow \mathcal{A}_2$ is a covering \begin{equation*}
k: V \to Z_1 \times_{(Y_1 \times_M Y_2)} Z_2
\end{equation*}
together with a bundle isomorphism $\beta_V: Q_1 \to Q_2$ between the pullbacks of the bundles of $\mathcal{A}_1$ and $\mathcal{A}_2$ to $V$, which satisfies a compatibility condition with the isomorphisms $\alpha_1$ and $\alpha_2$.
\end{definition}

Additionally, transformations $(V_1,\beta_{V_1})$ and $(V_2,\beta_{V_2})$ are identified whenever the bundle isomorphisms $\beta_{V_1}$ and $\beta_{V_2}$ agree after being pulled back to the fibre product of $V_1$ and $V_2$. 

All the operations that turn bundle gerbes, isomorphisms and transformations into a monoidal 2-groupoid are straightforward to find. For example, the tensor product of two bundle gerbes $\mathcal{G}_1$ and $\mathcal{G}_2$ over $M$ has the covering $Z := Y_1 \times_M Y_2 \to M$ and over $Z^{[2]}$ the principal $\ueins$-bundle $P_1 \otimes P_2$, where again our convention of suppressing evident maps from the notation is employed. The tensor unit is the \emph{trivial bundle gerbe} $\mathcal{I}$, which has the identity covering $\id_M$, the trivial principal $\ueins$-bundle, and whose isomorphism $\mu$  is fibrewise multiplication. For a full treatment, we refer to Section 1 of \cite{waldorf1}.

In Section \ref{bundlegerbesact} we also need \emph{duals} of bundle gerbes. To every bundle gerbe $\mathcal{G}$ one assigns a \emph{dual bundle gerbe} $\mathcal{G}^{*}$, and similar to every isomorphism $\mathcal{A}:\mathcal{G} \to \mathcal{H}$ a \emph{dual isomorphism} $\mathcal{A}^{*}: \mathcal{H}^{*} \to \mathcal{G}^{*}$, and to every transformation $\beta: \mathcal{A}_1 \Rightarrow \mathcal{A}_2$ a \emph{dual transformation} $\beta^{*}: \mathcal{A}_2^{*} \Rightarrow \mathcal{A}_1^{*}$. Complete definitions can be found in \cite[Section 1.3]{waldorf1}. Basically, the dual bundle gerbe $\mathcal{G}^{*}$ has the dual principal $\ueins$-bundle $P^{*}$ (i.e. the same set but with $\ueins$ acting by inverses). The dual isomorphism $\mathcal{A}^{*}$ has the \emph{same} principal $\ueins$-bundle $Q$ as before, and the dual transformation also has the same bundle isomorphism as before.

There is an   isomorphism $\mathcal{D}_{\mathcal{G}}: \mathcal{G}^{*} \otimes \mathcal{G} \to \mathcal{I}$ that expresses that $\mathcal{G}^{*}$ is dual to $\mathcal{G}$.  It has the identity covering $\zeta := \id_{Y^{[2]}}$, the dual bundle $Q:=P^{*}$, and its isomorphism is  defined from the bundle isomorphism $\mu$; see \cite[Section 1.2]{waldorf4}. In Section \ref{bundlegerbesact} we need two properties of $\mathcal{D}_{\mathcal{G}}$.  The first is the existence of a  transformation
\begin{equation*}
\alxydim{@R=1.7cm}{\mathcal{G}^{*} \otimes \mathcal{G} \ar[dr]_{\mathcal{D}_{\mathcal{G}}}="1" \ar[rr]^-{\mathcal{A}^{*-1} \otimes \mathcal{A}} && \mathcal{H}^{*} \otimes \mathcal{H} \ar@{=>}"1"|-*+{\varphi_{\mathcal{A}}} \ar[dl] ^{\mathcal{D}_{\mathcal{H}}} \\ & \mathcal{I}&}
\end{equation*}
associated to every isomorphism $\mathcal{A}:\mathcal{G} \to \mathcal{H}$. It can be seen as a \quot{naturality} property of $\mathcal{D}_{\mathcal{G}}$.  The second property is the existence of a  transformation between the isomorphism
\begin{equation*}
\alxydim{@C=1.5cm}{\mathcal{G} \ar[r]^-{\id \otimes \mathcal{D}_{\mathcal{G}}} & \mathcal{G} \otimes \mathcal{G}^{*} \otimes \mathcal{G} \ar[r]^-{\mathcal{D}_{\mathcal{G}^{*}} \otimes \id} & \mathcal{G}}
\end{equation*}
and the identity isomorphism $\id_{\mathcal{G}}$. It can be seen as a \quot{zigzag} property for $\mathcal{D}_{\mathcal{G}}$. The definitions of these two transformations are easy to find once one has the one of $\mathcal{D}_{\mathcal{G}}$ at hand. 

\subsection{Connections}

\label{cons}

Let $\mathcal{G}=(Y,P,\mu)$ be a bundle gerbe over $M$.

\begin{definition}[{\cite[Section 6]{murray}}]
\label{bundlegerbecon}
A \emph{connection} on $\mathcal{G}$ is a 2-form $C \in \Omega^2(Y)$ and a connection $\omega$ on $P$ of curvature $\mathrm{curv}(\omega)= \pi_2^{*}C - \pi_1^{*}C$, such that $\mu$ is connection-preserving.
\end{definition}

We have already mentioned  that every bundle gerbe admits a connection (see Lemma \ref{conex}). One can see this by choosing some connection on $P$, some 2-form $C$, and then  correcting them using the following result of Murray.
\begin{lemma}[{\cite[Section 8]{murray}}]
\label{exact}
Let $\pi:Y \to M$ be a surjective submersion and $p\geq 0$ be an integer. Consider \begin{equation*}
\delta_\pi :=  \sum_{i=1}^{k} (-1)^{k} \pi_{1,...,i-1,i+1,...,k}^{*}: \Omega^p(Y^{[k-1]}) \to \Omega^p(Y^{[k]})\text{.}
\end{equation*}
Then, the sequence
\begin{equation*}
\alxydim{@C=1.2cm}{0 \ar[r] & \Omega^p(M) \ar[r]^{\pi^{*}=\delta_{\pi}} & \Omega^p(Y) \ar[r]^-{\delta_\pi} & \Omega^p(Y^{[2]}) \ar[r]^-{\delta_\pi} & \hdots}
\end{equation*}
is exact.
\end{lemma}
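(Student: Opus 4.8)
The plan is to prove exactness by exhibiting a contracting homotopy for $\delta_{\pi}$, assembled from local sections of $\pi$ and a subordinate partition of unity; exactness in every degree then falls out of a single homotopy identity. Before that I would dispose of the two formal points. That the sequence is a complex: any two of the face projections $Y^{[k]} \longrightarrow Y^{[k-1]}$ become equal after composing down to $M$, so the consecutive applications of $\delta_{\pi}$ telescope to zero by the usual simplicial cancellation. That $\pi^{*}$ is injective: a surjective submersion admits local sections $s$, and $s^{*}\pi^{*} = \mathrm{id}$ on forms forces $\pi^{*}\beta = 0$ to be locally, hence globally, zero.

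For the homotopy, since $\pi$ is a surjective submersion I would choose an open cover $\{U_{\alpha}\}$ of $M$ with local sections $s_{\alpha}\colon U_{\alpha} \longrightarrow Y$ and a partition of unity $\{\rho_{\alpha}\}$ subordinate to it. Over $U_{\alpha}$ define the insertion map $\sigma_{\alpha}$ sending $(y_{0},\dots,y_{k-2})$ in $Y^{[k-1]}$ to $(s_{\alpha}(\pi(y_{0})),y_{0},\dots,y_{k-2})$ in $Y^{[k]}$, and set
\begin{equation*}
h(\psi) := \sum_{\alpha} (\pi^{*}\rho_{\alpha}) \cdot \sigma_{\alpha}^{*}\psi \in \Omega^{p}(Y^{[k-1]}),
\end{equation*}
each summand extended by zero outside $\pi^{-1}(U_{\alpha})$ (this is legitimate because $\mathrm{supp}(\pi^{*}\rho_{\alpha})$ is closed and contained in the region where $\sigma_{\alpha}$ is defined). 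The decisive structural feature is that the weights $\pi^{*}\rho_{\alpha}$ are pulled back from $M$ and are therefore invariant under every face projection, so they factor out of all pullbacks and the computation reduces to a purely simplicial one, performed summand by summand.

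I would then verify $\delta_{\pi}h + h\delta_{\pi} = \mathrm{id}$ using the two simplicial identities for the extra degeneracy $\sigma_{\alpha}$: the face that omits the inserted factor satisfies $\partial_{0}\circ\sigma_{\alpha} = \mathrm{id}$, while the remaining faces satisfy $\partial_{i}\circ\sigma_{\alpha} = \sigma_{\alpha}\circ\partial_{i-1}$. In $h\delta_{\pi}\psi$ the term coming from $\partial_{0}\circ\sigma_{\alpha}$ collapses to $\sum_{\alpha}(\pi^{*}\rho_{\alpha})\psi = \psi$, using $\sum_{\alpha}\rho_{\alpha}=1$; its remaining terms, via the shift identity, match the terms of $\delta_{\pi}h\psi$ one for one and cancel by the alternating signs. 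The identity holds in each degree $k\geq 1$; at $k=1$ the operator $h$ specializes to $\beta \mapsto \sum_{\alpha}\rho_{\alpha}\,s_{\alpha}^{*}\beta \in \Omega^{p}(M)$ and reads $\pi^{*}(h\beta) + h(\delta_{\pi}\beta) = \beta$, so a $\delta_{\pi}$-closed $\beta\in\Omega^{p}(Y)$ lies in the image of $\pi^{*}$, giving exactness at the bottom; for $k\geq 2$ the same identity shows every closed form is exact.

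The main obstacle is genuinely the globalization, not any single computation: a surjective submersion need not carry a global section, so the insertion maps live only over the $U_{\alpha}$, and the entire argument leans on the basic weights $\pi^{*}\rho_{\alpha}$ doing double duty — patching the locally defined operators into a globally defined $h$, and, because they descend from $M$, passing unchanged through the face pullbacks so that the simplicial identities may be applied termwise. The only place demanding real care is the bookkeeping of the telescoping cancellation, keeping the signs and the index shift $\partial_{i}\leftrightarrow\partial_{i-1}$ correctly aligned between the two composites $\delta_{\pi}h$ and $h\delta_{\pi}$; everything else is formal.
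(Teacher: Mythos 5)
Your proof is correct and takes essentially the same route as the proof the paper points to (\cite{murray}, Sec.~8): Murray establishes exactness by precisely this contracting homotopy, built from local sections $s_\alpha$ of the surjective submersion and a partition of unity $\rho_\alpha$ on $M$, so that $\delta_\pi h + h\delta_\pi = \mathrm{id}$ in every degree. Your additional care about extending each summand by zero using the support of $\pi^{*}\rho_\alpha$, and the observation that the identity $\partial_i\circ\sigma_\alpha=\sigma_\alpha\circ\partial_{i-1}$ holds only because all entries of a point of the fibre product share the same basepoint in $M$, merely makes explicit details that the cited argument leaves implicit.
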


For example, we see that for a connection $(C,\omega)$ on a bundle gerbe $\mathcal{G}$,
\begin{equation*}
\delta_{\pi}\mathrm{d}C = \pi_2^{*}\mathrm{d}C - \pi_1^{*}\mathrm{d}C = \mathrm{d}\mathrm{curv}(\omega)=0\text{.}
\end{equation*}
This means that $\mathrm{d}C$ is the pullback of a unique 3-form $H \in \Omega^3(M)$. This 3-form is the \emph{curvature} of the connection $(C,\omega)$.
We can now  give the following proof. 

\begin{proofblank}{Proof of Lemma \ref{affine} from Section \ref{exclass}}
We have to show that the set of connections on $\mathcal{G}$ is an affine space over the vector space
\begin{equation*}
V_{\mathcal{G}} := \left ( \Omega^2(M) \oplus \Omega^1(Y)  \right ) / \left ( \mathrm{d} \oplus \pi^{*}  \right )\Omega^1(M)\text{.}
\end{equation*}
We shall first define the action and then show that it is free and transitive.
\begin{enumerate}
\item 
If $(C,\omega)$ is a connection on $\mathcal{G}$, and $(\eta,\varphi)\in V_{\mathcal{G}}$, we have a new connection $(C',\omega')$ defined by $C' := C + \mathrm{d}\varphi - \pi^{*}\eta$ and $\omega' := \omega + \delta_\pi\varphi$. Here we recall our convention according to which $\delta\varphi$ is understood to be pulled back to the total space of $P$. Indeed, 
\begin{equation*}
\mathrm{curv}(\omega') = \mathrm{curv}(\omega) + \mathrm{d}\delta_\pi\varphi =\delta _\pi(C + \mathrm{d}\varphi) = \delta_\pi(C' + \pi^{*}\eta)=\delta_\pi C'\text{,}
\end{equation*}
which is the first condition. Further, since $\delta_\pi(\delta_\pi\varphi)=0$, the isomorphism $\mu$ over $Y^{[3]}$ preserves $\omega'$.
This shows that $(C',\omega')$ is again a connection on $\mathcal{G}$. It is clear that this action is well-defined on the quotient $V_{\mathcal{G}}$. 

\item

The action  is free: suppose $(\eta,\varphi) \in V_{\mathcal{G}}$ acts trivially. Then, $\delta_\pi\varphi=0$ so that there exists $\nu\in\Omega^1(M)$ with $\pi^{*}\nu=\varphi$ by Lemma \ref{exact}. Further, $\mathrm{d}\varphi - \pi^{*}\eta = 0$ which implies $\pi^{*}(\mathrm{d}\nu - \eta)=0$. Since $\pi^{*}$ is injective by Lemma \ref{exact}, $\eta=\mathrm{d}\nu$. Thus, $(\eta,\varphi)=0$ in the quotient space $V_{\mathcal{G}}$.

\item

The action is transitive: suppose $(C,\omega)$ and $(C',\omega')$ are connections. Then, there is a 1-form $\psi\in\Omega^1(Y^{[2]})$ such that $\omega' = \omega + \psi$. Since $\mu$ is connection-preserving for both $\omega$ and $\omega'$, we see that $\delta_\pi\psi=0$. By Lemma \ref{exact}, there is a 1-form $\varphi\in\Omega^1(Y)$ such that $\delta_\pi\varphi=\psi$. We compute
\begin{equation*}
\delta_\pi(C'-C - \mathrm{d}\psi) = \mathrm{curv}(\omega') - \mathrm{curv}(\omega) - \delta_\pi\mathrm{d}\psi = \mathrm{d}\psi -  \mathrm{d}\delta_\pi\psi = 0\text{,}
\end{equation*}
which means that there exists $\eta\in\Omega^2(M)$ with $-\pi^{*}\eta=C'-C-\mathrm{d}\psi$. Then, the action of $(\eta,\varphi)\in V_{\mathcal{G}}$  takes $(C,\omega)$ to $(C',\omega')$. 
\end{enumerate}
We also remark that the action of an element $(\eta,\varphi)\in V_{\mathcal{G}}$ changes the curvature of $\mathcal{G}$ by $\mathrm{d}\eta$.
\end{proofblank}

Next is the discussion of connections on isomorphisms. 

\begin{definition}
\label{conpres}
Let $\mathcal{A}:\mathcal{G}_1 \to \mathcal{G}_2$ be an isomorphism between bundle gerbes over $M$. 
\begin{enumerate}
\item 
A \emph{connection} on $\mathcal{A}$ is a connection $\kappa$ on its principal $\ueins$-bundle $Q$. 

\item
Suppose $\nabla_1=(C_1,\omega_1)$ and $\nabla_2=(C_2,\omega_2)$ are connections on $\mathcal{G}_1$ and $\mathcal{G}_2$. Then, a connection $\kappa$ on $\mathcal{A}$ is called \emph{compatible} with $\nabla_1$ and $\nabla_2$, if 
$\mathrm{curv}(\kappa) = \zeta^{*}(C_2 - C_1)$ and $\alpha$ is connection-preserving.

\end{enumerate}
\end{definition}

Now we see immediately the claim of Lemma \ref{affineiso}: the (non-compatible) connections on $\mathcal{A}$ are an affine space over the 1-forms on $Z$.  

The next lemma shows how to pullback a bundle gerbe connection along an isomorphism. 

\begin{lemma}
\label{conpull}
Suppose $\mathcal{A}:\mathcal{G}_1 \to \mathcal{G}_2$ is a 1-isomorphism, and $\nabla_2$ is a connection on $\mathcal{G}_2$. Then, there exists a connection $\kappa$ on $\mathcal{A}$ and a connection $\nabla_1$ on $\mathcal{G}_1$, such that $\kappa$ is compatible with $\nabla_1$ and $\nabla_2$.
\end{lemma}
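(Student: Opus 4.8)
The plan is to start from arbitrary connections and then correct them in two stages, using the exactness of Lemma \ref{exact} to solve the descent problems that arise. Write $\zeta:Z\to Y_1\times_M Y_2$ for the covering of $\mathcal{A}$, let $q_i:Z\to Y_i$ be the induced projections, and let $\pi_Z:Z\to M$ be the resulting covering of $M$. First I would equip everything with crude connections: by Lemma \ref{conex} there is a connection $\nabla_1=(C_1,\omega_1)$ on $\mathcal{G}_1$, and since $Q$ is an ordinary principal $\ueins$-bundle over $Z$ it carries a connection $\kappa$. Neither of the two compatibility conditions of Definition \ref{conpres} will hold for this choice, and the remaining steps measure and remove the two defects separately.

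The second step is to make $\alpha$ connection-preserving. Over $Z\times_M Z$ the isomorphism $\alpha:P_1\otimes\zeta_2^{*}Q\to\zeta_1^{*}Q\otimes P_2$ compares the connection induced by $\omega_1$ and $\kappa$ on its source with the one induced by $\kappa$ and $\omega_2$ on its target; since $\alpha$ covers the identity and all bundles are $\ueins$-bundles, its failure to be connection-preserving is a single $1$-form $\varepsilon\in\Omega^1(Z\times_M Z)$. The key point is that the compatibility of $\alpha$ with $\mu_1$ and $\mu_2$ from Definition \ref{defiso}, combined with the fact that $\mu_1$ and $\mu_2$ are connection-preserving (part of the data of $\nabla_1$ and $\nabla_2$), forces $\varepsilon$ to be a cocycle for the complex of Lemma \ref{exact} associated to $\pi_Z$, i.e. $\delta\varepsilon=0$ over the triple fibre product. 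Exactness then produces $\epsilon\in\Omega^1(Z)$ with $\delta\epsilon=\varepsilon$, and correcting $\kappa$ by $\epsilon$ (allowed by Lemma \ref{affineiso}) makes $\alpha$ connection-preserving.

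The third step arranges the curvature condition $\mathrm{curv}(\kappa)=\zeta^{*}(C_2-C_1)$. Once $\alpha$ is connection-preserving, the curvatures of its source and target agree; substituting the bundle gerbe identities $\mathrm{curv}(\omega_i)=\pi_2^{*}C_i-\pi_1^{*}C_i$ and pulling back to $Z\times_M Z$ shows that $\beta:=q_1^{*}C_1+\mathrm{curv}(\kappa)-q_2^{*}C_2\in\Omega^2(Z)$ satisfies $\delta\beta=0$. By exactness there is a unique $B\in\Omega^2(M)$ with $\beta=\pi_Z^{*}B$. Correcting $C_1$ by $\pi_1^{*}B$ changes $\beta$ to $0$, hence delivers $\mathrm{curv}(\kappa)=\zeta^{*}(C_2-C_1)$; crucially it leaves $\omega_1$ unchanged—because $\pi_1^{*}B$ lies in the kernel of $\delta_{\pi_1}$—so $\mathrm{curv}(\omega_1)=\delta_{\pi_1}C_1$ and the connection-preservation of $\mu_1$ and of $\alpha$ are undisturbed. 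This yields a connection $\nabla_1$ on $\mathcal{G}_1$ and a connection $\kappa$ on $\mathcal{A}$ that are compatible in the sense of Definition \ref{conpres}.

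The main obstacle is the cocycle identity $\delta\varepsilon=0$ in the second step: establishing it requires unwinding the compatibility axiom for $\alpha$ in Definition \ref{defiso} at the level of connection $1$-forms and verifying that the contributions of $\mu_1$ and $\mu_2$ cancel because both are connection-preserving. Once this is in hand, the two corrections are routine applications of Lemma \ref{exact}, and they do not interfere, since altering $C_1$ by a form pulled back from $M$ affects only the curvature condition and not the connection-preservation of $\alpha$.
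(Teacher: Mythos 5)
Your proposal is correct and follows essentially the same two-stage argument as the paper's proof: first correct the connection on $Q$ so that $\alpha$ becomes connection-preserving (using the compatibility of $\alpha$ with $\mu_1,\mu_2$ to get the cocycle identity, then Lemma \ref{exact} and Lemma \ref{affineiso}), and then fix the curvature condition by descending the defect 2-form $\mathrm{curv}(\kappa)-C_2+C_1$ along $Z\to M$ and shifting $C_1$ by a pullback from $M$, which leaves $\omega_1$ and hence the connection-preservation of $\alpha$ untouched. The only differences are notational (e.g.\ your $\varepsilon$, $\epsilon$, $\beta$, $B$ versus the paper's $\beta$, $\gamma$, $B$, $\eta$).
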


\begin{proof}
We choose any connection $(C_1,\omega_1)$ on the bundle gerbe $\mathcal{G}_1$, and any connection $\kappa$ on $\mathcal{A}$. We compare the pullback connection $\alpha^{*}(\zeta_1^{*}\kappa + \omega_2)$ with the connection $\omega_1 + \zeta_2^{*}\kappa$ on $P_1 \otimes \zeta_2^{*}Q$. They differ by a 1-form $\beta\in\Omega^1(Z^{[2]})$. The condition on $\alpha$ implies that $\delta \beta=0$ over $Z^{[3]}$, so that there exists a 1-form $\gamma\in\Omega^1(Z)$ with $\beta=\delta\gamma$. Operating with $\gamma$ on $\kappa$, we obtain a new connection $\kappa'$ on $\mathcal{A}$ such that $\alpha$ is connection-preserving. Consider now the 2-form
\begin{equation*}
B := \mathrm{curv}(\kappa') - C_2 + C_1 \in \Omega^2(Z)\text{.}
\end{equation*}  
One readily computes $\delta B=0$ over $Z^{[2]}$ so that there exists a 2-form $\eta \in \Omega^2(M)$ such that $B = \pi^{*}\eta$. Operating with $\eta$ on the connection $(C_1,\omega_1)$ yields a new connection on $\mathcal{G}_1$ for which $\kappa'$ is compatible.
\end{proof}

Next, we need to pull back connections on isomorphisms along transformations.
We say that a transformation $\beta: \mathcal{A} \Rightarrow \mathcal{A}'$ between isomorphisms with connections is connection-preserving, if the bundle isomorphism $\beta_V$ is connection-preserving. 

\begin{lemma}
\label{compconpullback}
Let $\mathcal{G}_1$ and $\mathcal{G}_2$ be bundle gerbes with connection, $\mathcal{A}:\mathcal{G}_1 \to \mathcal{G}_2$ and $\mathcal{A}':\mathcal{G}_1 \to \mathcal{G}_2$ be  isomorphisms, and $\beta: \mathcal{A} \Rightarrow \mathcal{A}'$ is a transformation. Suppose $\kappa'$ is a compatible connection on $\mathcal{A}'$.  Then, there exists a unique compatible connection on $\mathcal{A}$ such that $\beta$ is connection-preserving.
\end{lemma}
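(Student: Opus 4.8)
The guiding observation is that the condition \quot{$\beta$ is connection-preserving} leaves no freedom. Write the covering of $\beta$ as $k\colon V \to Z \times_{(Y_1\times_M Y_2)} Z'$, let $\beta_V$ be the underlying bundle isomorphism between the pullbacks to $V$ of the two bundles $Q$ and $Q'$, and let $k_1\colon V \to Z$ and $k_2\colon V \to Z'$ denote the compositions of $k$ with the two fibre-product projections. Then being connection-preserving means exactly $k_1^{*}\kappa_0 = \beta_V^{*}(k_2^{*}\kappa)$. Thus the whole content of the lemma is that this equation has a unique solution $\kappa_0$ among the connections on $Q$, and that the solution is compatible with $\nabla_1=(C_1,\omega_1)$ and $\nabla_2=(C_2,\omega_2)$.

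First I would settle uniqueness. The projection $\mathrm{pr}\colon Z\times_{(Y_1\times_M Y_2)}Z' \to Z$ is a pullback of the covering $\zeta'$ and hence a surjective submersion, so $k_1=\mathrm{pr}\circ k$ is one as well. Since any two connections on $Q$ differ by a $1$-form on $Z$ (Lemma \ref{affineiso}) and $k_1^{*}$ is injective on $\Omega^1(Z)$ by Lemma \ref{exact}, the displayed equation determines $\kappa_0$ uniquely, whenever it is solvable.

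For existence I would choose an arbitrary connection $\kappa_1$ on $Q$ and form the error $1$-form $\theta := \beta_V^{*}(k_2^{*}\kappa) - k_1^{*}\kappa_1 \in \Omega^1(V)$, which measures the failure of $\beta_V$ to be connection-preserving. By Lemma \ref{exact}, $\theta$ is of the form $k_1^{*}\lambda$ for a unique $\lambda\in\Omega^1(Z)$ precisely when $\delta_{k_1}\theta=0$ over $V\times_Z V$; setting $\kappa_0:=\kappa_1+\lambda$ then solves the equation. The vanishing of $\delta_{k_1}\theta$ is the technical heart, and I expect it to be the main obstacle: it fails for a generic $\beta_V$ and must be extracted from the defining compatibility of the transformation $\beta$ with the isomorphisms $\alpha$ and $\alpha'$. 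Concretely, I would restrict that compatibility axiom to the sublocus $V\times_Z V$ of pairs with equal $Z$-component, where the pulled-back bundles $P_1$ and $P_2$ carry their canonical diagonal trivializations and $\alpha,\alpha'$ degenerate accordingly; this turns the $2$-morphism axiom into exactly the cocycle relation $\delta_{k_1}\theta=0$.

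It remains to verify that the descended connection $\kappa_0$ is compatible in the sense of Definition \ref{conpres}. For the curvature condition I would use that points of the fibre product satisfy $\zeta\circ k_1=\zeta'\circ k_2$: combining $\mathrm{curv}(k_1^{*}\kappa_0)=\mathrm{curv}(\beta_V^{*}k_2^{*}\kappa)=k_2^{*}\mathrm{curv}(\kappa)$ with $\mathrm{curv}(\kappa)=\zeta'^{*}(C_2-C_1)$ gives $k_1^{*}\mathrm{curv}(\kappa_0)=k_1^{*}\zeta^{*}(C_2-C_1)$, whence $\mathrm{curv}(\kappa_0)=\zeta^{*}(C_2-C_1)$ by injectivity of $k_1^{*}$. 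That $\alpha$ is connection-preserving for $\kappa_0$ follows from the same compatibility diagram, now read at the level of connection $1$-forms over the fibre product of $V$ with itself on which the axiom is posed: since $\beta_V$ is connection-preserving by construction and $\alpha'$ is connection-preserving because $\kappa$ is compatible, the diagram forces the error $1$-form of $\alpha$ to pull back to zero, and injectivity (Lemma \ref{exact}) yields that it vanishes. Together with the uniqueness established above, this completes the argument.
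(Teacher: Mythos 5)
Your proposal is correct and takes essentially the same approach as the paper, whose entire proof is the single sentence that the desired connection \quot{is just the pullback of the connection $\kappa$ on $Q'$ along the bundle isomorphism $\beta_V\colon Q \to Q'$}. Your argument is a faithful elaboration of that one-liner: the identity $k_1^{*}\kappa_0=\beta_V^{*}(k_2^{*}\kappa)$ is precisely what pullback along $\beta_V$ means once one accounts for the covering $V$, and the descent step (extracting $\delta_{k_1}\theta=0$ from the compatibility axiom together with the connection-preservation of $\alpha'$), the uniqueness via injectivity of $k_1^{*}$ (Lemma \ref{exact}), and the final compatibility checks are exactly the details the paper leaves implicit.
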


\begin{proof}
Suppose $\kappa$ is a compatible connection on $\mathcal{A}$. We consider the pullback connection $k^{*}\kappa$ on $k^{*}Q$ for the projection $k: V \to Z \times_{(Y_1 \times_M Y_2)} Z'$ from Definition \ref{def:trans}. The assumption that $\beta$ is connection-preserving is $k^{*}\kappa = \beta^{*}k^{*}\kappa'$, and since $k$ is a surjective submersion, this fixes $\kappa$ uniquely (see Lemma \ref{exact}). To prove the existence, we define a connection $\tilde\kappa := \beta^{*}k^{*}\kappa'$ on $k^{*}Q$. Then, the compatibility of $\kappa'$ together with the relation between the isomorphism $\beta$ and the isomorphisms $\alpha$ and $\alpha'$ that belong to $\mathcal{A}$ and $\mathcal{A}'$, respectively, show that $\tilde\kappa$ descends to a compatible connection $\kappa$ on $\mathcal{A}$.
\end{proof}

Finally, let me mention that all additional structure on the 2-groupoid $\ugrb M$ that we have listed in the previous section, also exists for the 2-groupoid $\ugrbcon M$  formed by bundle gerbes over $M$ with connection, isomorphisms with compatible connections and connection-preserving transformations; see \cite{waldorf1}.

\section{Technical Details}

\label{tech}

In this section we provide  the proofs of all remaining lemmata.

\subsection{Lemma \ref{trivgroupoid}: Trivializations form a 2-Groupoid}

\label{triv2cat}

We prove Lemma \ref{trivgroupoid}: we define a 2-groupoid $\triv{\mathbb{G}}$ of trivializations of a bundle 2-gerbe $\mathbb{G}$. The bundle 2-gerbe $\mathbb{G}$ may consist of a covering $\pi:Y \to M$, a bundle gerbe $\mathcal{P}$ over $Y^{[2]}$, a product $\mathcal{M}$ over $Y^{[3]}$ and an associator $\mu$ over $Y^{[4]}$. 

We recall from Definition \ref{deftriv} that a trivialization $\mathbb{T}$ of $\mathbb{G}$ consists of a bundle gerbe $\mathcal{S}$ over $Y$, an isomorphism $\mathcal{A}: \mathcal{P} \otimes \pi_2^{*}\mathcal{S} \to \pi_1^{*}\mathcal{S}$ over $Y^{[2]}$, and a transformation $\sigma$. Given   trivializations $\mathbb{T}=(\mathcal{S},\mathcal{A},\sigma)$ and $\mathbb{T}'=(\mathcal{S}',\mathcal{A}',\sigma')$ of $\mathbb{G}$, a \emph{1-morphism} $\mathbb{B}: \mathbb{T} \to \mathbb{T}'$ is an isomorphism $\mathcal{B}: \mathcal{S} \to \mathcal{S}'$ between bundle gerbes over $Y$ together with a transformation
\begin{equation*}
\alxydim{@=1.1cm}{\mathcal{P} \otimes \pi_2^{*}\mathcal{S} \ar[r]^-{\mathcal{A}} \ar[d]_{\id \otimes \pi_2^{*}\mathcal{B}} & \pi_1^{*}\mathcal{S} \ar@{=>}[dl]|*+{\beta} \ar[d]^{\pi_1^{*}\mathcal{B}} \\ \mathcal{P} \otimes \pi_2^{*}\mathcal{S}' \ar[r]_-{\mathcal{A}'} & \pi_1^{*}\mathcal{S}'}
\end{equation*} 
over $Y^{[2]}$ which is compatible with the transformations $\sigma$ and $\sigma'$ in the sense of the pentagon diagram shown in Figure \ref{compmorph}.

\begin{figure}[t]
\begin{footnotesize}
\begin{equation*}
\def\ausgl{0.15cm}
\alxydim{@C=-3.7cm@R=2.5cm}{&&\pi_{12}^{*}\mathcal{A}' \circ (\id \otimes \pi_2^{*}\mathcal{B}) \circ (\id \otimes \pi_{23}^{*}\mathcal{A}) \ar@{=>}[dll]_-*+{\pi_{12}^{*}\beta \circ \id} \ar@{=>}[drr]^-*+{\id \circ \pi_{23}^{*}\beta}&&\\\hspace{\ausgl}\pi_1^{*}\mathcal{B} \circ \pi_{12}^{*}\mathcal{A}' \circ (\id \otimes \pi_{23}^{*}\mathcal{A})\hspace{\ausgl} \ar@{=>}[dr]_-*+{\id \circ \sigma} &&&&\pi_{12}^{*}\mathcal{A}' \circ (\id \otimes \pi_{23}^{*}\mathcal{A}') \circ (\id \otimes \id \otimes \pi_3^{*}\mathcal{B}) \ar@{=>}[dl]^-*+{\sigma' \circ \id}\\& \pi_1^{*}\mathcal{B} \circ (\id \otimes \pi_{13}^{*}\mathcal{A}) \circ (\mathcal{M} \otimes \id)  \ar@{=>}[rr]_-*+{\pi_{13}^{*}\beta \circ \id} &\hspace{9cm}& \pi_{13}^{*}\mathcal{A}' \circ (\mathcal{M} \otimes \pi_3^{*}\mathcal{B})\hspace{\ausgl}&}
\end{equation*}
\end{footnotesize}
\caption{The compatibility between the transformations $\sigma$ and $\sigma'$ of two trivializations $\mathbb{T}$ and $\mathbb{T}'$ and the transformation $\beta$ of a 1-morphism $\mathbb{B}=(\mathcal{B},\beta)$ between $\mathbb{T}$ and $\mathbb{T}'$. It is an equality of transformations over $Y^{[3]}$.}
\label{compmorph}
\end{figure}

The identity 1-morphism as well as the composition between 1-morphisms are straightforward to find using the structure of the 2-groupoid of bundle gerbes.

If $\mathbb{B}_1=(\mathcal{B}_1,\beta_1)$ and $\mathbb{B}_2=(\mathcal{B}_2,\beta_2)$ are both 1-morphisms between $\mathbb{T}$ and $\mathbb{T}'$, a \emph{2-morphism} is a transformation $\varphi: \mathcal{B}_1 \Rightarrow \mathcal{B}_2$ which is compatible with the transformations $\beta_1$ and $\beta_2$ in such a way  that the diagram
\begin{equation*}
\alxydim{@=1.3cm}{\pi_1^{*}\mathcal{B}_1 \circ \mathcal{A} \ar@{=>}[d]_{\pi_1^{*}\varphi \circ \id} \ar@{=>}[r]^-{\beta_1} & \mathcal{A}' \circ (\id \otimes \pi_2^{*}\mathcal{B}_1) \ar@{=>}[d]^{\id \circ (\id \otimes \pi_2^{*}\varphi)} \\ \pi_1^{*}\mathcal{B}_2 \circ \mathcal{A} \ar@{=>}[r]_-{\beta_2} & \mathcal{A}' \circ (\id \otimes \pi_2^{*}\mathcal{B}_2)}
\end{equation*}
is commutative. Horizontal and vertical composition of 2-morphisms are the ones of the 2-groupoid of bundle gerbes.

It is clear that every 2-morphism is invertible, since every transformation is invertible. In the same way, every 1-morphism is invertible (up to 2-morphisms), since every 1-isomorphism between bundle gerbes is invertible up to a transformation.

The axioms of the 2-groupoid $\triv {\mathbb{G}}$ can easily be deduced from those of the 2-groupoid of bundle gerbes. 

\begin{remark}
\label{catgens}
If $\nabla$ is a connection on the bundle 2-gerbe $\mathbb{G}$, it is straightforward to repeat the above definitions in the 2-category of bundle gerbe \emph{with connection}. Explicitly, all bundle gerbes are equipped with connections, all isomorphisms with compatible connections, and all transformations are connection-preserving. The result is again a 2-groupoid $\triv{\mathbb{G},\nabla}$ whose objects are the trivializations with connection compatible with $\nabla$.
\end{remark}

\subsection{Lemma \ref{trivclass}: Bundle Gerbes act on Trivializations}

\label{bundlegerbesact}

We  exhibit the 2-groupoid $\triv {\mathbb{G}}$ of trivializations of a bundle 2-gerbe $\mathbb{G}$ over $M$ as a module for the 2-category $\ugrb M$ of bundle gerbes over $M$. The module structure is a strict 2-functor
\begin{equation}
\label{action}
\triv {\mathbb{G}} \times \ugrb M \to \triv {\mathbb{G}}
\end{equation}
satisfying the usual axioms in a strict way.

We remark that all results of this section generalize analogous results for an action of principal $\ueins$-bundles over $M$ on trivializations of bundle (1-)gerbes;  see \cite[Theorem 2.5.4]{waldorf4} and references therein.

If $\mathbb{T}=(\mathcal{S},\mathcal{A},\sigma)$ is a trivialization and $\mathcal{K}$ is a bundle gerbe over $M$, we obtain a new trivialization $\mathbb{T}.\mathcal{K}$ consisting of the bundle gerbe $\mathcal{S} \otimes \pi^{*}\mathcal{K}$ over $Y$. Since $\pi_1^{*}\pi^{*}\mathcal{K} = \pi_2^{*}\pi^{*}\mathcal{K}$, its isomorphism is simply
\begin{equation*}
\mathcal{A} \otimes \id: \mathcal{P} \otimes \pi_2^{*}\mathcal{S} \otimes \pi_2^{*}\pi^{*}\mathcal{K} \to \pi_1^{*}\mathcal{S} \otimes \pi_1^{*}\pi^{*}\mathcal{K}\text{.}
\end{equation*}
In the same way, its transformation is $\sigma \otimes \id$. If $\mathbb{B}=(\mathcal{B},\beta):\mathbb{T} \to \mathbb{T}'$ is a 1-morphism between trivializations and $\mathcal{J}:\mathcal{K} \to \mathcal{K}'$ is an isomorphism between bundle gerbes, we obtain a new 1-morphism 
\begin{equation*}
\mathbb{B}.\mathcal{J}: \mathbb{T}.\mathcal{K} \to \mathbb{T}'.\mathcal{K}'
\end{equation*}
consisting of the isomorphism $\mathcal{B} \otimes \pi^{*}\mathcal{J}: \mathcal{S} \otimes \pi^{*}\mathcal{K}\to \mathcal{S}' \otimes \pi^{*}\mathcal{K}'$ and of the transformation $\beta \otimes \id$. Finally, if $\varphi: \mathbb{B}\Rightarrow \mathbb{B}'$ is a 2-morphism between trivializations, and $\phi: \mathcal{J} \Rightarrow \mathcal{J}'$ is a transformation between isomorphisms of bundle gerbes over $M$, we have a new 2-morphism
\begin{equation*}
\varphi.\phi: \mathbb{B}.\mathcal{J} \Rightarrow \mathbb{B}'.\mathcal{J}'
\end{equation*} 
simply defined by $\varphi\otimes \pi^{*}\phi$. The compatibility condition for the transformation $\varphi \otimes \pi^{*}\phi$ is satisfied since $\phi$ drops out due to $\pi_1^{*}\pi^{*}\phi=\pi_2^{*}\pi^{*}\phi$ over $Y^{[2]}$.

Summarizing, the action of bundle gerbes on trivializations is a combination of the pullback $\pi^{*}$ and the tensor product of the monoidal 2-category of bundle gerbes. From this point of view, all the axioms of the action 2-functor
\erf{action} follow from those of the monoidal structure.
It is also immediately clear that a genuine action on isomorphism classes is induced. It remains to show that this action is free and transitive. 

To see that the action is free, assume that there exists a 1-morphism $\mathbb{T}.\mathcal{K} \to \mathbb{T}$ for $\mathbb{T}$ a trivialization of $\mathbb{G}$ and $\mathcal{K}$ an isomorphism. This implies
\begin{equation*}
\mathrm{DD}(\mathcal{S}) + \pi^{*}\mathrm{DD}(\mathcal{K}) = \mathrm{DD}(\mathcal{S})\text{,}
\end{equation*}
so that $\pi^{*}\mathrm{DD}(\mathcal{K})=0$. Since $\pi$ is a covering, $\mathrm{DD}(\mathcal{K})=0$. Thus, $\mathcal{K}$ is a trivial bundle gerbe up to isomorphism.  

To see that the action is transitive we infer that bundle gerbes form a 2-stack over smooth manifolds. The gluing property of this 2-stack has been shown in \cite[Proposition 6.7]{stevenson1}. We also  use the duality on the 2-groupoid of bundle gerbes (see Section \ref{bundlegerbes}).

Suppose $\mathbb{T}_1=(\mathcal{S}_1,\mathcal{A}_1,\sigma_1)$ and $\mathbb{T}_2=(\mathcal{S}_2,\mathcal{A}_2,\sigma_2)$ are trivializations of a bundle 2-gerbe $\mathbb{G}$. We will show that the bundle gerbe $\mathcal{G} := \mathcal{S}_1^{*} \otimes \mathcal{S}_2$ over $Y$ is a descent object for the 2-stack of bundle gerbes: there is an isomorphism $\mathcal{J}: \pi_2^{*}\mathcal{G} \to \pi_1^{*}\mathcal{G}$ of bundle gerbes over $Y^{[2]}$ and a 2-isomorphism 
\begin{equation*}
\varphi: \pi_{12}^{*}\mathcal{J} \circ \pi_{23}^{*}\mathcal{J} \Rightarrow \pi_{13}^{*}\mathcal{J}
\end{equation*}
over $Y^{[3]}$ which satisfies an associativity condition over $Y^{[4]}$. Then, the gluing property implies the existence of a bundle gerbe $\mathcal{K}$ over $M$, of an isomorphism $\mathcal{C}:\pi^{*}\mathcal{K} \to \mathcal{G}$ and of a transformation
\begin{equation*}
\gamma: \pi_1^{*}\mathcal{C} \Rightarrow \mathcal{J} \circ \pi_2^{*}\mathcal{C}
\end{equation*}
such that the diagram
\begin{equation}
\label{compgamma}
\alxydim{@=1.4cm}{\pi_{12}^{*}\mathcal{J} \circ \pi_2^{*}\mathcal{C} \ar@{=>}[r]^-{\id \circ \pi_{23}^{*}\gamma}& \pi_{12}^{*}\mathcal{J} \circ \pi_{23}^{*}\mathcal{J} \circ \pi_3^{*}\mathcal{C} \ar@{=>}[d]^{\varphi \circ \id}\\\pi_1^{*}\mathcal{C} \ar@{=>}[r]_-{\pi_{13}^{*}\gamma}  \ar@{=>}[u]^-{\pi_{12}^{*}\gamma} &  \pi_{13}^{*}\mathcal{J} \circ \pi_3^{*}\mathcal{C}}
\end{equation} 
is commutative. We will then finish the proof of the transitivity by showing that $(\mathcal{C},\gamma)$ gives rise to  a 1-morphism $\mathbb{T}_1.\mathcal{K} \to \mathbb{T}_2$. 

Let us first define the descent data $(\mathcal{J},\varphi)$ for $\mathcal{G}=\mathcal{S}_1^{*} \otimes \mathcal{S}_2$.
The isomorphism $\mathcal{J}$ is defined as the composition
\begin{equation*}
\alxydim{}{\pi_2(\mathcal{S}_1^{*} \otimes \mathcal{S}_2) \ar@{=}[r] & \pi_2^{*}\mathcal{S}_1^{*} \otimes \mathcal{I} \otimes \pi_2^{*}\mathcal{S}_2 \ar[d]^{\id \otimes \mathcal{D}_{\mathcal{P}}^{-1} \otimes \id} \\ & \pi_2^{*}\mathcal{S}_1^{*} \otimes \mathcal{P}^{*} \otimes \mathcal{P} \otimes \pi_2^{*}\mathcal{S}_2 \ar[d]^{\mathcal{A}_1^{*-1} \otimes \mathcal{A}_2} \\ & \pi_1^{*}\mathcal{S}_1^{*} \otimes   \pi_1^{*}\mathcal{S}_2 \ar@{=}[r] & \pi_1^{*}(\mathcal{S}_1^{*} \otimes \mathcal{S}_2)\text{.}}
\end{equation*}
The transformation $\varphi$ is defined using the transformations $\sigma_1$ and $\sigma_2$, namely as the composition
\begin{small}
\begin{equation*}
\alxydim{@C=0.55cm}{\pi_{12}^{*}\mathcal{J} \circ \pi_{23}^{*}\mathcal{J} \ar@{=>}[r] & (\pi_{12}^{*}\mathcal{A}_1^{*-1} \otimes \pi_{12}^{*}\mathcal{A}_2) \circ (\pi_{23}^{*}\mathcal{A}_1^{*-1} \otimes \pi_{23}^{*}\mathcal{A}_2) \circ \mathcal{D}^{-1}_{\pi_{12}^{*}\mathcal{P} \otimes \pi_{23}^{*}\mathcal{P}} \ar@{=>}[d]^{\sigma_1^{*-1} \otimes \sigma_2} \\ & (\pi_{13}^{*}\mathcal{A}_1^{*-1} \otimes \pi_{13}^{*}\mathcal{A}_2) \circ (\mathcal{M}^{*-1} \otimes \mathcal{M}) \circ \mathcal{D}^{-1}_{\pi_{12}^{*}\mathcal{P} \otimes \pi_{23}^{*}\mathcal{P}} \ar@{=>}[d]^{\text{naturality of $\mathcal{D}$ w.r.t. }\mathcal{M}} \\ & (\pi_{13}^{*}\mathcal{A}_1^{*-1} \otimes \pi_{13}^{*}\mathcal{A}_2) \circ \mathcal{D}^{-1}_{\pi_{13}^{*}\mathcal{P}} \ar@{=}[r] & \pi_{13}^{*}\mathcal{J}\text{.}}
\end{equation*}
\end{small}Here, the first arrow summarizes transformations that come from the monoidal structure on the 2-category of bundle gerbes, and that are used to commute tensor products with composition.
The associativity condition for $\varphi$  follows from the compatibility of $\sigma_1$ and $\sigma_2$ with the associator $\mu$ of $\mathbb{G}$ (see Figure \ref{compass} on page \pageref{compass}).

By the gluing axiom, we  now have a bundle gerbe $\mathcal{K}$, an isomorphism $\mathcal{C}$, and a transformation $\gamma$ as claimed above. We define an isomorphism $\mathcal{B}: \mathcal{S}_1 \otimes \pi^{*}\mathcal{K} \to \mathcal{S}_2$ as the composition
\begin{equation*}
\alxydim{@C=1.4cm}{\mathcal{S}_1 \otimes \pi^{*}\mathcal{K} \ar[r]^-{\id \otimes \mathcal{C}} & \mathcal{S}_1 \otimes \mathcal{S}_1^{*}\otimes \mathcal{S}_2 \ar[r]^-{\mathcal{D}_{\mathcal{S}^{*}_1} \otimes \id} & \mathcal{I} \otimes \mathcal{S}_2 = \mathcal{S}_2\text{.}}
\end{equation*}
By a similar argument one can produce a transformation
\begin{equation*}
\alxydim{@C=1.4cm@R=1.4cm}{\mathcal{P} \otimes \pi_2^{*}\mathcal{S}_1 \otimes \pi_2^{*}\pi^{*}\mathcal{K} \ar[d]_{\id \otimes \pi_2^{*}\mathcal{B}} \ar[r]^-{\mathcal{A}_1 \otimes \id} & \pi_1^{*}\mathcal{S}_1 \otimes \pi_1^{*}\pi^{*}\mathcal{K} \ar@{=>}[dl]|*+{\beta} \ar[d]^{\pi_1^{*}\mathcal{B}} \\ \mathcal{P} \otimes \pi_2^{*}\mathcal{S}_2 \ar[r]_{\mathcal{A}_2} & \pi_1^{*}\mathcal{S}_2}
\end{equation*}
using $\gamma$. More precisely, $\beta$ is defined as
\begin{small}
\begin{eqnarray*}
&\xymatrix{\pi_1^{*}\mathcal{B} \circ (\mathcal{A}_1 \otimes \id) \ar@{=}[r] & (\mathcal{D}_{\pi_1^{*}\mathcal{S}^{*}_1} \otimes \id) \circ (\mathcal{A}_1 \otimes \id^{\otimes 2}) \circ (\id^{\otimes2} \otimes \pi_1^{*}\mathcal{C})}\hspace{3cm}&
\\[-\bigskipamount]
&\xymatrix@C=5cm{&\ar@{=>}[d]^{\id \circ \id\circ (\id^{\otimes2} \otimes \gamma)}&\\&~&}& \\[-\medskipamount]
&(\mathcal{D}_{\pi_1^{*}\mathcal{S}^{*}_1} \otimes \id) \circ (\mathcal{A}_1 \otimes \id^{\otimes 2}) \circ (\id^{\otimes2} \otimes \mathcal{J}) \circ (\id^{\otimes2} \otimes \pi_2^{*}\mathcal{C}) &
\\[-\bigskipamount]
&\xymatrix@C=5cm{&\ar@{=>}[d]^{\text{Def. of $\mathcal{J}$}}& \\ &~&}&
\\[-\medskipamount]
&(\mathcal{D}_{\pi_1^{*}\mathcal{S}_1^{*}} \otimes \id) \circ (\mathcal{A}_1 \otimes \mathcal{A}_1^{*-1} \otimes \id) \circ (\id^{\otimes4} \otimes \mathcal{A}_{2}) \circ (\id^{\otimes3} \otimes \mathcal{D}^{-1}_{\mathcal{P}} \otimes \id) \circ (\id^{\otimes2} \otimes \pi_2^{*}\mathcal{C})&
\\[-\bigskipamount]
&\xymatrix@C=5cm{&\ar@{=>}[d]^{\text{naturality of $\mathcal{D}$ applied to $\mathcal{A}_1$}}&\\ &~&}&
\\[-\medskipamount]
&(\mathcal{D}_{\pi_2^{*}\mathcal{S}_1} \otimes \mathcal{D}_{\mathcal{P}^{*}})  \circ (\id^{\otimes4} \otimes \mathcal{A}_{2})\circ (\id^{\otimes3} \otimes \mathcal{D}^{-1}_{\mathcal{P}} \otimes \id) \circ (\id^{\otimes2} \otimes \pi_2^{*}\mathcal{C})&
\\[-\bigskipamount]
&\xymatrix@C=5cm{&\ar@{=>}[d]^{\text{compatibility between $\otimes$ and $\circ$}}&\\&~&}&
\\[-\medskipamount]
& (\mathcal{D}_{\mathcal{P}^{*}} \otimes \id)  \circ (\id^{\otimes 2} \otimes \mathcal{A}_{2}) \circ (\id^{\otimes3} \otimes \mathcal{D}^{-1}_{\mathcal{P}} \otimes \id) \circ (\id \otimes \mathcal{D}_{\pi_2^{*}\mathcal{S}_1^{*}} \otimes \id) \circ (\id^{\otimes2} \otimes \pi_2^{*}\mathcal{C})&
\\[-\bigskipamount]
&\xymatrix@C=5cm{&\ar@{=>}[d]^{\text{zigzag for $\mathcal{D}_{\mathcal{P}}$}} &\\&~&}&
\\[-\medskipamount]
&\hspace{2cm}\xymatrix{\mathcal{A}_{2} \circ(\id \otimes \mathcal{D}_{\pi_2^{*}\mathcal{S}^{*}_1} \otimes \id)
                       \circ (\id^{\otimes2} \otimes \pi_2^{*}\mathcal{C}) \ar@{=}[r] &\mathcal{A}_2
                       \circ (\id \otimes \pi_2^{*}\mathcal{B})\text{.}}&
\end{eqnarray*}
\end{small}Finally, one can deduce from the commutativity of \erf{compgamma} and the definition of the transformation $\varphi$ that $\beta$ is compatible with the transformations $\sigma_1$ and $\sigma_2$ in the sense of Figure \ref{compmorph}. Hence, $(\mathcal{B},\beta)$ is a 1-morphism from $\mathbb{T}_1.\mathcal{K}$ to $\mathbb{T}_2$.

\begin{remark}
\label{actgens}
All constructions and results of this section can straightforwardly be generalized to bundle gerbes \emph{with connection} in the sense of Remark \ref{catgens}. The module structure is then a strict 2-functor
\begin{equation*}
\triv {\mathbb{G},\nabla} \times \ugrbcon M \to \triv {\mathbb{G},\nabla}\text{.}
\end{equation*}
In the proof that the action is free, one substitutes characteristic degree three classes \erf{grbconclass} in differential cohomology  
for the  Dixmier-Douady classes.
In the  proof that the action is transitive, one uses that bundle gerbes \emph{with connection} also form a 2-stack: the gluing condition  is shown in \cite[Section 3.3]{nikolaus1}.
\end{remark}

\subsection{Lemma \ref{geomstrex}: Existence of Trivializations with Connection}

\label{trivcon}

We show that every bundle 2-gerbe $\mathbb{G}$ with connection and vanishing characteristic class in $\h^4(M,\Z)$ admits a trivialization with compatible connection. For the proof we use the fact that bundle 2-gerbes are classified up to isomorphism by degree four differential cohomology $\hat\h^4(M,\Z)$. This cohomology group fits into the exact sequence 
\begin{equation}
\label{delex}
\alxydim{@C=1.2cm}{\Omega^3(M) \ar[r] & \hat\h^4(M,\Z) \ar[r]^-{\mathrm{CC}} & \h^4(M,\Z) \ar[r] & 0\text{.}}
\end{equation}
Suppose $\mathbb{G}$ is a bundle 2-gerbe with connection $\nabla$ and with vanishing characteristic  class $\mathrm{CC}(\mathbb{G})$. By  exactness of \erf{delex}, it is isomorphic to a certain bundle 2-gerbe $\mathbb{I}$ with connection $\nabla_H$ defined by a 3-form $H \in \Omega^3(M)$. We will show that such an isomorphism is precisely a trivialization of $\mathbb{G}$ with connection compatible with $\nabla$. This  proves Lemma \ref{geomstrex}.

Let us first describe the bundle 2-gerbe $\mathbb{I}$ and the connection $\nabla_H$ which is associated to any 3-form $H$ on $M$. The covering of $\mathbb{I}$ is the identity $\id:M \to M$ whose fibre products we can identify with $M$ itself. Its bundle gerbe is the trivial bundle gerbe $\mathcal{I}$, which is the tensor unit of the monoidal 2-category $\ugrb M$. Its product is the identity $\id: \mathcal{I} \otimes \mathcal{I} \to \mathcal{I}$, and its associator is also the identity. The only non-trivial information is the connection $\nabla_{H}$. It consists simply of the 3-form $H$; the bundle gerbe $\mathcal{I}$ and the product of $\mathbb{I}$ carry  trivial connections.

Next we give a brief definition of an isomorphism between two bundle 2-gerbes $\mathbb{G}_1$ and $\mathbb{G}_2$. It is a straightforward generalization of the notion of an isomorphism between bundle gerbes (see Definition \ref{defiso}). An \emph{isomorphism} $\mathbb{A}:\mathbb{G}_1 \to \mathbb{G}_2$ consists of a bundle gerbe $\mathcal{S}$ over a covering $\zeta: Z \to Y_1 \times_M Y_2$, an isomorphism
\begin{equation*}
\mathcal{A}: \mathcal{P}_1 \otimes \zeta_2^{*}\mathcal{S} \to \zeta_1^{*}\mathcal{S} \otimes \mathcal{P}_2
\end{equation*}
between bundle gerbes over $Z \times_M Z$, 
and a transformation $\sigma$ which expresses the compatibility between $\mathcal{A}$ and the products $\mathcal{M}_1$ and $\mathcal{M}_2$. This transformation
has to satisfy an evident coherence condition involving the associators $\mu_1$ and $\mu_2$ of the two bundle 2-gerbes. 

If the bundle 2-gerbes $\mathbb{G}_1$ and $\mathbb{G}_2$ are equipped with connections, we say that a \emph{compatible connection} on the isomorphism $\mathbb{A}$ is a connection on the bundle gerbe $\mathcal{S}$ of curvature
\begin{equation*}
\mathrm{curv}(S) = \pi_2^{*}B_2 - \pi_1^{*}B_1\text{,}
\end{equation*}
and a compatible connection on the isomorphism $\mathcal{A}$ such that $\sigma$ is connection-preserving. 

The claimed relation to differential cohomology (realized by Deligne cohomology) is established in \cite[Proposition 4.2]{johnson1} in terms of a bijection
\begin{equation*}
\bigset{5.7cm}{Bundle 2-gerbes over $M$ with connection, up to isomorphisms with compatible connection} \cong \hat\h^4(M,\Z)\text{.}
\end{equation*} 

Comparing the definition of an isomorphism between bundle 2-gerbes and Definition \ref{deftriv} of a trivialization makes it obvious that a trivialization $\mathbb{T}$ of $\mathbb{G}$ is the same thing as an isomorphism $\mathbb{T}:\mathbb{G} \to \mathbb{I}$. This coincidence generalizes to a setup with compatible connections: there is a bijection
\begin{equation}
\label{trivasmorph}
\bigset{5.5cm}{Isomorphisms $\mathbb{T}:\mathbb{G} \to \mathbb{I}$ with connection compatible with $\nabla$ and $\nabla_H$ for some 3-form $H$} \cong \bigset{3.4cm}{Trivializations of $\mathbb{G}$ with connection compatible with $\nabla$}
\end{equation}
 for every bundle 2-gerbe $\mathbb{G}$ and any connection $\nabla$ on $\mathbb{G}$.

\subsection{Lemma \ref{conind}: Connections on Trivializations pull back}

\label{conpullback}

Let $\mathbb{G}$ be a bundle 2-gerbe with connection, and let $\mathbb{T}=(\mathcal{S},\mathcal{A},\sigma)$ and $\mathbb{T}'=(\mathcal{S}',\mathcal{A}',\sigma')$ be trivializations of $\mathbb{G}$. We prove that one can pull back a compatible  connection $\babla'$ on $\mathbb{T}'$ along any 1-morphism $\mathbb{B}:\mathbb{T} \to \mathbb{T}'$ to a compatible connection on $\mathbb{T}$.

We recall that a compatible connection on $\mathbb{T}'$ is a pair $\babla'=(\nabla',\omega')$ with $\nabla'$ a connection on $\mathcal{S}'$ and $\omega'$ a connection on $\mathcal{A}'$, such that $\sigma'$ is connection-preserving.

By Lemma \ref{conpull}, there exists a connection $\kappa$ on the isomorphism $\mathcal{B}:\mathcal{S} \to \mathcal{S}'$ and a connection  on $\mathcal{S}$ such that $\kappa$ is compatible. Next we need a connection on the isomorphism $\mathcal{A}$. We look at the transformation
\begin{equation}
\label{compconpulltrans}
\alxydim{@C=1.2cm}{\mathcal{A} \ar@{=>}[r]^-{\rho^{-1}_{\mathcal{A}}} & \id \circ \mathcal{A} \ar@{=>}[r]^-{\pi_1^{*}i_r \otimes \id} & \pi_1^{*}\mathcal{B} \circ \pi_1^{*}\mathcal{B}^{-1} \circ \mathcal{A}\ar@{=>}[r]^-{\id \circ \beta} & \pi_1^{*}\mathcal{B}^{-1} \circ \mathcal{A}' \circ \pi_2^{*}\mathcal{B}\text{.}}
\end{equation}
Here we have used the  transformation $\rho_{\mathcal{A}}: \id \circ \mathcal{A} \Rightarrow \mathcal{A}$ which belongs to the structure of the 2-groupoid of bundle gerbes \cite[Section 1.2]{waldorf1}, and the  transformation $i_r: \id \Rightarrow \mathcal{B} \circ \mathcal{B}^{-1}$ which expresses the invertibility of the isomorphism $\mathcal{B}$ \cite[Section 1.3]{waldorf1}. Notice that the target isomorphism of \erf{compconpulltrans} is equipped with a compatible connection. Thus, by Lemma \ref{compconpullback}, this connection pulls back to a compatible connection on $\mathcal{A}$. Furthermore, since the transformations \erf{compconpulltrans}, $\rho_{\mathcal{A}}$ and $i_r$ are connection-preserving,  the transformation $\id \circ \beta$ is also connection-preserving. This implies in turn that $\beta$ itself is connection-preserving.

It remains to check that the transformation $\sigma$ preserves connections. In order to see this, consider the commutative diagram of Figure \ref{compmorph}, which expresses the compatibility between $\sigma$, $\sigma'$ and $\beta$. Since all transformations that appear in this diagram are invertible, we can rearrange it as an equation
\begin{equation*}
\id \circ \sigma = (\pi_{13}^{*}\beta \circ \id)^{-1} \bullet (\sigma' \circ \id) \bullet (\id \circ \pi_{23}^{*}\beta) \bullet (\pi_{12}^{*}\beta \circ \id)^{-1}\text{,} 
\end{equation*} 
where $\bullet$ denotes the vertical composition of transformations. Now, since the right hand side of this equation is a connection-preserving transformation,  $\id \circ \sigma$ is also connection-preserving. Just as above, it follows that $\sigma$ is connection-preserving.

\subsection{Lemma \ref{lemaction1} (a): Well-definedness of the Action on compatible Connections}

\label{action1}

We prove that the action of the vector space $V_{\mathbb{T}}$ on the connections on a trivialization $\mathbb{T}$ of a bundle 2-gerbe $\mathbb{G}$ with connection takes compatible connections to compatible connections. 

The bundle 2-gerbe $\mathbb{G}$ has a covering $\pi:Y \to M$ and a bundle gerbe $\mathcal{P}$ over $Y^{[2]}$. The bundle gerbe $\mathcal{P}$  in turn has a covering $\chi:X \to Y^{[2]}$. The trivialization $\mathbb{T}$ has a bundle gerbe $\mathcal{S}$ over $Y$, an isomorphism $\mathcal{A}$ over $Y^{[2]}$ and a transformation $\sigma$. Its bundle gerbe $\mathcal{S}$ has a covering $\omega: W \to Y$. Expanding the definitions of tensor products and isomorphisms between bundle gerbes, the isomorphism $\mathcal{A}: \mathcal{P} \otimes \pi_2^{*}\mathcal{S} \to \pi_1^{*}\mathcal{S}$ comes with a covering 
\begin{equation*}
\zeta: Z \to X \times_{Y^{[2]}} (W \times_M W)\text{.}
\end{equation*}
The projections $x: Z \to X$ and $p:Z \to W \times_M W$ are again coverings. By construction, there is a commutative diagram:
\begin{equation}
\label{comdiag1}
\alxydim{@C=0.4cm@R=0.4cm}{&Z \ar[dd]^{p} && \\&&&\\ &W \times_M W \ar[dd]|{\omega \times \omega} \ar[dr]^-{p_1} \ar[dl]_-{p_2}&&\\W \ar[dd]_{\omega} &&W \ar[dd]^{\omega}&\\&Y^{[2]} \ar[dr]|{\pi_2} \ar[dl]|{\pi_1}&&\\Y \ar[dr]_{\pi}&&Y \ar[dl]^{\pi}&\\&M&&}
\end{equation}

We keep a connection on $\mathbb{G}$ fixed, and assume a compatible connection $\babla=(C,\omega,\kappa)$
 on $\mathbb{T}$, where $(C,\omega)$ is a connection on $\mathcal{S}$. Let $(\psi,\rho,\varphi) \in V_{\mathbb{T}}$ represent an element in the vector space that  acts on the set of  connections of $\mathbb{T}$. Its action on $\babla$ has been defined in Section \ref{exclass} to result in $\babla'=(C',\omega',\kappa')$ with
\begin{eqnarray}
\nonumber C'&=&C + \mathrm{d}\varphi- \omega^{*}(\mathrm{d}\rho - \pi^{*}\psi)\text{,} \\
\label{change}\omega' &=& \omega + \delta_{\omega}\varphi\text{,}\\
\nonumber \kappa'&=& \kappa + \epsilon\quad\text{ with }\quad \varepsilon := p^{*}(\delta_{\pi\circ\omega}(\varphi-\omega^{*}\rho))\text{.}
\end{eqnarray}
Here we have used the notation $\delta_{\pi\circ\omega}$, $\delta_{\omega}$ for the alternating sum over pullbacks along a surjective submersion, as explained in Lemma \ref{exact}.

Let us first check that $\kappa'$ is  a compatible connection on $\mathcal{A}$. The first condition is the equation
\begin{equation}
\label{1}
\mathrm{curv}(\kappa') = p^{*}p_2^{*}C' - (x^{*}C_{\mathcal{P}} + p^{*}p_1^{*}C')
\end{equation} 
of 2-forms over $Z$, where $C_{\mathcal{P}}$ is the 2-form of the connection on $\mathcal{P}$. This equation can be verified using the fact that $\kappa$ was assumed to be compatible, and using the commutative diagram \erf{comdiag1} to sort out the various pullbacks. The second condition for $\kappa'$  is that the isomorphism
\begin{equation*}
\alpha: (x^{*}P_{\mathcal{P}} \otimes w_1^{*}P_{\mathcal{S}}) \otimes \zeta_2^{*}Q \to \zeta_1^{*}Q \otimes w_2^{*}P_{\mathcal{S}}
\end{equation*}
of principal $\ueins$-bundles over $Z \times_{Y^{[2]}} Z$ is  connection-preserving. Here, $\zeta_1,\zeta_2: Z \times_{Y^{[2]}} Z \to Z$ denote the two projections, $x: Z \times_{Y^{[2]}} Z \to X \times_{Y^{[2]}}X$ is just the map $x$ from above on each factor, and $w_i:Z \times_{Y^{[2]}} Z \to W \times_Y W$ is the map  $p_i \circ p: Z \to W$  on each factor. There is a commutative diagram exploiting the various relations between these maps ($k=1,2$):
\begin{equation}
\label{diag2}
\alxydim{@R=0.7cm@C=0.1cm}{&&Z \times_{Y^{[2]}} Z \ar[ddll]_{\zeta_1} \ar[ddrr]^{\zeta_2} \ar[dd]|{w_k}&&\\&&&&\\Z \ar[dr]_{p_k} &&W \times_Y W \ar[dl]|{\omega_1}\ar[dr]|{\omega_2}&& Z \ar[dl]^{p_k} \\ &W \ar[dr]_{\omega}&&W \ar[dl]^{\omega}&\\&&Y&&}
\end{equation}
To check that $\alpha$ is connection-preserving, one verifies that the connections on both sides change by the same 1-form on $Z \times_{Y^{[2]}}Z$ under the action \erf{change}. On the left hand side, this is $w_1^{*}\delta_{\omega}\varphi + \zeta_2^{*} \epsilon$. On the right hand side, it is $\zeta_1^{*} \varepsilon\ + w_2^{*}\delta_{\omega}\varphi$. Using diagram \erf{diag2}, it is straightforward to check that these 1-forms coincide. 

The remaining check is that the transformation $\sigma$ is still connection-preserving. We note that $\sigma$ is some isomorphism of principal $\ueins$-bundles over a smooth manifold $V$. There are projections $k_s$ and $k_t$ into the covering spaces of the source isomorphism $\pi_{12}^{*}\mathcal{A} \circ (\id_{\pi_{12}^{*}\mathcal{P}} \otimes \pi_{23}^{*}\mathcal{A})$ and the target isomorphism $\pi_{13}^{*}\mathcal{A} \circ (\mathcal{M} \otimes \id_{\pi_{3}^{*}\mathcal{S}})$. We can safely ignore the contributions of the isomorphism $\mathcal{M}$ and of the identity isomorphism $\id_{\pi_{12}^{*}\mathcal{P}}$ in the following discussion, since the connections on $\mathcal{M}$ and on $\mathcal{P}$ did not change under our action. What we must not ignore is the identity isomorphism $\id_{\pi_3^{*}\mathcal{S}}$: its connection changes with the connection on $\mathcal{S}$!

After these premises, the two projections are $k_s: V \to Z \times_W Z$ and $k_t: V \to (W \times_Y W) \times_W Z$, with $Z$ the covering space of the isomorphism $\mathcal{A}$ and $W \times_Y W$ the covering space of the identity isomorphism. We claim that the following diagrams are commutative by construction:
\begin{small}
\begin{equation*}
\alxydim{@C=-0.3cm}{&V\ar[dl]_{k_s} \ar[dr]^{k_t}& \\ Z \ttimes W Z\ar[d]_{p\circ\zeta_1} && (W \ttimes Y W) \ttimes W Z\ar[d]^{\mathrm{pr}_1} \\ W \ttimes M W \ar[dr]_{\omega_1} && W \ttimes Y W \ar[dr]^{\omega_2} \ar[dl]|{\omega_1} \\   & W \ar[dr]_{\omega} && W \ar[dl]^{\omega} \\ &&Y}
\quad\quad
\alxydim{@C=-0.6cm}{&V \ar[d]^{k_s}& \\ &Z \ttimes W Z \ar[dl]_{\zeta_1} \ar[dr]^{\zeta_2}&\\Z \ar[d]_{p} && Z \ar[d]^{p} \\ W \ttimes M W \ar[dr]_{\omega_2} && W \ttimes M W \ar[dl]^{\omega_1}\\&W&}
\quad\quad
\alxydim{@C=-0.3cm}{&V \ar[dr]^{k_t} \ar[dl]_{k_s}&\\Z \ttimes W Z \ar[d]_{\zeta_2} && (W \ttimes Y W) \ttimes W Z \ar[d]^{\mathrm{pr}_2}\\Z \ar[d]_{p} && Z \ar[d]^{p}\\W \ttimes M W \ar[dr]_{\omega_2} && W \ttimes M W \ar[dl]^{\omega_2}\\ &W&}
\end{equation*} 
\end{small}
We compute the changes in the connections on the target and on the source isomorphism of $\sigma$. These are, respectively, the 1-forms
\begin{equation}
\label{coincone}
k_t^{*}(\mathrm{pr}_1^{*}\delta\varphi + \mathrm{pr}_2^{*}\varepsilon)
\quad\text{ and }\quad
k_s^{*}(\zeta_1^{*}\varepsilon + \zeta_2^{*}\varepsilon)
\end{equation}
over $V$. Using the diagrams above it is a straightforward calculation to check that these 1-forms coincide. Thus, $\sigma$ is a connection-preserving transformation. 

Summarizing, we have shown that the action of an element of $V_{\mathbb{T}}$ on a compatible connection $\babla$ on $\mathbb{T}$ is again a compatible connection $\babla'$.

\subsection{Lemma \ref{lemaction2} (b): The Action on compatible Connections is free and transitive}

\label{action2}

We prove that the action of the vector space $V_{\mathbb{T}}$ on the compatible connections of a bundle 2-gerbe $\mathbb{G}$ with connection is free and transitive.

Showing that the action is free is the easy part. We assume that an element $(\psi,\rho,\varphi)\in V_{\mathbb{T}}$ acts trivially on a connection $\babla=(C,\omega,\kappa)$ on a trivialization $\mathbb{T}=(\mathcal{S},\mathcal{A},\sigma)$. That means that the 1-form $\varepsilon = p^{*}\delta_{\pi\circ\omega}(\varphi-\omega^{*}\rho)$ vanishes and that $(\eta,\varphi) \in V_{\mathcal{S}}$ with $\eta = \mathrm{d}\rho - \pi^{*}\psi$ vanishes in $V_{\mathcal{S}}$. 

Since $p$ is a covering, the vanishing of $\varepsilon$ implies that already $\delta_{\pi\circ\omega}(\varphi-\omega^{*}\rho)=0$. By Lemma \ref{exact}, this implies the existence of a 1-form $\chi \in\Omega^1(M)$ such that (I) $\omega^{*}\pi^{*}\chi= \omega^{*}\rho - \varphi$. The vanishing of $(\eta,\varphi)$ implies the existence of a 1-form $\nu\in\Omega^1(Y)$ such that (II) $\eta=\mathrm{d}\nu$ and (III) $\varphi = \omega^{*}\nu$. (I) and (III) imply (IV) $\rho = \pi^{*}\chi + \nu$. (II) and (IV) imply (V) $\mathrm{d}\chi=\psi$.  Equations (V), (IV) and (III) show that $(\psi,\rho,\varphi)$ lies in the subspace $U$ we  divide out in Proposition \ref{trivconaff}. 

Now we prove that the action is transitive. We assume that $\babla=(C,\varphi,\kappa)$ and $\babla'=(C',\varphi',\kappa')$ are two compatible connections on a trivialization $\mathbb{T}$. From Lemmata \ref{affine} and  \ref{affineiso} we obtain $\epsilon \in \Omega^1(Z)$, $\varphi \in \Omega^1(W)$  and $\eta\in\Omega^2(Y)$ such that
\begin{equation*}
C'=C + \mathrm{d}\varphi- \omega^{*}\eta
\quad\text{, }\quad
\omega' = \omega + \delta_{\omega}\varphi
\quad\text{ and }\quad
\kappa'= \kappa + \epsilon\text{.}
\end{equation*}
First we consider the 1-form 
\begin{equation*}
\tilde\rho := \epsilon - p_2^{*}\varphi + p_1^{*}\varphi \in \Omega^1(Z)
\end{equation*}
with $p_k: Z \to W$ the projections from Section \ref{action1}. We denote the evident projection to the base space of $\mathcal{A}$ by $\ell:Z \to Y^{[2]}$. Using the identity 
\begin{equation*}
\zeta_2^{*} \epsilon=\zeta_1^{*} \varepsilon\ + w_2^{*}\delta_{\omega}\varphi-w_1^{*}\delta_{\omega}\varphi
\end{equation*}
that we have derived in Section \ref{action1}, it is straightforward to check that  
\begin{equation*}
\delta_{\ell}\tilde\rho =\zeta_2^{*}\tilde\rho - \zeta_1^{*}\tilde\rho = 0\text{,}
\end{equation*}
so that by Lemma \ref{exact}, there exists a 1-form $\rho'\in\Omega^1(Y^{[2]})$ such that $\ell^{*}\rho' = \tilde\rho$.

Now we denote by $V$ the covering space of the transformation $\sigma$, and we denote by $k:V \to Y^{[3]}$ the evident projection to the base space 
of the involved bundle gerbes.  Note that the following diagrams are commutative:
\begin{equation*}
\alxydim{}{V \ar[r]^{\zeta_2 \circ k_s} \ar[d]_{k} & Z \ar[d]^{\ell}\\ Y^{[3]} \ar[r]_{\pi_{21}} & Y^{[2]}}
\quad\quad
\alxydim{}{V \ar[r]^{\zeta_1 \circ k_s} \ar[d]_{k} & Z \ar[d]^{\ell}\\ Y^{[3]} \ar[r]_{\pi_{32}} & Y^{[2]}}
\quad\quad
\alxydim{}{V \ar[r]^{\mathrm{pr}_2 \circ k_t} \ar[d]_{k} & Z \ar[d]^{\ell}\\ Y^{[3]} \ar[r]_{\pi_{31}} & Y^{[2]}}
\quad\quad
\alxydim{@C=0.6cm}{V \ar[r]^{\mathrm{pr}_2 \circ k_t} \ar[d]_{\mathrm{pr}_1\circ k_t} & Z \ar[d]^{p_1}\\ W \ttimes Y W \ar[r]_-{\omega_2} & W\text{.}}
\end{equation*}
Using these diagrams and the coincidence of the 1-forms \erf{coincone}, one readily verifies
\begin{equation*}
k^{*}(\pi_{21}^{*}\rho' - \pi_{31}^{*}\rho' + \pi_{32}^{*}\rho') = k_s^{*}\zeta_2^{*}\tilde\rho - k_t^{*}\mathrm{pr}_2^{*}\tilde\rho  +k_s^{*}\zeta_1^{*}\tilde\rho = 0\text{,}
\end{equation*}
so that again by Lemma \ref{exact}, there exists a 1-form $\rho\in\Omega^1(Y)$ such that $\delta_{\pi}\rho=\rho'$.

Finally we consider $\psi' := \mathrm{d}\rho - \eta \in \Omega^2(Y)$. We compute
\begin{equation*}
\ell^{*}(\pi_2^{*}\psi' - \pi_1^{*}\psi') = 0\text{,}
\end{equation*}
using the remaining condition \erf{1}. Thus, we find a 1-form $\psi \in \Omega^2(M)$  such that $\pi^{*}\psi = \psi'$. Tracing all definitions back, we see that acting with $(\psi,\rho,\varepsilon)$ on $\babla$ we obtain $\babla'$.

\kobib{../../bibliothek/tex}

\end{document}